\documentclass[leqno]{amsart}
\usepackage{amssymb}
\usepackage{amsmath, amsfonts, vmargin, enumerate}
\usepackage{color}
\usepackage{amsthm}
\usepackage[colorlinks=true,citecolor=blue,linkcolor=blue]{hyperref}

\newtheorem{thm}{Theorem}[section]
\newtheorem{prop}[thm]{Proposition}
\newtheorem{cor}[thm]{Corollary}
\newtheorem{lem}[thm]{Lemma}
\newtheorem{defi}[thm]{Definition}
\newtheorem{remark}[thm]{Remark}
\newtheorem{example}[thm]{Example}

\numberwithin{equation}{section}

\newcommand{\Bc}{\mathcal{B}}

\newcommand{\D}{{\mathcal{D}}}
\newcommand{\sym}{{\mathrm{sym}}}

\newcommand{\qt}{{\mathbb{T}_\theta^d}}

\newcommand{\ri}{{\mathrm{i}}}

\newcommand{\rTr}{{\mathrm{Tr}}}
\newcommand{\real}{{\mathbb{R}}}

\begin{document}
	
	\title{Endpoint Schatten class properties of commutators on quantum tori}

\thanks{{\em 2020 Mathematics Subject Classification:}
Primary 47L20, 35P20.
Secondary 46L51, 47G30, 58B34.}
	
	\thanks{{\em Key words:} Quantum tori, commutators, Schatten spaces, pseudodifferential operators, spectral asymptotics}

	

	


	\author{Xinyu ZHANG}
	
	\address{Institute for Advanced Study in Mathematics, Harbin Institute of Technology, Harbin 150001, China}
	\email{xy.zhang@stu.hit.edu.cn}

	\date{}

	\markboth{X. Zhang}%
	{Endpoint Schatten class properties of commutators}

	\begin{abstract}
		We study endpoint Schatten class properties of commutators
$[I_\theta^\epsilon,M_x]$ on quantum tori of dimension $d\geq2$,
where $I_\theta^\epsilon$
is the Riesz potential of order $\epsilon$
and $M_x$ denotes the left multiplication operator by
an operator $x$ on quantum tori $\mathbb T_\theta^d$.
For $-\frac{d}{2}<\epsilon<1$, $\epsilon\neq0$, and
$x\in L_1(\mathbb T_\theta^d)$, we prove that
$[I_\theta^\epsilon,M_x]\in
\mathcal L_{\frac{d}{1-\epsilon},\infty}$ if and only if
$x\in\dot W_{\frac{d}{1-\epsilon}}^1(\mathbb T_\theta^d)$. We also determine the singular value asymptotics of this commutator for
$x\in\dot W_{\frac{d}{1-\epsilon}}^1(\mathbb T_\theta^d)$.
The necessary Sobolev condition holds throughout
$1-d<\epsilon<1$, $\epsilon\neq0$.
For $\epsilon\leq-\frac{d}{2}$, we characterise
Hilbert--Schmidt membership, with a logarithmically
weighted Fourier condition at $\epsilon=-\frac{d}{2}$.

In order to get the singular value asymptotics of commutators, we prove singular value asymptotics for pseudodifferential
operators of arbitrary negative order on quantum tori
using $C^*$-algebraic principal symbols. This asymptotic result confirms Conjecture~7.1 of McDonald and Ponge
(\emph{Adv. Math.} 412 (2023), 108815).
	\end{abstract}
	\maketitle
	
\section{Introduction}

This paper is a continuation of \cite{SXZ2023}, where the singular
value asymptotics of quantised derivatives on quantum tori $\mathbb T_\theta^d$ were studied.
We are interested in the spectral properties of commutators
\[
 [I_\theta^{\epsilon},M_x]
 \quad\text{on }L_2(\mathbb T_\theta^d).
\]
Here $I_\theta^\epsilon=(-\Delta)^{\frac{\epsilon}{2}}$
denotes the Riesz potential of order $\epsilon$.
Throughout, $d\geq2$, $\theta$ is an arbitrary real antisymmetric
$d\times d$ matrix, and $M_x$ is the operator of left multiplication by an operator $x \in L_1(\mathbb T_\theta^d)$.
We use $-\Delta=\sum_{j=1}^dD_j^2$, with $D_jU^n=n_jU^n$, and focus on the range $1-d<\epsilon <1$.
Negative powers of $-\Delta$ are understood as generalised
inverses.
For $\epsilon\neq0$, we seek to characterise
membership of $[I_\theta^\epsilon,M_x]$ in the critical
weak Schatten class in terms of the regularity of $x$.

The relation between commutators and regularity has a long history.
For the Riesz transforms
$R_j=-\ri\partial_j(-\Delta)^{-\frac{1}{2}}$, $1\leq j\leq d$,
on $\mathbb R^d$, the classical results of Coifman, Rochberg
and Weiss \cite{CRW1976}, Janson \cite{Janson1978} and
Uchiyama \cite{Uchiyama1978} characterise boundedness and
compactness of $[R_j,M_f]$ by
$f\in\mathrm{BMO}(\mathbb R^d)$ and
$f\in\mathrm{VMO}(\mathbb R^d)$, respectively; here $\mathrm{BMO}$ denotes the class of functions of bounded mean oscillation and $\mathrm{VMO}$ denotes the closure of
$C_c^\infty(\mathbb R^d)$ in the $\mathrm{BMO}$ seminorm.
Janson and Wolff \cite{JW1982} characterised its Schatten class
membership for $p>d$ by fractional Sobolev regularity, whereas
membership in $\mathcal L_d$ forces $f$ to be constant.
At the weak endpoint, membership in $\mathcal L_{d,\infty}$ is equivalent
to $f\in\dot W_d^1(\mathbb R^d)$; see
\cite{CST1994,LMSZ2017,FSZ2023}.
For commutators $[(-\Delta)^{\frac{\epsilon}{2}},M_f]$, Janson and Peetre \cite{JP1988}
obtained analogous Schatten characterisations in fractional Sobolev
spaces above the critical exponent; at the critical exponent,
Frank, Sukochev and Zanin
\cite[Theorems~1.1 and~1.2]{FSZ2024} proved that, for $d\geq2$,
$-\frac{d}{2}<\epsilon<1$, $\epsilon\neq0$, and
$f\in L_{1,\mathrm{loc}}(\mathbb R^d)$,
\[
 [(-\Delta)^{\frac{\epsilon}{2}},M_f]
 \in\mathcal L_{\frac{d}{1-\epsilon},\infty}
 \quad\Longleftrightarrow\quad
 f\in\dot W_{\frac{d}{1-\epsilon}}^1(\mathbb R^d).
\]
In the noncommutative setting, McDonald, Sukochev and Xiong
\cite{MSX2019,MSX2020} characterised weak Schatten membership of
quantised derivatives on quantum tori and quantum Euclidean
spaces by first order Sobolev regularity. 
On quantum tori, the Riesz transforms are given by
$R_j=D_jI_\theta^{-1}$, $1\leq j\leq d$.
Theorems~1.1 and~1.4 of \cite{MSX2019} give the following
equivalent characterisation: for $x\in L_2(\qt)$,
\[
 x\in\dot W_d^1(\qt)
 \quad\Longleftrightarrow\quad
 [R_j,M_x]\in\mathcal L_{d,\infty}
 \quad\text{for every }j=1,\ldots,d.
\]

Spectral asymptotics give more precise information than ideal
membership or trace formulas alone.
Frank, Sukochev and Zanin \cite{FSZ2023} established singular
value asymptotics for Euclidean quantised derivatives associated
with $f\in\dot W_d^1(\mathbb R^d)$.
They also obtained singular value asymptotics for commutators $[(-\Delta)^{\frac{\epsilon}{2}},M_f]$ with
$f\in\dot W_{\frac{d}{1-\epsilon}}^1(\mathbb R^d)$ in
\cite{FSZ2024}.
On quantum tori, Sukochev, Xiong and Zanin \cite{SXZ2023}
proved the corresponding asymptotics for the quantised derivatives
of every $x\in\dot W_d^1(\qt)$, strengthening the noncommutative
integration results of \cite{MSX2019}. For $T\in\mathcal L_{p,\infty}$, $p>0$, the implication is
\[
 \lim_{t\to\infty}t^{\frac{1}{p}}\mu(t,T)=c
 \quad\Longrightarrow\quad \varphi(|T|^p)=c^p
\]
for every continuous normalised trace $\varphi$ on
$\mathcal L_{1,\infty}$, in particular for every Dixmier trace. Thus a singular value asymptotic formula yields
a Connes-type integration formula, while such a trace identity does
not in general recover the individual singular value asymptotics.
This is the relation between spectral asymptotics and Connes' trace theorem
\cite{Connes1988} developed in \cite{R2022,SZ2023,Ponge2023}.
Noncommutative Tauberian methods provide a further approach to
such asymptotics and their geometric applications \cite{MSZ2022}.

\medskip

In this paper, we pass from the quantised derivatives studied
in \cite{SXZ2023} to commutators
$[I_\theta^{\epsilon},M_x]$ on quantum tori.
Our main results give quantum torus counterparts of the
Sobolev characterisation and singular value asymptotics
in \cite{FSZ2024}.
However, the compactness properties differ in the lower-order range
$\epsilon\leq-\frac{d}{2}$.
For every $x\in\mathcal S(\qt)$, the commutator
$[I_\theta^{\epsilon},M_x]$ extends to a compact
operator on $L_2(\qt)$ for every $\epsilon<1$.
When $\epsilon<0$, compactness holds even for
$x\in L_\infty(\qt)$, since
$I_\theta^{\epsilon}$ is itself compact.
By contrast, the Euclidean Fourier multiplier
$|\xi|^\epsilon$ is unbounded near the origin.
For $-d<\epsilon\leq-\frac{d}{2}$, the commutator
$[(-\Delta)^{\frac{\epsilon}{2}},M_f]$ does not extend
to a bounded operator on $L_2(\mathbb R^d)$ for any
nonzero $f\in C_c^\infty(\mathbb R^d)$
\cite[Lemma~3.3]{FSZ2024}.
This difference already occurs when $\theta=0$.
After stating our main results, we discuss the range of
$\epsilon$ and explain these differences in more detail.

\subsection*{Main results}
To state our main results, we first fix some notation.
For $0<p<\infty$, we write $L_p(\qt)$ for the associated
noncommutative $L_p$-space, with
$\|x\|_p=\tau(|x|^p)^{\frac1p}$, see
\cite[Sections~1.1--1.2]{XXY2018}.
We denote the smooth algebra and its distribution space by
$\mathcal S(\qt)$ and $\mathcal S'(\qt)$, respectively;
see \cite[Section~2.1]{XXY2018}.
For $1\leq p<\infty$, the homogeneous Sobolev space
$\dot W_p^1(\qt)$ consists of distributions $x$ modulo constants such that
$D_jx\in L_p(\qt)$ for every $j=1,\ldots,d$, and is equipped
with the norm
$$
 \|x\|_{\dot W_p^1}
 =
 \Big(\sum_{j=1}^d\|D_jx\|_p^p\Big)^{\frac1p}.
$$
Further details are given in Section~\ref{subsec-nctori}
and \cite[Section~2.2]{XXY2018}.

	The first main result gives the upper estimate for the commutator $[I_\theta^{\epsilon},M_x]$.
\begin{thm}\label{mta}
		Let $d\geq 2$ and let $-\frac{d}{2}<\epsilon<1$. If $x\in\dot{W}^{1}_{\frac{d}{1-\epsilon}}(\qt)$, then $[I_\theta^{\epsilon},M_x]$ extends to a compact operator in $\mathcal{L}_{\frac{d}{1-\epsilon},\infty}$, with
		$$\Big\|[I_\theta^{\epsilon},M_x]\Big\|_{\frac{d}{1-\epsilon},\infty}\leq c_{d,\epsilon}\|x\|_{\dot{W}^{1}_{\frac{d}{1-\epsilon}}(\qt)}.$$
		\end{thm}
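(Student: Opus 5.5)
The plan is to reduce the commutator to a sum of products of the form (bounded operator)$\cdot M_{D_jx}\cdot$(a power of $-\Delta$). We then apply a Cwikel-type estimate on $\qt$, and use a double operator integral to control the non-commuting part. Throughout, write $p=\frac{d}{1-\epsilon}$; note $p>2$ exactly when $\epsilon>1-\frac d2$, and the range $-\frac d2<\epsilon<1$ gives $p\in(\frac{2d}{d+2}\cdot\frac{d}{d}, \infty)$, i.e. $p>\frac{2d}{d+2}$.

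First, by density (of $\mathcal S(\qt)$ in $\dot W^1_p$) and a Fatou-type argument for $\mathcal L_{p,\infty}$ quasinorms, it suffices to treat $x\in\mathcal S(\qt)$ with mean zero. Let $\Lambda=(-\Delta)^{1/2}$, which is diagonal in the basis $U^n$ with eigenvalue $|n|$. Because $D_j$ acts as a derivation, $[D_j,M_x]=M_{D_jx}$. The commutator $[\Lambda^\epsilon,M_x]$ has matrix entries $(|n|^\epsilon-|m|^\epsilon)\,\langle xU^m,U^n\rangle$ with respect to the basis. Now factor
\[
|n|^\epsilon-|m|^\epsilon=\sum_j \psi_j(n,m)\,|n|^{\alpha}(n_j-m_j)|m|^{\beta},\qquad \alpha+\beta=\epsilon-1,
\]
where $\psi_j$ is a symbol homogeneous of degree $0$ in $(n,m)$ away from the axes, of the type $\frac{(|n|^\epsilon-|m|^\epsilon)(n_j-m_j)}{|n-m|^2}|n|^{-\alpha}|m|^{-\beta}$. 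Since $n_j-m_j$ corresponds to $M_{D_jx}$, this gives
\[
[\Lambda^\epsilon,M_x]=\sum_j T_{\psi_j}\big(\Lambda^{\alpha}M_{D_jx}\Lambda^{\beta}\big),
\]
where $T_{\psi_j}$ is the Schur (double operator integral) multiplier acting on the spectral decomposition of $\Lambda$ on each side.

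Two estimates remain.

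\textbf{Schur multiplier bound.} One must show that $T_{\psi}$ is bounded on $\mathcal L_{p,\infty}$. For $p>1$ this follows by interpolation from boundedness on $\mathcal L_{q}$ for $q$ near $p$. That in turn comes from $\psi$ admitting an integral representation $\int a_s(n)b_s(m)\,d\nu(s)$ with $\sup|a_s|\sup|b_s|\in L_1(\nu)$, in the style of Potapov--Sukochev and the Euclidean argument of \cite{FSZ2024}. This representation is obtained by writing $\psi$ as a function of $\log|n|-\log|m|$ times smooth factors and Fourier transforming. For $p\le1$ (i.e. $\epsilon\le 1-d$, excluded here) this would fail, which is consistent with the stated range.

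\textbf{Cwikel estimate.} One must show
\[
\|\Lambda^{\alpha}M_y\Lambda^{\beta}\|_{p,\infty}\lesssim\|y\|_p,\qquad \alpha,\beta<0,\ \alpha+\beta=-\tfrac dp.
\]
For $p>2$ one takes $\alpha=\beta=-\frac{d}{2p}$, writes the operator as $(M_{|y|^{1/2}}\Lambda^{\beta})^*\cdot$(a unitary part)$\cdot M_{|y|^{1/2}}\Lambda^{\beta}$, and applies the quantum torus Cwikel estimate $\|M_z g(\Lambda)\|_{q,\infty}\lesssim\|z\|_q\|g\|_{\ell_{q,\infty}}$ for $q>2$ with Hölder. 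For $\frac{2d}{d+2}<p\le2$, which is the regime $\epsilon\le1-\frac d2$, one instead chooses $\alpha,\beta$ unbalanced. The constraint $\epsilon>-\frac d2$ is exactly what is needed to make both powers $\alpha,\beta$ negative with the mixed Hölder/Cwikel splitting available, using $\mathcal L_2$-type (Hilbert--Schmidt) bounds $\|M_z g(\Lambda)\|_2=\|z\|_2\|g\|_{\ell_2}$ in a Birman--Solomyak weak form.

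The main obstacle is the Schur multiplier step near the axes. There $\psi_j$ must be handled when $m=0$ or $n=0$ (generalised inverses), and when $\epsilon<0$, where $|m|^\epsilon$ blows up relative to $|n|^\epsilon$. I would first isolate the rank-one pieces from $n=0$ or $m=0$, which are trivially in every ideal and bounded by $\|x\|_1\lesssim\|x\|_{\dot W^1_p}$ via Poincaré. I would then split the remaining multiplier into the regions $|n|\le 2|m|\le 4|n|$, where $\psi$ is smooth and homogeneous, and $|n|\ll|m|$ (and its mirror), where one chooses $\alpha,\beta$ differently in each region so that the symbol decays and the Fourier integral representation converges.
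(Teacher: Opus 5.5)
\textbf{The multiplier step fails as proposed.} Your overall architecture matches the paper's: put $M_{D_jx}$ between negative powers, apply Cwikel, bound a multiplier, and treat $\epsilon<-1$ separately. The trouble is your symbol
\[
 \psi_j(n,m)=\frac{(|n|^\epsilon-|m|^\epsilon)(n_j-m_j)}{|n-m|^2}\,|n|^{-\alpha}|m|^{-\beta}.
\]
It is not a function of $(|n|,|m|)$, so it is not a double operator integral with respect to $\Lambda$. It depends on the direction of $n-m$ and on the angle between $n$ and $m$, so it cannot be written as a function of $\log|n|-\log|m|$ times separable factors.

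Worse, no representation $\int a_s(n)b_s(m)\,d\nu(s)$ with $\|a_s\|_\infty\|b_s\|_\infty\in L_1(\nu)$ can exist, because such a representation would make $\psi_j$ bounded on $\mathcal L_\infty$. To see the obstruction, take $n,m$ in a cube of side $R$ around $Ne_1$ with $N\gg R$, and put $k=n-m$. Since $\alpha+\beta=\epsilon-1$,
\[
 \psi_j(n,m)=\epsilon\,\frac{k_1k_j}{|k|^2}+O(R^2/N).
\]
Let $N\to\infty$ and then $R\to\infty$. Boundedness on $\mathcal L_\infty$ would make $k\mapsto k_1k_j/|k|^2$ a Herz--Schur multiplier of $\mathbb Z^d$, hence the Fourier transform of a finite measure on $\mathbb T^d$. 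That would make the double Riesz transform $R_1R_j$ bounded on $L_1(\mathbb T^d)$, which is false. On $\mathcal L_q$ with $1<q<\infty$, $\psi_j$ may still be bounded, but only through Riesz-transform/UMD-type arguments plus control of the mixed angular dependence, and you supply neither. The real obstruction sits on the diagonal $n=m$, not near the axes.

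\textbf{The missing idea is the Dirac operator.} The paper writes $1\otimes[I_\theta^\epsilon,M_x]=[|\D|^\epsilon,1\otimes M_x]$ and uses $[\D,1\otimes M_x]=\sum_j\gamma_j\otimes M_{D_jx}$. This absorbs the vector structure into one operator. The commutator is then a double operator integral with respect to the single selfadjoint operator $\D$, with a symbol in two real variables. For $-1\le\epsilon<1$ the weighted divided difference factors as
\[
\Psi_\epsilon(\lambda,\mu)=\Phi_\epsilon(|\lambda|,|\mu|)\,\psi^{[1]}(\lambda,\mu).
\]
Your log-ratio Fourier argument does apply to $\Phi_\epsilon$. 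The singular part is $\psi^{[1]}$, the divided difference of $|\cdot|$, which is bounded on $\mathcal L_{p,\infty}$ for $1<p<\infty$ by Potapov--Sukochev but not on $\mathcal L_\infty$.

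If you want to stay scalar, drop the division by $|n-m|^2$ and use $|n|-|m|=\sum_k\frac{n_k+m_k}{|n|+|m|}(n_k-m_k)$ instead. Writing $\frac{n_k+m_k}{|n|+|m|}=\frac{n_k}{|n|}\frac{|n|}{|n|+|m|}+\frac{|m|}{|n|+|m|}\frac{m_k}{|m|}$ leaves only functions of $|n|/|m|$ and one-sided multiplication by $D_kI_\theta^{-1}$.

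For $\epsilon<-1$ the balanced multiplier is unbounded. The paper replaces $\epsilon$ by $\delta\in[-1,1)$ and splits off $Y_\epsilon$, whose exponent pairs are $(\epsilon+l-1,-l)$. This is close to your regional choice of $\alpha,\beta$, and it is exactly where $\epsilon>-\frac d2$ is used.

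Finally, your Cwikel step needs no Hilbert--Schmidt regime. With balanced exponents each factor lies in $\mathcal L_{2p,\infty}$, so the $q>2$ Cwikel estimate already covers every $p>1$.
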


The second main result determines the singular value asymptotics of the same commutator.

\begin{thm}\label{mtb}
		Let $d\geq 2$ and let $-\frac{d}{2}<\epsilon<1$.  If $x\in\dot{W}^{1}_{\frac{d}{1-\epsilon}}(\qt)$, then
		\begin{equation}\label{mtb-eq}
			 \lim_{t\to\infty}t^{\frac{1-\epsilon}{d}}\mu\Big(t,[I_\theta^{\epsilon},M_x]\Big)=  |\epsilon| \, d^{ \frac{\epsilon-1}{d}}    \|  \sum_{1\leq j \leq d }D_jx\otimes s_j \|_{L_{\frac{d}{1-\epsilon}}(L_{\infty}(\mathbb{T}_\theta^d)\bar{\otimes} L_\infty(\mathbb{S}^{d-1}))}.
		\end{equation}
Here $s_j$ is the $j$-th coordinate function on $\mathbb S^{d-1}$,
and the product trace uses the usual unnormalised surface measure.
Moreover, the norm on the right-hand side of \eqref{mtb-eq} is
equivalent to the homogeneous Sobolev norm of $x$. 
	\end{thm}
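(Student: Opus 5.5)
The plan is to reduce \eqref{mtb-eq} to singular value asymptotics for a pseudodifferential operator of negative order with a $C^*$-algebraic principal symbol, combined with a density argument that relies on Theorem~\ref{mta}.

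\textbf{Step 1: smooth $x$.} For $x\in\mathcal S(\qt)$, expand the commutator via the symbolic calculus on $\qt$:
\[
[I_\theta^\epsilon,M_x]=\sum_{j=1}^d M_{D_jx}\,\epsilon D_j I_\theta^{\epsilon-2}+R_x ,
\]
where $R_x$ is a pseudodifferential operator of order $\epsilon-2$. It is therefore in $\mathcal L_{\frac{d}{2-\epsilon},\infty}\subset(\mathcal L_{\frac{d}{1-\epsilon},\infty})_0$ and does not affect the asymptotics. The leading term is $T_x=\epsilon\sum_j M_{D_jx}D_jI_\theta^{-1}\cdot I_\theta^{\epsilon-1}$, an operator of order $\epsilon-1<0$. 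Its principal symbol, viewed in $L_\infty(\qt)\bar\otimes L_\infty(\mathbb S^{d-1})$ (or its $C^*$ version $C(\qt)\otimes C(\mathbb S^{d-1})$), is $\epsilon\sum_j D_jx\otimes s_j$.

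\textbf{Step 2: asymptotics for negative order.} The key ingredient is the result (the McDonald--Ponge conjecture) that for an order $-\alpha<0$ operator of the form $T=\sum M_{a_k}g_k(\nabla/|\nabla|)I_\theta^{-\alpha}$ with principal symbol $\sigma$,
\[
\lim_{t\to\infty} t^{\alpha/d}\mu(t,T)=d^{-\alpha/d}\|\sigma\|_{L_{d/\alpha}(L_\infty(\qt)\bar\otimes L_\infty(\mathbb S^{d-1}))}.
\]
To prove it, I would compute $\mathrm{Tr}$-type asymptotics of $|T|^{d/\alpha}$. The route is to show that $T^*T$ differs, modulo $(\mathcal L_{\frac{d}{2\alpha},\infty})_0$, from $M_{\cdot}$ applied to the symbol $|\sigma|^2$ times $I_\theta^{-2\alpha}$. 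One then uses the Birman--Solomyak-type statement that $\mu$-asymptotics are stable under $(\cdot)_0$ perturbations, together with a Weyl law for $\sqrt{b}\,I_\theta^{-2\alpha}\sqrt b$. That Weyl law reduces, via lattice-point counting $\#\{n:|n|\le r\}\sim \frac{|\mathbb S^{d-1}|}{d}r^d$, to $\tau\otimes\int_{\mathbb S^{d-1}}$. The constant $d^{-\alpha/d}$ arises from exactly this counting. Setting $\alpha=1-\epsilon$ and $\sigma=\epsilon\sum_j D_jx\otimes s_j$ yields \eqref{mtb-eq} for smooth $x$.

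\textbf{Step 3: density.} For general $x\in\dot W^1_{\frac d{1-\epsilon}}$, approximate by Fejér means $x_N\in\mathcal S(\qt)$ with $D_jx_N\to D_jx$ in $L_{\frac d{1-\epsilon}}$. Theorem~\ref{mta} bounds $\|[I_\theta^\epsilon,M_{x-x_N}]\|_{\frac d{1-\epsilon},\infty}$ by $c\|x-x_N\|_{\dot W^1}\to0$. The right-hand side of \eqref{mtb-eq} is continuous in $x$ in the same norm. A standard lemma states that the functionals $\limsup$ and $\liminf$ of $t^{1/p}\mu(t,\cdot)$ are continuous on $\mathcal L_{p,\infty}$ (via $\mu(t+s,A+B)\le\mu(t,A)+\mu(s,B)$). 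With it, the limit passes to $x$.

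\textbf{Step 4: norm equivalence.} The upper bound $\|\sum_j D_jx\otimes s_j\|\le\sum_j\|D_jx\|_p|\mathbb S^{d-1}|^{1/p}$ follows from the triangle inequality. For the lower bound, I would average over rotations, or use the unitaries $U^n$ and the transference $\qt\to L_\infty(\mathbb T^d)\bar\otimes L_\infty(\qt)$ (the map $\sigma_z$). Fixing a direction $s$ near $e_j$ and using conditional expectations onto functions of $s_j$ gives $\|D_jx\|_p\lesssim\|\sum_k D_kx\otimes s_k\|_p$. Concretely, multiplying by a bounded function of $s$ and applying the contractive expectation $L_p(\qt\bar\otimes L_\infty(\mathbb S^{d-1}))\to L_p(\qt)$, $f\mapsto\int f(s)\,s_j\,ds$, yields $c_d D_jx$ with $c_d=\int s_j^2>0$. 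This settles the lower bound immediately.

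The main obstacle is Step 2, namely the $T^*T$ reduction with noncommuting symbols. The commutators between $M_{a}$ and $g(\nabla/|\nabla|)I_\theta^{-\alpha}$ must be shown to lie in the smaller ideal. I would first prove this for $a\in\mathcal S(\qt)$ using order-lowering of commutators, and then extend by density in $C^*$-norm, using Cwikel-type estimates $\|M_aI_\theta^{-\alpha}\|_{\frac d\alpha,\infty}\lesssim\|a\|_\infty$.
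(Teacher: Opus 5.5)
Your architecture matches the paper's: a leading term $\epsilon TJ_\theta^{\epsilon-1}$ modulo $(\mathcal L_{\frac{d}{1-\epsilon},\infty})_0$, a Weyl law for negative-order operators with symbols in $\Pi$, density via Theorem~\ref{mta} and Lemma~\ref{bs limit lemma}, and the norm equivalence.

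Steps~1, 3 and~4 are sound. In Step~1, getting the leading term from the composition formula of \cite{HLP2019II} is a legitimate substitute for the paper's resolvent-integral proof of Theorem~\ref{approximation theorem}; note that $I_\theta^\epsilon=P_{\chi|\xi|^\epsilon}$ is classical. In Step~4, your map $f\mapsto\int_{\mathbb S^{d-1}}f(s)s_j\,ds$ is bounded by H\"older with norm $\|s_j\|_{L_{p'}(\mathbb S^{d-1})}$, though not contractive. It gives a cleaner lower bound than the cone argument in Lemma~\ref{mtc}.

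The gap is Step~2, which carries essentially all the difficulty.

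\emph{Your reduction is circular.} The symbol $|\sigma|^2=\epsilon^2\sum_{j,k}(D_jx)^*D_kx\otimes s_js_k$ depends on the direction. So ``$M_\cdot$ applied to $|\sigma|^2$'' can only mean a positive $B\in\Pi$ with $\sym(B)=|\sigma|^2$, e.g.\ $B=T_0^*T_0$ where $T=T_0I_\theta^{-\alpha}$. Your reduction $T^*T\equiv BI_\theta^{-2\alpha}$ then replaces the problem for $(T_0,\alpha)$ by the same problem for $(B,2\alpha)$. This is exactly the paper's Lemma~\ref{xiao step 2 lemma}: it relates orders $\alpha$ and $2\alpha$ but provides no base case.

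\emph{Lattice-point counting does not supply a base case.} It computes the spectrum only of Fourier multipliers such as $\pi_2(g)I_\theta^{-2\alpha}$. Once the symbol depends on $C(\qt)$ in a way that does not commute with the directional part, there is no localisation or bracketing on $\qt$. Nothing then reduces the eigenvalue count of $B^{1/2}I_\theta^{-2\alpha}B^{1/2}$ to counting lattice points.

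The commutator estimates you single out as the main obstacle are in fact the easy part (Corollary~\ref{commutator-PSD-dp}).

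The paper does not prove this Weyl law from scratch. Its base case, order $-1$ for arbitrary $T\in\Pi$, is imported from \cite[Theorem~1.1]{SXZ2023}, a substantial theorem in its own right. To reach non-dyadic orders it uses Theorem~\ref{FSZ separable part thm} from \cite{FSZ2024} with $B=J_\theta^{-1}$ and $A=T$. This gives $J_\theta^{-p}T^p-(T^{\frac12}J_\theta^{-1}T^{\frac12})^p\in(\mathcal L_{\frac dp,\infty})_0$ for $1<p<d$. Only afterwards does it apply the doubling lemma.

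Even granting the order $-1$ result, your squaring trick alone reaches only orders $-2^k$. You would still need a power-changing step of this kind to handle $\alpha=1-\epsilon$. As written, Step~2, and with it the asymptotic formula even for smooth $x$, is unproved.
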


The third main result gives the lower estimate for the commutator $[I_\theta^{\epsilon},M_x]$ that holds over the extended range $1-d < \epsilon < 1$, $\epsilon \neq 0$.

\begin{thm}\label{mtd}
	Let $d\geq2$, $1-d<\epsilon<1$ with $\epsilon\neq0$. If $x\in L_{1}(\qt)$ and if 	$[I_\theta^{\epsilon},M_x]\in\mathcal{L}_{\frac{d}{1-\epsilon},\infty},$ then $x\in\dot{W}^{1}_{\frac{d}{1-\epsilon}}(\qt)$ and
	$$\|x\|_{\dot{W}^{1}_{\frac{d}{1-\epsilon}}(\qt)}\leq c'_{d,\epsilon}\Big\|[I_\theta^{\epsilon},M_x]\Big\|_{\frac{d}{1-\epsilon},\infty}.$$
\end{thm}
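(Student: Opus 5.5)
Let $p=\frac{d}{1-\epsilon}$ and $T=[I_\theta^{\epsilon},M_x]$. The plan is to convert the weak Schatten bound on $T$ into bounds on the Fourier coefficients of the derivatives $D_jx$, using smooth test data and duality. The argument proceeds in three steps.

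\textbf{Step 1 (regularisation).} I first regularise $x$ by a Fejér/Poisson-type approximation. Set $x_N=F_N*x$, which is Fourier multiplication by a positive-definite kernel. Since translations $\alpha_s$ act on $\qt$ by trace-preserving automorphisms and commute with $I_\theta^\epsilon$, we have $[I_\theta^\epsilon,M_{\alpha_s(x)}]=V_s[I_\theta^\epsilon,M_x]V_s^*$ for a unitary $V_s$. Averaging over $s$ then gives
\[
\|[I_\theta^\epsilon,M_{x_N}]\|_{p,\infty}\le c_p\|T\|_{p,\infty}.
\]
Here $p>1$ because $\epsilon>1-d$, so $\mathcal L_{p,\infty}$ is normable and the Bochner average is legitimate. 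It therefore suffices to prove the bound for $x\in\mathcal S(\qt)$ with a constant independent of $x$. The conclusion for general $x$ then follows by weak-$*$ compactness: $D_jx_N$ is bounded in $L_p$ and converges to $D_jx$ distributionally.

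\textbf{Step 2 (symbol of the commutator).} For smooth $x$, I use the pseudodifferential calculus on $\qt$ together with the $C^*$-algebraic principal symbol, which the abstract says is developed in the paper. This shows
\[
T=\epsilon\sum_j M_{D_jx}\,D_j(-\Delta)^{\frac{\epsilon-2}{2}}+R,
\]
where $R$ has order $\epsilon-2$. Since $\epsilon-2<-d/p$, the remainder $R$ lies in $\mathcal L_{q,\infty}$ with $q<p$. However, the bound on $R$ depends on higher norms of $x$, so Step 2 alone cannot give a uniform estimate. What is needed is a lower bound that does not pass through the asymptotics.

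\textbf{Step 3 (uniform lower bound by duality).} Instead I argue by duality. For $y\in L_{p'}(\qt)$ with $\|y\|_{p'}\le1$ and a fixed $j$, I consider the operator
\[
S=M_y\,D_j(-\Delta)^{-\frac{1+\epsilon}{2}}P_{\ge1}\in\mathcal L_{p',1}\ \text{-type class}.
\]
More precisely, by a Cwikel-type estimate on $\qt$ (in the spirit of \cite{MSX2019}, valid for $p'\ge2$ or via interpolation), $\|S\|_{p',1}$ is controlled by $\|y\|_{p'}$ up to a logarithm. The logarithm is avoided by using $\mathcal L_{p',1}$ versus $\mathcal L_{p',\infty}$ with the Cwikel estimate for weak norms plus a dyadic decomposition in frequency. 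We then have $|\rTr(TS)|\le\|T\|_{p,\infty}\|S\|_{p',1}$. The trace $\rTr(TS)$ is computed explicitly in Fourier coefficients:
\[
\rTr(TS)=\sum_{m,n}\hat x(m)\hat y(-m)\big(|n+m|^\epsilon-|n|^\epsilon\big)n_j|n|^{-1-\epsilon}\times(\text{twisted phase}).
\]
The $\theta$-phases cancel because we are taking a trace of a product of a left multiplication with a Fourier multiplier twice. The inner sum over $n$ behaves like $c\,\epsilon\, m_j$ times a divergent lattice sum, so I would truncate with $P_{\le K}$ and normalise by $\log K$. This yields, in the limit and using a Dixmier-type normalisation, the pairing $c_{d,\epsilon}\,\tau((D_jx)y)$. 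Taking the supremum over $y$ gives $\|D_jx\|_p\le c'\|T\|_{p,\infty}$.

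The main obstacle is this last step: evaluating the lattice sum asymptotics uniformly in $m$, and justifying the trace for $\epsilon$ in the full range $1-d<\epsilon<1$, where the multiplier difference $|n+m|^\epsilon-|n|^\epsilon$ is not small relative to $|n|^\epsilon$ for small $|n|$. The finitely many $n$ with $|n|\lesssim|m|$ contribute bounded terms, which disappear after dividing by $\log K$, so they should be harmless. The truly delicate point is uniform control of the truncation error for non-smooth dependence.
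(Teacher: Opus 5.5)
Your Step 1 is the paper's regularisation: averaging over the dual action with Fej\'er kernels, using $p=\frac{d}{1-\epsilon}>1$, then weak compactness in $L_p(\qt)$. The gap is in what follows. You discard Step 2 for a wrong reason, and Step 3 does not work as written.

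\textbf{Step 2 already gives the uniform bound.} The size of the remainder $R$ is irrelevant. By Theorem~\ref{approximation theorem}, $[I_\theta^\epsilon,M_x]$ differs from its leading term by an element of $(\mathcal L_{p,\infty})_0$. So by Lemma~\ref{weak weyl} the remainder does not affect $\lim_{t\to\infty}t^{1/p}\mu(t,\cdot)$, however large its quasi-norm. For $x\in\mathcal S(\qt)$, in particular for $\psi_N*x$, Theorem~\ref{thm:asymp-smooth} identifies this limit exactly as $|\epsilon|d^{-1/p}\|\sum_jD_jx\otimes s_j\|_{L_p}$. Trivially, $\limsup_{t\to\infty}t^{1/p}\mu(t,A)\le\|A\|_{p,\infty}$. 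Combining this with Lemma~\ref{mtc} and your averaging bound gives $\|\psi_N*x\|_{\dot W^1_p}\le c_{d,\epsilon}\|[I_\theta^\epsilon,M_x]\|_{p,\infty}$ uniformly in $N$. That is the paper's entire proof. The missing idea is that the asymptotics do give a uniform lower bound. This also removes your worry about uniform control of truncation errors: uniformity is needed only in the upper bound, and the quasi-norm inequality supplies it for free.

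\textbf{Step 3 has concrete errors.}
\begin{enumerate}[(a)]
\item The operator $S=M_yD_j(-\Delta)^{-\frac{1+\epsilon}{2}}$ has order $-\epsilon$, not the order $-\frac{d}{p'}=1-d-\epsilon$ needed to pair with $\mathcal L_{p,\infty}$. For $\epsilon<0$ it is unbounded. Also $TS$ has order $-1$, so your lattice sum grows like $K^{d-1}$, not like $\log K$.
\item Even with the corrected order, already for $y=1$ the singular values of $S$ are $\asymp k^{-1/p'}$. Hence $S\notin\mathcal L_{p',1}$, and $|\rTr(TS)|\le\|T\|_{p,\infty}\|S\|_{p',1}$ is unavailable. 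One only has $TS\in\mathcal L_{1,\infty}$ and $|\rTr(TSP_{\le K})|\le\|TS\|_{1,\infty}(1+\log\operatorname{rank}P_{\le K})$. That is where a $\log K$ normalisation can legitimately come from.
\item The estimate $\|M_yg(\nabla)\|_{p',\infty}\lesssim\|y\|_{p'}$ is false when $p'<2$, which happens for every $\epsilon>1-\frac d2$. Indeed, if $y\notin L_2(\qt)$ then $M_yg(\nabla)U^n=g(n)yU^n\notin L_2(\qt)$. You would need a two-sided factorisation $g_1(\nabla)M_yg_2(\nabla)$ as in Lemma~\ref{cwikel}.
\end{enumerate}
A Dixmier-trace duality argument repaired along these lines is plausible. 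But your proposal does not carry it out, and it would only reproduce by hand what Theorem~\ref{thm:asymp-smooth} already provides.
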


\subsection*{Discussion on the range}

\begin{enumerate}[(i)]
\item
For $d\geq2$, $-\frac{d}{2}<\epsilon<1$ with $\epsilon\neq0$,
Theorems~\ref{mta} and~\ref{mtd} give
\[
 [I_\theta^{\epsilon},M_x]
 \in\mathcal L_{\frac{d}{1-\epsilon},\infty}
 \quad\Longleftrightarrow\quad
 x\in\dot W_{\frac{d}{1-\epsilon}}^1(\qt),
 \qquad x\in L_1(\qt).
\]
Theorem~\ref{mtb} gives the singular value asymptotics
under the same Sobolev assumption.

\item
For $d\geq2$, $\epsilon\leq-\frac{d}{2}$ and $x\in L_1(\qt)$,
Proposition~\ref{prop:HS} gives
\[
 \big\|[I_\theta^{\epsilon},M_x]\big\|_{\mathcal L_2}
 \approx
 \begin{cases}
  \|x-\tau(x)\|_{L_2(\qt)},
      &\epsilon<-\frac{d}{2},\\[1mm]
  \displaystyle
  \Bigg(\sum_{k\ne0}
     \log(1+|k|)|\widehat x(k)|^2\Bigg)^{\frac{1}{2}},
      &\epsilon=-\frac{d}{2}.
 \end{cases}
\]
The constants depend only on $d$ and $\epsilon$.
The commutator is Hilbert--Schmidt if and only if
the right-hand side is finite.

\item
For $d\geq3$ and $1-d<\epsilon\leq-\frac{d}{2}$,
there are elements
$x\in\dot W_{\frac{d}{1-\epsilon}}^1(\mathbb T^d)$
whose commutators have no Hilbert--Schmidt extension.
Thus the conclusions of Theorems~\ref{mta} and~\ref{mtb}
cannot be extended to all elements of this Sobolev space,
even when $\theta=0$; see
Example~\ref{ex:lower-order}.

\item
For $d\geq3$ and $1-d<\epsilon\leq-\frac{d}{2}$,
Theorem~\ref{mtd} still gives
\[
 [I_\theta^{\epsilon},M_x]
 \in\mathcal L_{\frac{d}{1-\epsilon},\infty}
 \quad\Longrightarrow\quad
 x\in\dot W_{\frac{d}{1-\epsilon}}^1(\qt),
 \qquad x\in L_1(\qt).
\]
Since
\[
 1<\frac{d}{1-\epsilon}<2,
 \qquad
 \mathcal L_{\frac{d}{1-\epsilon},\infty}
 \subset\mathcal L_2,
\]
the Hilbert--Schmidt condition in~(ii) is also necessary;
see Corollary~\ref{cor:lower-range-necessary-conditions}.
\end{enumerate}

The critical exponent $\frac{d}{1-\epsilon}$ does not depend on $\theta$.

\subsection*{Pseudodifferential operators and spectral asymptotics}

In the commutative setting, pseudodifferential operators were first explicitly defined by Kohn-Nirenberg \cite{KN1965} and H\"ormander \cite{Hor1965} to connect singular integrals and differential operators, see also \cite{Stein1993}. In the noncommutative setting, the research of pseudodifferential operators started with Connes and Baaj's work \cite{Connes1980, Baaj1988} for $C^*$-dynamical systems; see \cite{Tao2018,HLP2019I,HLP2019II} for recent progress. ``Classical pseudodifferential operators" have been defined in the above mentioned literature as the class of pseudodifferential operators whose symbols admit asymptotic expansions by homogeneous functions on $\mathbb{R}^d$. In addition to the traditional smooth classical calculus, we use the $C^*$-algebraic approach introduced in \cite{SZ2018, MSZ2019}. Following \cite{SZ2018,MSZ2019}, we work with the
$C^*$-algebra $\Pi(C(\mathbb T_\theta^d),C(\mathbb S^{d-1}))$ (see Section \ref{subsec:Cstar-PDO} below for specific definition),
which contains the traditional classical pseudodifferential
operators of order zero and admits continuous principal symbols.
	
	Spectral asymptotics of classical pseudodifferential operators have been known for some time, see for example the work of Birman and Solomyak \cite{BS1970, BS1972}.
In the noncommutative setting developed in this paper, spectral analysis of the leading symbol relies on the $C^*$-algebraic principal symbol framework introduced in \cite{SZ2018, MSZ2019}. In Section~\ref{sec:PDO-Weyl}, we extend
the asymptotic formula for $TI_\theta^{-1}$ in
\cite[Theorem~1.1]{SXZ2023} to
$TJ_\theta^{-m}$ for every $m>0$ and
$T\in\Pi(C(\qt),C(\mathbb S^{d-1}))$.
The case $m=1-\epsilon$ gives the formula needed for
the leading term of $[I_\theta^\epsilon,M_x]$ in
Theorem~\ref{approximation theorem}. We also prove the positive and negative eigenvalue asymptotics.
These prove Conjecture~7.1 of McDonald and Ponge
\cite{MP2023} for selfadjoint classical
pseudodifferential operators of negative order, without an
ellipticity assumption; see
Corollary~\ref{cor:McDonald-Ponge-conjecture}.

\subsection*{Organisation}

Section~\ref{sec-pre} recalls the basic notation for quantum
tori and collects preliminaries on operator ideals,
double operator integrals, $C^*$-algebraic pseudodifferential
operators, and commutator estimates.
Section~\ref{sec:boundedness-HS} defines the distributional
commutator and establishes its preliminary boundedness properties
and Hilbert--Schmidt criteria. Section~4 proves
Theorem~\ref{mta}. Section~\ref{sec:PDO-Weyl} develops the
spectral asymptotics used in the commutator argument and treats
the classical pseudodifferential application.
Section~6 identifies the leading term of the commutator modulo
a smaller weak Schatten ideal. Section~7 contains the proofs
of Theorems~\ref{mtb} and \ref{mtd}.

	\section{Preliminaries}\label{sec-pre}
	
\subsection{Quantum tori}\label{subsec-nctori}

We recall some basic facts about quantum tori, following
\cite[Section~1.2]{XXY2018}.  We fix an integer $d\geq2$ and $\theta = \{\theta_{j,k}\}_{j,k = 1}^d$, a $d\times d$ antisymmetric real matrix. The $C^*$-algebra of continuous functions on the noncommutative torus, denoted $C(\qt)$, is the universal $C^*$-algebra on $d$ unitary generators $U_1,\cdots,U_d$
	which satisfy:
	\begin{equation*}
		U_jU_k = e^{{\rm i}\theta_{j,k}}U_kU_j,\quad 1\leq j,k\leq d.
	\end{equation*}
	Given $n = (n_1,\cdots,n_d) \in \mathbb{Z}^d$, we adopt the shorthand notation:
	\begin{equation*}
		U^n := U_1^{n_1}U_2^{n_2}\cdots U_d^{n_d}.
	\end{equation*}
	A polynomial in $U$ is a finite sum
	$$x   =   \sum_{m  \in \mathbb{Z}^d}    \alpha_m  U^m    \;\;\mbox{with } \;\; \alpha _m  \in \mathbb{C} . $$
	The involutive algebra of all such polynomials is dense in $C(\qt)$. For any polynomial $x$ as above, we define $\tau(x) = \alpha _0$. Then $\tau $ extends to a faithful tracial state on $C(\qt)$, still denoted as $\tau $. We now define the GNS Hilbert space $L_2(C(\qt),\tau)$, which we denote $L_2(\qt)$, and we represent $C(\qt)$ as an algebra of bounded
	operators on $L_2(\qt)$, where $x \in C(\qt)$ acts on $\xi \in L_2(\qt)$ by left multiplication. Taking the weak operator topology closure $C(\qt)''$ in $\Bc(L_2(\qt))$ yields a von Neumann algebra, which we denote $L_\infty(\qt)$.
	
	The $L_p$-spaces for $p \in [1,\infty)$ on $\qt$ are then defined as the noncommutative $L_p$-spaces \cite{LSZ2012} (see also \cite[Section~6.2]{DPS2023}) on $(L_\infty(\qt),\tau)$.
	For $x \in L_1(\qt)$ and $n\in \mathbb{Z}^d$, we define:
	\begin{equation*}
		\widehat{x}(n) = \tau(x(U^{n})^{*}).
	\end{equation*}
	In particular, this definition applies to $x\in L_2(\qt)\subset L_1(\qt).$
	By the definition of $\tau$, we see that $\tau(U^n) = \delta_{n,0}$, and then standard Hilbert space arguments show that any $x \in L_2(\qt)$ can be written as an $L_2$-convergent
	series:
	\begin{equation*}
		x = \sum_{n \in \mathbb{Z}^d} \widehat{x}(n)U^n,
	\end{equation*}
	with
	\begin{equation}\label{Plancherel-qt}
		\|x\|_2^2 = \sum_{n\in \mathbb{Z}^d} | \widehat x(n) |^2 .
	\end{equation}

	The space $\mathcal{S}(\qt)$ is defined to be the subset of $x\in C(\qt)$ such that the sequence of Fourier coefficients $\{\widehat{x}(n)\}_{n\in \mathbb{Z}^d}$
	has rapid decay (i.e., the sequence $\{|\widehat{x}(n)|\}_{n\in \mathbb{Z}^d}$ is eventually dominated by the reciprocal of any polynomial). We may consider $\mathcal{S}(\qt)$
	as the analogue of the space of $C^\infty$ functions on $\mathbb{T}^d$. There is also a canonical Fr\'echet
	topology on $\mathcal{S}(\qt)$, and the space $\mathcal{S}'(\qt)$, called the space of distributions on $\qt$, is defined to be the topological dual of $\mathcal{S}(\qt)$. See \cite[Section~2.1]{XXY2018} for more details.

	The partial differentiation operators $D_j,$ $j= 1,\cdots,d$ on quantum tori are defined by:
	\begin{equation*}
		D_j (U^n) = n_jU^n,\quad n = (n_1,\cdots,n_d) \in \mathbb{Z}^d.
	\end{equation*}
	Every partial derivation $D_j$ can be viewed as an (unbounded) self-adjoint   operator on $L_2(\qt).$ Let $\Delta=-\sum_{j=1}^dD_j^2$ be the Laplacian. The operator $-\Delta$ is a positive and self-adjoint operator on $L_2(\qt)$ with spectrum equal to $\{|n|^2:\ n \in \mathbb{Z}^d\}.$
	
	Given $n =(n_1,\cdots,n_d)\in \mathbb{N}_0^d$ ($\mathbb{N}_0$ denoting the set of nonnegative integers), the associated partial derivation $D^n$ is defined to be
	$D_1^{n_1}\cdots D_d^{n_d}$. The order of $D^n$ is   $|n|_1=n_1+\cdots+ n_d$.
	By duality, the derivations transfer to  $\mathcal{S}'(\qt)$  as well.
		The Sobolev space of order $k$  on $\mathbb{T}_{\theta}^d$ is defined to be
	$$W_p^k(\mathbb{T}_{\theta}^d)= \big\{ x\in\mathcal{S}'(\qt) :  D^n x \in L_p(\mathbb{T}_{\theta}^d) \textrm{ for each }n\in \mathbb {N}_0^d \textrm{ with } |n |_1\leq k \big\},$$
	equipped with the  norm
	$$\|x\|_{W_p^k}=\Big(\sum_{0\leq |n |_1\leq k}\|D^n x\|_{p}^p\Big)^{\frac{1}{p}}.$$
	In this paper, we will mainly use the ``homogeneous" Sobolev space $\dot{W}^1_p (\qt)$, the norm of which may be described in the following equivalent forms:
	\begin{equation}\label{Sob-equi-norm}
		\|x\|_{\dot{W}^1_p}=\Big(\sum_{j=1}^d \|D_j x\|_p^p \Big)^{\frac{1}{p}}.
	\end{equation}

For $1\leq p \leq\infty$, the
Poincar\'e inequality \cite[Theorem~2.12]{XXY2018} gives
\begin{equation}\label{eq:distribution-Poincare}
 x-\widehat x(0)\in L_p(\qt),\qquad
 \|x-\widehat x(0)\|_p
 \leq C_{d,p}\|x\|_{\dot W_p^1},
 \qquad x\in\dot W_p^1(\qt).
\end{equation}

	Fourier multipliers for quantum tori were studied in detail in \cite[Chapter 7]{XXY2018}. Let $g$ be a bounded scalar function on $\mathbb{Z}^d$. For $x \in L_2(\qt)$, the Fourier multiplier $T_g$ with symbol $g$ is defined on $x$ by:
	\begin{equation}\label{def-Fourier-Z}
		T_gx = \sum_{n \in \mathbb{Z}^d} g(n)\widehat{x}(n)U^n.
	\end{equation}
	By virtue of the Plancherel identity \eqref{Plancherel-qt}, $T_g$ indeed defines a bounded linear operator on $L_2(\qt)$ and the above series converges in the $L_2$-sense.
	
The operators $D_j$ and $\Delta$ are Fourier multipliers with
symbols $n_j$ and $-|n|^2$, respectively. Let $P_0$ be the
orthogonal projection of $L_2(\qt)$ onto $\mathbb C1$.
For $\epsilon\in\mathbb R$, we denote the Riesz and Bessel
potentials, respectively, by
\[
 I_\theta^\epsilon=(-\Delta)^{\frac{\epsilon}{2}},
 \qquad
 J_\theta^\epsilon=(1-\Delta)^{\frac{\epsilon}{2}}.
\]
As in \cite[p.~16]{SXZ2023}, negative powers of $-\Delta$
are defined by spectral calculus on
$(1-P_0)L_2(\qt)$ and extended by zero on $\mathbb C1$.
At order zero, we take $I_\theta^0=1$. Thus
\begin{equation}\label{eq:a-epsilon}
 I_\theta^\epsilon U^n=a_\epsilon(n)U^n,
 \qquad
 a_\epsilon(n)=
 \begin{cases}
  |n|^\epsilon,&n\neq0,\\
  0,&n=0,\ \epsilon\neq0,\\
  1,&n=0,\ \epsilon=0.
 \end{cases}
\end{equation}
Both $I_\theta^\epsilon$ and $J_\theta^\epsilon$ act continuously
on $\mathcal S(\qt)$ and, by duality, on $\mathcal S'(\qt)$.
Negative powers of the absolute value of the Dirac operator
introduced below are understood in the same way.

		The tuple $(D_1,\cdots,D_d)$ is a commuting self-adjoint one with the common core $W^1_2(\qt).$ We denote this tuple by $\nabla.$
		From the perspective of continuous functional calculus, we may also write the Fourier multiplier as
	\begin{equation*}
		T_g = g(D_1,\cdots,D_d).
	\end{equation*}

	The Dirac operator $\D$ is defined in terms of $\gamma$ matrices in direct analogy to commutative tori. Define $\mathbf{d} = 2^{\lfloor \frac{d}{2}\rfloor}$ and select $\mathbf{d}\times\mathbf{d}$ complex self-adjoint matrices $\{\gamma_1,\cdots,\gamma_d\}$ satisfying $\gamma_j\gamma_k +\gamma_k\gamma_j = 2\delta_{j,k}1_{\mathbf{d}\times \mathbf{d}}$, and define:
	\begin{equation*}
		\D = \sum_{j=1}^d \gamma_j\otimes D_j
	\end{equation*}
	as an unbounded, densely defined linear operator on the Hilbert space $\mathbb{C}^{\mathbf{d}}\otimes L_2(\qt)$. Since all $D_j$'s
	are self-adjoint, $\D$ is also self-adjoint. $\D$ is the square root of $-\Delta$ in the sense that $\D^2 = -1_{\mathbf{d}\times \mathbf{d}}\otimes \Delta$. Even though it is not necessary to introduce the Dirac operator $\D$ in this paper, the use of this notion will give more convenience when we represent the commutators by double operator integrals.
	
	Given $x\in L_\infty(\qt)$, denote by $M_x: y \mapsto xy$ the operator of left multiplication on $L_2(\qt)$. The operator $1\otimes M_x$ is a bounded linear operator on $\mathbb{C}^{\mathbf{d}} \otimes  L_2(\qt) $, where $1$ denotes the identity operator on $\mathbb{C}^{\mathbf{d}}$.
	
	\subsection{Operators, Ideals and traces}\label{operator notation subsection}
	The following material concerning operator ideals and traces is standard. For more details we refer the reader to \cite{DPS2023, LSZ2012, LSZ2021, Simon1979}. Let $H$ be a complex separable Hilbert space, and let $\Bc(H)$ denote the set of bounded operators on $H$, and let $\mathcal{K}(H)$ denote the ideal of compact operators on $H$. Given $T\in \mathcal{K}(H)$, its sequence of singular values is denoted by $\mu(T) = \{\mu(k,T)\}_{k=0}^\infty.$ For real $t\geq0$, we set
$\mu(t,T)=\mu(\lfloor t\rfloor,T)$.
We also set $\mathcal L_\infty=\mathcal B(H)$,
with the operator norm $\|\cdot\|_\infty$.
	
	The following basic properties of singular values will be frequently used in the sequel:
	\begin{equation}\label{singular-value-sum}
		\mu(t+s , A+B)  \leq  \mu (t, A) + \mu (s,  B),
	\end{equation}
	and
	\begin{equation}\label{singular-value-square}
		\mu(t , A)  =  \mu (t, A^*)  =   \mu (t,  |A|^p)^{\frac{1}{p}},
	\end{equation}
	for any $0<p<\infty$.
	See e.g. \cite[Section~2.3]{LSZ2012} or \cite[Section~1.5]{Xu2007}. See also \cite[Section~3.2]{DPS2023}.

	Let $p \in (0,\infty).$ The Schatten class $\mathcal{L}_p$ is the set of operators $T$ in $\mathcal{K}(H)$ such that $\mu(T)$ is $p$-summable, i.e. in the sequence space $\ell_p$. If $1\leq p <\infty$ then the $\mathcal{L}_p$
	norm is defined as:
	\begin{equation*}
		\|T\|_p := \|\mu(T)\|_{\ell_p} = \big(\sum_{k=0}^\infty \mu(k,T)^p\big)^{\frac{1}{p}}.
	\end{equation*}
	With this norm $\mathcal{L}_p$ for $1\leq p <\infty$ is a Banach space, and a Banach ideal of $\Bc(H)$.
	
	Analogously, for $0<p< \infty$, the weak Schatten class $\mathcal{L}_{p,\infty}$ is the set of operators $T\in \mathcal{K}(H)$ such that $\mu(T)$ is in the weak $L_p$-space $\ell_{p,\infty}$, with quasi-norm:
	\begin{equation*}
		\|T\|_{p,\infty} = \sup_{k\geq 0} (k+1)^{\frac{1}{p}}\mu(k,T) < \infty.
	\end{equation*}
	As with the $\mathcal{L}_p$ spaces, $\mathcal{L}_{p,\infty}$ is an ideal of $\Bc(H)$. We also have the following form of H\"older's inequality:
\begin{equation}\label{weak-type-holder}
 \|TS\|_{r,\infty}
 \leq
 \frac{(p+q)^{\frac1p+\frac1q}}
             {q^{\frac1p}p^{\frac1q}}
 \|T\|_{p,\infty}\|S\|_{q,\infty},
\end{equation}
where $p,q,r>0$ and
$\frac1r=\frac1p+\frac1q$.
This holds for $T\in\mathcal L_{p,\infty}$ and
$S\in\mathcal L_{q,\infty}$.
The constant is optimal; see
\cite[Theorem~4]{SZHolder2021}.

For $q>0$, $(\mathcal L_{q,\infty})_0$ is the closure of the
finite-rank operators, equivalently the set of $R$ satisfying
$t^{\frac{1}{q}}\mu(t,R)\to0$. For $q,r>0$, finite-rank approximation and
\eqref{weak-type-holder} show that
\begin{equation}\label{eq:small-ideal-product}
 (\mathcal L_{q,\infty})_0\mathcal L_{r,\infty}
 +\mathcal L_{r,\infty}(\mathcal L_{q,\infty})_0
 \subset(\mathcal L_{u,\infty})_0,\qquad
 \frac{1}{u}=\frac{1}{q}+\frac{1}{r}.
\end{equation}

	%
	%
	
	\subsection{Double operator integrals}
	
	Let $H$ be a (complex) separable Hilbert space. Let $D_0$ and $D_1$ be self-adjoint (potentially unbounded) operators on $H$, and $E^0$ and $E^1$ be the associated spectral measures. For all $x, y \in \mathcal{L}_2(H)$, the measure $(\lambda, \mu)  \mapsto \rTr(x\, dE^0(\lambda) \, y \, dE^1(\mu)  )$ is a countably additive complex valued measure on $\real^2$. We say that $\phi \in L_{\infty}(\real^2)$ is $E^0 \otimes E^1$ integrable if there exists an operator $T_\phi ^{D_0, D_1} \in \mathcal{B}  (\mathcal{L}_2(H))$ such that for all $x, y \in \mathcal{L}_2(H)$,
	$$\rTr (x\,T_\phi ^{D_0, D_1} y  ) =\int _{\real^2}    \phi(\lambda, \mu )   \rTr(x\, dE^0(\lambda) \, y \, dE^1(\mu)  ). $$
	The operator $T_\phi ^{D_0, D_1} $ is called the transformer. For $A\in \mathcal{L}_2(H) $, we define
	\begin{equation}\label{doi-def}
		T_\phi ^{D_0, D_1}(A)=\int _{\real^2}    \phi(\lambda, \mu )   dE^0(\lambda) \, A \, dE^1(\mu)  . \end{equation}
	This is called a double operator integral.

\subsection{A \texorpdfstring{$C^*$}{C*}-algebra of
pseudodifferential operators}
\label{subsec:Cstar-PDO}

We recall the $C^*$-algebraic principal symbol construction of
\cite{SZ2018,MSZ2019}, as in \cite{SXZ2023}. Let
$\Pi=\Pi(C(\qt),C(\mathbb S^{d-1}))$ be the norm-closed
$*$-algebra generated by the left multiplication operators
$\pi_1(x)=M_x$, $x\in C(\qt)$, and the Fourier multipliers
defined by
\[
 \pi_2(g)U^n=g\!\left(\frac{n}{|n|}\right)U^n,
 \quad n\neq0,\qquad \pi_2(g)1=0,
 \quad g\in C(\mathbb S^{d-1}).
\]
This defines a faithful $*$-representation.
By \cite[Corollary~3.7]{SXZ2023}, the commutators
$[\pi_1(x),\pi_2(g)]$ are compact. The principal
symbol construction
\cite[Theorem~3.3 and Lemma~4.5]{MSZ2019} gives a surjective
$*$-homomorphism
\[
 \sym:\Pi\longrightarrow C(\qt)\otimes_{\min}C(\mathbb S^{d-1}),
 \qquad \sym(M_x\pi_2(g))=x\otimes g,
\]
with kernel $\Pi\cap\mathcal K(L_2(\qt))$.
Thus $\sym(T)=\sym(S)$ if and only if $T-S$ is compact.
The symbol algebra is $C(\mathbb S^{d-1};C(\qt))$. Note that the $C^*$-algebra $\Pi$ plays the role of the algebra of classical pseudodifferential operators of order $0$, which contains continuous but non-smooth elements. 
Continuous functional calculus in $\Pi$ is compatible with $\sym$;
in particular, $\sym(A_\pm)=\sym(A)_\pm$ for $A=A^*\in\Pi$.


	\subsection{Commutator estimates}\label{sec-commutator}

	In this subsection, we collect some commutator estimates for Bessel and Riesz potentials $J_\theta^{\alpha}$ and $I_\theta^{\alpha}$ on quantum tori, that will be useful in the proofs of the main theorems. Firstly, let us recall the following estimate from \cite{SXZ2023}.
	
	\begin{thm}\label{commutator-PSD}
		Let $\alpha,  \beta \in \mathbb{R}$ and let $x\in \mathcal{S}(\qt).$ If $\alpha  <\beta +1,$ then
		$$[\pi_1 (x),I_\theta^{ \alpha } ] I_\theta^{- \beta } \in \mathcal{L}_{\frac{d}{\beta-\alpha +1},\infty}.$$
	\end{thm}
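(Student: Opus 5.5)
We plan to reduce Theorem~\ref{commutator-PSD} to a pseudodifferential symbol computation on $\mathbb Z^d$, combined with the spectral bound $I_\theta^{-\gamma}\in\mathcal L_{\frac d\gamma,\infty}$ for $\gamma>0$. The latter holds because the eigenvalue $|n|^{-\gamma}$ has multiplicity growing like the number of lattice points, so $\mu(t,I_\theta^{-\gamma})\approx t^{-\gamma/d}$.

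We first treat a single monomial. For $x=U^k$ we have $M_{U^k}U^n=e^{\ri\langle\cdot\rangle}U^{n+k}$, where the unimodular phase comes from the commutation relations. Hence
\[
[\pi_1(U^k),I_\theta^\alpha]U^n=\big(a_\alpha(n)-a_\alpha(n+k)\big)U^kU^n .
\]
Composing with $I_\theta^{-\beta}$ gives
\[
[\pi_1(U^k),I_\theta^\alpha]I_\theta^{-\beta}=M_{U^k}\,T_{g_k},\qquad g_k(n)=\big(a_\alpha(n)-a_\alpha(n+k)\big)a_{-\beta}(n).
\]
By the mean value theorem, for $|n|\ge 2|k|$ we get $|a_\alpha(n)-a_\alpha(n+k)|\le C_\alpha|k|(1+|n|)^{\alpha-1}$. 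The finitely many $n$ with $|n|<2|k|$, including $n=0$ and $n=-k$, contribute a finite-rank operator whose norm is at most $C(1+|k|)^{N}$ for some $N=N(\alpha,\beta)$. Therefore
\[
|g_k(n)|\lesssim (1+|k|)^{N}(1+|n|)^{\alpha-\beta-1}
\]
uniformly in $n$, after enlarging $N$.

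Since $\alpha-\beta-1<0$ and $T_{g_k}$ is diagonal, the bound above gives $\mu(t,T_{g_k})$ dominated by $\mu(t,J_\theta^{\alpha-\beta-1})$ up to the constant. This yields
\[
\|M_{U^k}T_{g_k}\|_{\frac d{\beta-\alpha+1},\infty}\le\|T_{g_k}\|_{\frac d{\beta-\alpha+1},\infty}\lesssim(1+|k|)^{N}.
\]
Here the unitary factor $M_{U^k}$ does not change singular values.

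For general $x\in\mathcal S(\qt)$ we write $x=\sum_k\widehat x(k)U^k$. The main obstacle is summing the series in the quasi-Banach space $\mathcal L_{p,\infty}$ with $p=\frac d{\beta-\alpha+1}$, which may be $\le1$. We handle this by the $r$-triangle inequality: for some $r=r(p)\in(0,1]$ the power $\|\cdot\|_{p,\infty}^r$ is equivalent to a subadditive functional, so that
\[
\Big\|\sum_k T_k\Big\|_{p,\infty}^r\lesssim\sum_k\|T_k\|_{p,\infty}^r .
\]
The rapid decay of $\widehat x(k)$ makes $\sum_k|\widehat x(k)|^r(1+|k|)^{Nr}$ finite. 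The partial sums then converge in $\mathcal L_{p,\infty}$, and the limit equals the operator $[\pi_1(x),I_\theta^\alpha]I_\theta^{-\beta}$ on polynomials, which completes the argument. One point needs care: when $\alpha>0$ the operator is unbounded a priori, so the identity must be checked on polynomials first and then extended by continuity.
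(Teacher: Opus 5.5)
The paper gives no proof of this statement. It quotes it from \cite{SXZ2023} and uses it as a black box in Corollary~\ref{commutator-PSD-dp}, and hence in \eqref{eq:Bessel-small-commutator}. So there is no in-paper argument to compare against. Judged on its own, your proof is correct and self-contained.

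\begin{itemize}
\item The monomial identity $[\pi_1(U^k),I_\theta^\alpha]I_\theta^{-\beta}=M_{U^k}T_{g_k}$, with $g_k(n)=(a_\alpha(n)-a_\alpha(n+k))a_{-\beta}(n)$, is right. It is the same computation as Lemma~\ref{lem:matrix}.
\item The bound $|g_k(n)|\le C(1+|k|)^N(1+|n|)^{\alpha-\beta-1}$, with $C,N$ independent of $k$, holds. The mean value theorem covers $|n|\ge2|k|$, since $|n+tk|\ge|n|/2$ on the segment there. The crude estimate covers the finite set $|n|<2|k|$, using $(1+|n|)^{\alpha-\beta-1}\ge(1+2|k|)^{\alpha-\beta-1}$.
\item Comparing the diagonal operator with $J_\theta^{\alpha-\beta-1}\in\mathcal L_{\frac{d}{\beta-\alpha+1},\infty}$ is legitimate.
\item The Aoki--Rolewicz $r$-norm correctly handles summation when $p\le 1$.
\end{itemize}

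You can simplify the summation step and avoid quasi-norms altogether by factoring the decay out on the right. Put $h(n)=(1+|n|^2)^{\frac{\alpha-\beta-1}{2}}$. Since $\alpha-\beta-1<0$, you have $(1+|n|)^{\alpha-\beta-1}\le h(n)$, so $\|T_{g_k/h}\|_\infty\lesssim(1+|k|)^N$. Therefore $B=\sum_k\widehat x(k)M_{U^k}T_{g_k/h}$ converges absolutely in $\mathcal B(L_2(\qt))$. Checking on each $U^n$ gives $[\pi_1(x),I_\theta^\alpha]I_\theta^{-\beta}=BJ_\theta^{\alpha-\beta-1}$ on polynomials, and this lies in $\mathcal L_{\frac{d}{\beta-\alpha+1},\infty}$ by the ideal property.

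This route has two further advantages. The extension-by-continuity step becomes immediate even when the individual factors are unbounded ($\alpha>1$ or $\beta<0$). It also gives a quantitative bound by the Schwartz seminorm $\sup_k(1+|k|)^{N+d+1}|\widehat x(k)|$. Your version yields the same conclusion, just with an extra layer of quasi-Banach bookkeeping.
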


	It will be useful for us to extend the above estimates for $[T,I_\theta^{ \alpha } ] $, where $T\in \Pi(C(\mathbb{T}_\theta^d),C(\mathbb{S}^{d-1}))$. In order to deal with such non-smooth elements, we need to approximate them with smooth ones.
	
\begin{lem}\label{commutator separable part abstract lemma}
Let $r>0$, let $\{A_i\}_{i\in\mathbb{I}}\subset\mathcal{L}_{\infty}$,
and let $B=B^*\in\mathcal{L}_{r,\infty}$.
If $[A_i,B]\in(\mathcal{L}_{r,\infty})_0$ for every
$i\in\mathbb{I}$, then $[T,B]\in(\mathcal{L}_{r,\infty})_0$
for every $T\in C^*(\{A_i\}_{i\in\mathbb{I}})$.
\end{lem}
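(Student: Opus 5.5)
Set $\mathcal A=\{T\in\mathcal L_\infty:\ [T,B]\in(\mathcal L_{r,\infty})_0\}$. The plan is to show that $\mathcal A$ is a norm-closed $*$-subalgebra of $\mathcal L_\infty$. Since $\mathcal A$ contains every $A_i$ by hypothesis, it then contains $C^*(\{A_i\}_{i\in\mathbb I})$, which is the conclusion. Throughout, I use that $(\mathcal L_{r,\infty})_0$ is a two-sided ideal in $\mathcal L_\infty$ that is closed in the quasi-norm $\|\cdot\|_{r,\infty}$. Both facts follow from its description as the closure of the finite-rank operators.

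\emph{Algebraic closure.} Linearity of $T\mapsto[T,B]$ gives closure under linear combinations, since $(\mathcal L_{r,\infty})_0$ is a linear space. For products I use the Leibniz rule
\[
[ST,B]=S[T,B]+[S,B]T.
\]
Each term on the right lies in $(\mathcal L_{r,\infty})_0$ because this is an ideal in $\mathcal L_\infty$. For adjoints, since $B=B^*$,
\[
[T^*,B]=-[T,B]^*.
\]
The class $(\mathcal L_{r,\infty})_0$ is closed under adjoints by \eqref{singular-value-square}. So $\mathcal A$ is a $*$-subalgebra; it also contains $1$, although this is not needed.

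\emph{Norm closure.} This is the only step that uses $B\in\mathcal L_{r,\infty}$ rather than merely $B$ bounded. Suppose $T_n\in\mathcal A$ and $\|T_n-T\|_\infty\to0$. Then
\[
\|[T,B]-[T_n,B]\|_{r,\infty}=\|[T-T_n,B]\|_{r,\infty}\leq 2c_r\|T-T_n\|_\infty\|B\|_{r,\infty}\to0.
\]
Here I use the ideal inequality $\|XB\|_{r,\infty},\|BX\|_{r,\infty}\leq\|X\|_\infty\|B\|_{r,\infty}$, which follows from $\mu(t,XB)\leq\|X\|_\infty\mu(t,B)$, and the quasi-triangle constant $c_r$ of $\|\cdot\|_{r,\infty}$. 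Since $(\mathcal L_{r,\infty})_0$ is closed in $\mathcal L_{r,\infty}$, it follows that $[T,B]\in(\mathcal L_{r,\infty})_0$.

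To be fully careful about that closedness, I would check it directly via the limit characterisation. Apply \eqref{singular-value-sum} with the split $\mu(2t,X)\leq\mu(t,X_n)+\mu(t,X-X_n)$. Multiplying by $t^{1/r}$ and taking $\limsup$ gives
\[
\limsup_{t\to\infty} t^{1/r}\mu(t,X)\lesssim\|X-X_n\|_{r,\infty},
\]
which can be made arbitrarily small.

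I do not expect a genuine obstacle. The only points needing care are the quasi-norm constants, since $r$ may be at most $1$, and the observation above that $B\in\mathcal L_{r,\infty}$ is exactly what makes $T\mapsto[T,B]$ continuous from $\mathcal L_\infty$ into $\mathcal L_{r,\infty}$. Combining the two steps, $\mathcal A$ is a $C^*$-subalgebra containing every $A_i$. Hence $C^*(\{A_i\}_{i\in\mathbb I})\subset\mathcal A$.
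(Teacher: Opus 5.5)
Your proposal is correct and follows essentially the same route as the paper. The paper explicitly forms the linear span of finite products of the $A_i$ and $A_i^*$, using $[A_i^*,B]=-[A_i,B]^*$ and the Leibniz rule. It then passes to norm limits via $\|[T_k-T,B]\|_{r,\infty}\leq c_r\|T_k-T\|_\infty\|B\|_{r,\infty}$ and the closedness of $(\mathcal L_{r,\infty})_0$, which is just an explicit version of your argument that $\mathcal A$ is a norm-closed $*$-subalgebra.
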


\begin{proof}
Since $B=B^*$, we have
\[
[A_i^*,B]=-[A_i,B]^*\in(\mathcal{L}_{r,\infty})_0
\]
for every $i\in\mathbb{I}$.
Let $\mathbb{J}$ be the collection of all finite nonempty tuples
with entries in $\{A_i,A_i^*:i\in\mathbb{I}\}$.
For every $j=(S_1,\ldots,S_n)\in\mathbb{J}$, set
$C_j=S_1\cdots S_n$.
By the Leibniz rule, we have
$[C_j,B]\in(\mathcal{L}_{r,\infty})_0$
for every $j\in\mathbb{J}$.

Let $X=\operatorname{span}\{C_j:j\in\mathbb J\}$.
Hence $[T,B]\in(\mathcal L_{r,\infty})_0$ for every $T\in X$.

Let $T\in C^*(\{A_i\}_{i\in\mathbb{I}})$.
Choose a sequence $\{T_k\}_{k\geq0}\subset X$ such that
$T_k\to T$ in the uniform norm. Then
\[
\|[T_k-T,B]\|_{r,\infty}
\leq c_r\|T_k-T\|_\infty\|B\|_{r,\infty}
\longrightarrow0.
\]
Since $[T_k,B]\in(\mathcal{L}_{r,\infty})_0$ and this subspace
is closed in $\mathcal{L}_{r,\infty}$, we conclude that
$[T,B]\in(\mathcal{L}_{r,\infty})_0$.
This completes the proof.
\end{proof}

	\begin{cor}\label{commutator-PSD-dp}
		Let $d\geq 2$ and $0<\alpha<\infty.$ For every $T\in \Pi(C(\mathbb{T}_\theta^d),C(\mathbb{S}^{d-1})),$ we have
		$$[T,I_\theta^{-\alpha}]\in(\mathcal{L}_{\frac{d}{\alpha},\infty})_0.$$
	\end{cor}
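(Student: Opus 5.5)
The plan is to apply Lemma~\ref{commutator separable part abstract lemma} with $r=\frac{d}{\alpha}$ and $B=I_\theta^{-\alpha}$, taking as generating family $\{A_i\}$ the operators $\pi_1(x)$ with $x\in\mathcal S(\qt)$ together with $\pi_2(g)$ with $g\in C(\mathbb S^{d-1})$.

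First, $B$ is admissible. It is selfadjoint, and its singular values are $|n|^{-\alpha}$ for $n\neq0$ (and $0$ on $\mathbb C1$). Since $\#\{n:|n|\leq R\}\approx R^d$, we get $\mu(t,B)\approx t^{-\alpha/d}$, so $B\in\mathcal L_{\frac d\alpha,\infty}$.

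Second, the generated $C^*$-algebra is all of $\Pi$. Indeed, $\mathcal S(\qt)$ is norm dense in $C(\qt)$ and $\pi_1$ is contractive, so the $C^*$-algebra generated by $\{\pi_1(x):x\in\mathcal S(\qt)\}\cup\{\pi_2(g)\}$ equals $\Pi$.

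Third, we check the commutator hypothesis on each generator.
\begin{itemize}
\item For $\pi_2(g)$ it is trivial. Both $\pi_2(g)$ and $I_\theta^{-\alpha}$ are Fourier multipliers, hence they commute and $[\pi_2(g),B]=0$.
\item For $\pi_1(x)$ with $x\in\mathcal S(\qt)$, we need $[\pi_1(x),I_\theta^{-\alpha}]\in(\mathcal L_{\frac d\alpha,\infty})_0$. Theorem~\ref{commutator-PSD} with $\beta=0$ and exponent $-\alpha$ (so the condition $-\alpha<1$ holds) only gives membership in $\mathcal L_{\frac{d}{1+\alpha},\infty}$. But $\frac{d}{1+\alpha}<\frac d\alpha$, and $\mathcal L_{q,\infty}\subset(\mathcal L_{r,\infty})_0$ whenever $q<r$: if $\mu(t)\lesssim t^{-1/q}$, then $t^{1/r}\mu(t)\to0$. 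This gives what we need.
\end{itemize}

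Applying Lemma~\ref{commutator separable part abstract lemma} then yields $[T,I_\theta^{-\alpha}]\in(\mathcal L_{\frac d\alpha,\infty})_0$ for all $T\in\Pi$.

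The only mildly delicate step is the second one: making sure the smooth generators really generate $\Pi$, i.e.\ that norm-density of $\mathcal S(\qt)$ in $C(\qt)$ transfers through $\pi_1$. Once that is granted, everything else is a direct check.
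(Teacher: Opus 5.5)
Your proof is correct and is essentially the paper's argument: Theorem~\ref{commutator-PSD} with $\beta=0$ for $x\in\mathcal S(\qt)$, the vanishing of $[\pi_2(g),I_\theta^{-\alpha}]$, and Lemma~\ref{commutator separable part abstract lemma}. The only differences are that you apply the lemma once to the generating set $\pi_1(\mathcal S(\qt))\cup\pi_2(C(\mathbb S^{d-1}))$ rather than twice (first to reach $\pi_1(C(\qt))$, then all of $\Pi$), and that you spell out the inclusion $\mathcal L_{\frac{d}{1+\alpha},\infty}\subset(\mathcal L_{\frac{d}{\alpha},\infty})_0$, which the paper uses implicitly.
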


	\begin{proof} If $x\in C(\qt),$ then Theorem \ref{commutator-PSD} and Lemma \ref{commutator separable part abstract lemma} yield
		$$[\pi_1(x),I_{\theta}^{-\alpha}]\in(\mathcal{L}_{\frac{d}{\alpha},\infty})_0.$$
		It is immediate that, for every $g\in C(\mathbb{S}^{d-1}),$
		$$[\pi_2(g),I_{\theta}^{-\alpha}]=0.$$
		The assertion follows from Lemma \ref{commutator separable part abstract lemma}.
	\end{proof}

	\section{Boundedness and Hilbert--Schmidt properties of the commutator}
\label{sec:boundedness-HS}

We define the commutator for $x\in L_1(\qt)$ and establish the
preliminary mapping properties needed in the proof of
Theorem~\ref{mta}. We then give a Hilbert--Schmidt criterion for
$\epsilon\leq-\frac{d}{2}$ and compare this range with the
Euclidean case.

\subsection{Definition and a priori boundedness}
\label{subsec:commutator-definition}

We identify $z\in L_1(\qt)$ with the distribution
$\eta\mapsto\tau(z\eta)$ on $\mathcal S(\qt)$.
We write $\langle\eta,u\rangle=u(\eta^*)$ for
$\eta\in\mathcal S(\qt)$ and $u\in\mathcal S'(\qt)$.
For $u\in L_2(\qt)$, this is the inner product
$\tau(\eta^*u)$.

\begin{lem}\label{lem-definition}
Let $\epsilon\in\mathbb R$ and $x\in L_1(\qt)$. For every
$\xi\in\mathcal S(\qt)$, the expressions
$I_\theta^\epsilon M_x\xi$ and $M_xI_\theta^\epsilon\xi$
define elements of $\mathcal S'(\qt)$, uniquely determined by
\begin{equation}\label{eq:dis}
 \langle\eta,I_\theta^\epsilon M_x\xi\rangle
 =\tau\bigl((I_\theta^\epsilon\eta)^*x\xi\bigr),
 \qquad
 \langle\eta,M_xI_\theta^\epsilon\xi\rangle
 =\tau\bigl(\eta^*xI_\theta^\epsilon\xi\bigr),
 \qquad \eta\in\mathcal S(\qt).
\end{equation}
\end{lem}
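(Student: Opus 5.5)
We have to show that each right-hand side of \eqref{eq:dis} is a well-defined, continuous, antilinear functional of $\eta\in\mathcal S(\qt)$. Such a functional determines a unique element of $\mathcal S'(\qt)$ through the pairing $\langle\eta,u\rangle=u(\eta^*)$: put $u(\eta):=$ the right-hand side evaluated at $\eta^*$. Uniqueness is then automatic, since a distribution is determined by its values on $\mathcal S(\qt)$. So the whole argument reduces to two facts:
\begin{enumerate}[(a)]
\item $I_\theta^\epsilon$ maps $\mathcal S(\qt)$ continuously into itself;
\item for $x\in L_1(\qt)$ and $\zeta,\xi\in\mathcal S(\qt)$ we have $|\tau(\zeta^* x\xi)|\le\|x\|_1\|\zeta\|_\infty\|\xi\|_\infty$, and $\zeta\mapsto\zeta^*$ is continuous on $\mathcal S(\qt)$.
\end{enumerate}

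For (a), I use the symbol \eqref{eq:a-epsilon}: $|a_\epsilon(n)|\le(1+|n|)^{|\epsilon|}$ for every $n$. Multiplying the Fourier coefficients by $a_\epsilon(n)$ therefore preserves rapid decay. The Fréchet seminorms $\sup_n(1+|n|)^k|\widehat\eta(n)|$ satisfy
\[
\sup_n(1+|n|)^k|a_\epsilon(n)\widehat\eta(n)|\le\sup_n(1+|n|)^{k+|\epsilon|}|\widehat\eta(n)|,
\]
so $I_\theta^\epsilon$ is continuous on $\mathcal S(\qt)$. It also maps into $C(\qt)$, because the Fourier series of a rapidly decaying coefficient sequence converges absolutely in norm. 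This was already asserted in Section~\ref{subsec-nctori}. I also need $\|\zeta\|_\infty\le\sum_n|\widehat\zeta(n)|$, which is bounded by a Schwartz seminorm of $\zeta$.

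For (b), I apply Hölder's inequality in $L_p(\qt)$ with exponents $(\infty,1,\infty)$, valid for products of bounded operators with an $L_1$ element. The adjoint is continuous on $\mathcal S(\qt)$ because $\widehat{\zeta^*}(n)$ equals $\overline{\widehat\zeta(-n)}$ up to a unimodular phase coming from the commutation relations. This phase does not affect the decay seminorms.

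Combining these, for the first functional,
\[
\bigl|\tau\bigl((I_\theta^\epsilon\eta)^*x\xi\bigr)\bigr|\le\|x\|_1\|\xi\|_\infty\sum_n(1+|n|)^{|\epsilon|}|\widehat\eta(n)|,
\]
which is a continuous seminorm in $\eta$. The expression is antilinear in $\eta$, so $u(\eta):=\tau((I_\theta^\epsilon\eta^*)^*x\xi)$ is linear and continuous. For the second functional, the same estimate holds with $\|\eta\|_\infty\|I_\theta^\epsilon\xi\|_\infty$ in place of $\|I_\theta^\epsilon\eta\|_\infty\|\xi\|_\infty$. The one point needing care is checking that $\eta\mapsto I_\theta^\epsilon(\eta^*)$ is composed correctly with the adjoint so that $u$ is genuinely linear; this is routine. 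There is no real obstacle: the proof is a direct verification.
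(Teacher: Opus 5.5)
Your proposal is correct and follows essentially the paper's argument: continuity of $I_\theta^\epsilon$ on $\mathcal S(\qt)$ from the polynomial growth of $a_\epsilon$, the bound $\|I_\theta^\epsilon\eta\|_\infty\leq\sum_n|a_\epsilon(n)|\,|\widehat\eta(n)|$, and H\"older's estimate $\|x\|_1\|\cdot\|_\infty\|\cdot\|_\infty$. The only cosmetic difference is how you make the functional in $\eta$ linear: you use continuity of $\zeta\mapsto\zeta^*$ on $\mathcal S(\qt)$, whereas the paper uses that $a_\epsilon$ is real and even, so $I_\theta^\epsilon(\eta^*)=(I_\theta^\epsilon\eta)^*$, together with the trace property.
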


\begin{proof}
The symbol $a_\epsilon$ defined in \eqref{eq:a-epsilon} grows at most polynomially, so
$I_\theta^\epsilon$ acts continuously on $\mathcal S(\qt)$.
For example, if $N>d+\max\{\epsilon,0\}$, then
\[
 \|I_\theta^\epsilon\eta\|_\infty
 \leq\sum_{n\in\mathbb Z^d}|a_\epsilon(n)|\,|\widehat\eta(n)|
 \leq C_N\sup_{n\in\mathbb Z^d}
              (1+|n|)^N|\widehat\eta(n)|.
\]
The functions
\[
 \eta\longmapsto\tau(x\xi I_\theta^\epsilon\eta),
 \qquad
 \eta\longmapsto\tau(xI_\theta^\epsilon\xi\,\eta)
\]
are continuous linear functionals on $\mathcal S(\qt)$, since
\[
 |\tau(x\xi I_\theta^\epsilon\eta)|
 \leq\|x\|_1\|\xi\|_\infty\|I_\theta^\epsilon\eta\|_\infty,
 \qquad
 |\tau(xI_\theta^\epsilon\xi\,\eta)|
 \leq\|x\|_1\|I_\theta^\epsilon\xi\|_\infty\|\eta\|_\infty.
\]
These functionals define the two distributions. Since $a_\epsilon$
is real and even,
$I_\theta^\epsilon(\eta^*)=(I_\theta^\epsilon\eta)^*$.
Replacing $\eta$ by $\eta^*$ and using the trace property gives
the identities in \eqref{eq:dis}. They determine both distributions uniquely.
\end{proof}

\begin{defi}\label{def-commutator}
For $\epsilon\in\mathbb R$ and $x\in L_1(\qt)$, define
\[
 [I_\theta^\epsilon,M_x]\xi
 =I_\theta^\epsilon M_x\xi-M_xI_\theta^\epsilon\xi,
 \qquad \xi\in\mathcal S(\qt),
\]
using the distributions in Lemma~\ref{lem-definition}.
This is a continuous linear map from $\mathcal S(\qt)$ to
$\mathcal S'(\qt)$. Membership of this commutator in an operator
ideal means that it has a bounded extension on $L_2(\qt)$
belonging to that ideal.
\end{defi}

The ordering of the monomials gives
\begin{equation}\label{eq:cocycle}
 U^kU^n=\sigma(k,n)U^{k+n},
 \qquad
 \sigma(k,n)=
 \exp\!\left(-\ri\sum_{1\leq r<s\leq d}
                    \theta_{r,s}k_s n_r\right).
\end{equation}
In particular, $|\sigma(k,n)|=1$.

\begin{lem}\label{lem:matrix}
Let $\epsilon<1$ and $x\in L_1(\qt)$. For all
$m,n\in\mathbb Z^d$, the distributional Fourier matrix of the
commutator is
\begin{equation}\label{eq:matrix}
 \big\langle U^m,[I_\theta^\epsilon,M_x]U^n\big\rangle
 =\sigma(m-n,n)\widehat x(m-n)
  \bigl(a_\epsilon(m)-a_\epsilon(n)\bigr).
\end{equation}
Here $a_\epsilon$ is defined in \eqref{eq:a-epsilon}.
\end{lem}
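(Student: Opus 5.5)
The plan is to compute the two pairings in \eqref{eq:dis} separately, with $\eta=U^m$ and $\xi=U^n$, and then subtract. Both $U^m$ and $U^n$ lie in $\mathcal S(\qt)$, so Lemma~\ref{lem-definition} applies directly. Since $I_\theta^\epsilon U^m=a_\epsilon(m)U^m$ and $a_\epsilon(m)$ is real, the first identity in \eqref{eq:dis} gives
\[
 \langle U^m,I_\theta^\epsilon M_xU^n\rangle
 =a_\epsilon(m)\,\tau\bigl((U^m)^*xU^n\bigr).
\]
Similarly, the second identity gives
\[
 \langle U^m,M_xI_\theta^\epsilon U^n\rangle
 =a_\epsilon(n)\,\tau\bigl((U^m)^*xU^n\bigr).
\]
Subtracting, the matrix entry equals $\bigl(a_\epsilon(m)-a_\epsilon(n)\bigr)\tau\bigl((U^m)^*xU^n\bigr)$. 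It therefore remains only to show that $\tau\bigl((U^m)^*xU^n\bigr)=\sigma(m-n,n)\widehat x(m-n)$.

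For this I would use the trace property to write $\tau((U^m)^*xU^n)=\tau(xU^n(U^m)^*)$. Next I apply the cocycle relation \eqref{eq:cocycle} with $k=m-n$: since $U^{m-n}U^n=\sigma(m-n,n)U^m$ and $|\sigma|=1$, we get $U^n(U^m)^*=\overline{\sigma(m-n,n)}^{\,-1}\ldots$. More cleanly, take adjoints: $(U^m)^*=\overline{\sigma(m-n,n)}\,(U^n)^*(U^{m-n})^*$, so $U^n(U^m)^*=\overline{\sigma(m-n,n)}\,(U^{m-n})^*$ by unitarity of $U^n$. Hence
\[
 \tau\bigl(xU^n(U^m)^*\bigr)=\overline{\sigma(m-n,n)}\,\widehat x(m-n).
\]

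The main point to watch is this conjugation and sign convention. One has to check that the phase comes out as $\sigma(m-n,n)$ and not as its conjugate, and this depends on the convention $\langle\eta,u\rangle=u(\eta^*)$. I would redo the step starting from $\tau(\eta^*u)$ with $u=xU^n$ and $\eta=U^m$, write $x=\sum_k\widehat x(k)U^k$ in $L_2$ sense (using density and continuity of $\tau$ on $L_1$ against the bounded element $U^n(U^m)^*$), and compute $\tau((U^m)^*U^kU^n)=\sigma(k,n)\tau((U^m)^*U^{k+n})=\sigma(k,n)\delta_{k+n,m}$. This gives exactly $\sigma(m-n,n)\widehat x(m-n)$; the adjoint manipulation above must be reconciled with it, and I trust this direct computation. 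The trace property is valid because $x\in L_1$ and the monomials are bounded.

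Finally, the hypothesis $\epsilon<1$ plays no role beyond the well-definedness already given by Lemma~\ref{lem-definition}. The computation holds for any real $\epsilon$, with $a_\epsilon(0)$ as defined in \eqref{eq:a-epsilon}.
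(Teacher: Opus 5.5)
Your argument is the paper's. You pair against $U^m$ using \eqref{eq:dis} and pull out $a_\epsilon(m)$ and $a_\epsilon(n)$; the paper phrases this as taking the $m$th Fourier coefficients of $I_\theta^\epsilon(xU^n)$ and $xI_\theta^\epsilon U^n$. Both routes then reduce to $\tau(xU^n(U^m)^*)=\sigma(m-n,n)\widehat x(m-n)$, and your final formula is correct.

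Your ``cleaner'' adjoint step contains an algebra slip, not a convention issue. Taking adjoints in $U^{m-n}U^n=\sigma(m-n,n)U^m$ gives $(U^n)^*(U^{m-n})^*=\overline{\sigma(m-n,n)}\,(U^m)^*$. Since $|\sigma|=1$, this means $(U^m)^*=\sigma(m-n,n)(U^n)^*(U^{m-n})^*$, and hence $U^n(U^m)^*=\sigma(m-n,n)(U^{m-n})^*$. This is exactly the identity the paper uses. It gives $\tau(xU^n(U^m)^*)=\sigma(m-n,n)\tau(x(U^{m-n})^*)=\sigma(m-n,n)\widehat x(m-n)$ directly from the definition of $\widehat x$, for every $x\in L_1(\qt)$. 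The pairing convention $\langle\eta,u\rangle=u(\eta^*)$ is already absorbed into \eqref{eq:dis} and plays no further role, so nothing needs reconciling.

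Your fallback computation is also correct, but its justification should be stated properly. An element of $L_1(\qt)$ need not have an $L_2$-convergent Fourier expansion. What works is the following:
\begin{itemize}
\item $x\mapsto\tau((U^m)^*xU^n)$ and $x\mapsto\sigma(m-n,n)\widehat x(m-n)$ are both continuous linear functionals on $L_1(\qt)$, each bounded by $\|x\|_1$.
\item They agree on every monomial $x=U^k$, by your computation $\tau((U^m)^*U^kU^n)=\sigma(k,n)\delta_{k+n,m}$.
\item Polynomials are dense in $L_1(\qt)$.
\end{itemize}
With the corrected adjoint identity, this detour is unnecessary.
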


\begin{proof}
For $x\in L_1(\qt)$, \eqref{eq:cocycle} gives
\[
 U^n(U^m)^*=\sigma(m-n,n)(U^{m-n})^*,
\]
and therefore
\[
 \widehat{xU^n}(m)
 =\tau\bigl(xU^n(U^m)^*\bigr)
 =\sigma(m-n,n)\widehat x(m-n).
\]
The $m$th Fourier coefficient of $I_\theta^\epsilon(xU^n)$ is
$a_\epsilon(m)\widehat{xU^n}(m)$, and that of
$xI_\theta^\epsilon U^n$ is
$a_\epsilon(n)\widehat{xU^n}(m)$.
Their difference proves \eqref{eq:matrix}.
\end{proof}

\begin{prop}\label{prop:negative-order-boundedness}
Let $-\frac{d}{2}<\epsilon<0$ and
$x\in\dot W^1_{\frac{d}{1-\epsilon}}(\qt)$. Then
$[I_\theta^\epsilon,M_x]$ extends to a bounded operator on
$L_2(\qt)$.
\end{prop}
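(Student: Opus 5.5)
Since $\epsilon<0$, the operator $I_\theta^\epsilon$ is bounded on $L_2(\qt)$ (its symbol $a_\epsilon$ is bounded by $1$ and vanishes at $0$). So the plan is to show separately that $I_\theta^\epsilon M_x$ and $M_xI_\theta^\epsilon$ extend boundedly. The commutator on $\mathcal S(\qt)$ then agrees with the difference of these two bounded operators. First, constants contribute nothing: $[I_\theta^\epsilon,M_1]=0$ by \eqref{eq:matrix}, since $\widehat 1(m-n)=0$ unless $m=n$. So we may replace $x$ by $y=x-\widehat x(0)$. By the Poincar\'e inequality \eqref{eq:distribution-Poincare}, $y\in L_p(\qt)$ with $p=\frac{d}{1-\epsilon}$ and $\|y\|_p\leq C\|x\|_{\dot W^1_p}$. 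Note that $1<p<2$ is possible when $\epsilon<1-\frac d2$, so $y$ need not be bounded; the whole point is to compensate for this with the smoothing of $I_\theta^\epsilon$.

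The key step is a Sobolev embedding for the Riesz potential on quantum tori. I would show that $I_\theta^\epsilon$ maps $L_2(\qt)$ into $L_q(\qt)$ with $\frac1q=\frac12+\frac{\epsilon}{d}$. This requires $\frac12+\frac{\epsilon}{d}>0$, which is exactly the hypothesis $\epsilon>-\frac d2$. Dually, it maps $L_{q'}(\qt)$ into $L_2(\qt)$. This is the noncommutative Hardy--Littlewood--Sobolev inequality; I would cite it from \cite{XXY2018} (Bessel/Riesz potential embeddings on $\qt$). Alternatively, I would derive it by transference from $\mathbb T^d$ via the trace-preserving embedding $U^n\mapsto z^n\otimes U^n$ into $L_\infty(\mathbb T^d)\bar\otimes L_\infty(\qt)$, which intertwines Fourier multipliers with classical ones. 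If only the Bessel version is available, note that $a_\epsilon(n)(1+|n|^2)^{-\epsilon/2}$ is a smooth symbol of order $0$ away from $n=0$, hence an $L_r$-bounded multiplier for $1<r<\infty$.

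With the embedding in hand, Hölder's inequality finishes the argument. Let $\frac1p+\frac1q=\frac{1-\epsilon}{d}+\frac12+\frac{\epsilon}{d}=\frac12+\frac1d$. Then for $\xi\in\mathcal S(\qt)$,
\[
\|yI_\theta^\epsilon\xi\|_r\leq \|y\|_p\|I_\theta^\epsilon\xi\|_q,\qquad \tfrac1r=\tfrac12+\tfrac1d .
\]
This lands in $L_r$ with $r<2$, not in $L_2$, so the exponents do not close. The remedy is to split $I_\theta^\epsilon=I_\theta^{\epsilon_1}I_\theta^{\epsilon_2}$ differently, or better, to use the full commutator structure. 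Concretely, one cannot bound $M_yI_\theta^\epsilon$ alone when $y\in L_p$ only. Instead I would write the commutator as a sum of terms each containing a derivative of $x$. For instance, use the identity $[I_\theta^\epsilon,M_y]=\sum_j [I_\theta^{\epsilon}R_j', M_y]$-type decompositions, or more concretely use \eqref{eq:matrix} together with the mean value bound
\[
|a_\epsilon(m)-a_\epsilon(n)|\lesssim |m-n|\,\bigl(|m|^{\epsilon-1}+|n|^{\epsilon-1}\bigr)
\]
when both are nonzero and $|m-n|\leq\frac12\max(|m|,|n|)$. Off this region, use $|a_\epsilon(m)-a_\epsilon(n)|\lesssim |m|^\epsilon+|n|^\epsilon$ with $|m-n|\gtrsim\max(|m|,|n|)$, so again $\lesssim|m-n|(|m|^{\epsilon-1}+|n|^{\epsilon-1})$. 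Thus the matrix entries are dominated by $|\widehat{\nabla x}(m-n)|\,(|m|^{\epsilon-1}+|n|^{\epsilon-1})$. Modulo the noncommutative phase, this is the matrix of $M_{D_jx}I_\theta^{\epsilon-1}$ plus $I_\theta^{\epsilon-1}M_{D_jx}$. These are now bounded by the Hölder argument: $I_\theta^{\epsilon-1}$ maps $L_2$ to $L_s$ with $\frac1s=\frac12-\frac{1-\epsilon}{d}$, and $\frac1p+\frac1s=\frac12$.

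The main obstacle is turning this entrywise domination into an operator bound, because absolute values of matrix entries do not preserve boundedness in general and the phases $\sigma$ are noncommutative. I would handle this by an exact double operator integral decomposition instead of entrywise bounds. Write
\[
a_\epsilon(m)-a_\epsilon(n)=\sum_j (m_j-n_j)\,\phi_j(m,n)
\]
with $\phi_j$ a divided-difference symbol of order $\epsilon-1$. Then $[I_\theta^\epsilon,M_y]=\sum_j T_{\phi_j}([D_j,M_y])=\sum_jT_{\phi_j}(M_{D_jx})$. Next, factor $\phi_j(m,n)=\psi_j(m,n)\,|n|^{\epsilon-1}$, or split symmetrically between $|m|^{\epsilon-1}$ and $|n|^{\epsilon-1}$, where $\psi_j$ is a bounded Schur multiplier of order $0$; this is in the spirit of the double operator integral estimates used in \cite{SXZ2023,FSZ2024}. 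Verifying the Schur-multiplier boundedness of $\psi_j$ on $\mathcal B(L_2)$, uniformly, is where I expect the real work to be. Once it is done, boundedness follows from $M_{D_jx}I_\theta^{\epsilon-1}\in\mathcal B(L_2(\qt))$ via the Hölder and Sobolev argument above.
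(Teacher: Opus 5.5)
There is a genuine gap, and it sits where you abandon the direct argument. After subtracting the mean you only use $y\in L_p(\qt)$ with $p=\frac{d}{1-\epsilon}$, so your H\"older count lands in $L_r$ with $\frac1r=\frac12+\frac1d$. The missing ingredient is the Sobolev embedding. Since $\epsilon<0$ we have $p<d$, and the Poincar\'e inequality together with \cite[Theorem~6.6(i)]{XXY2018} gives $\|y\|_{q_0}\lesssim\|x\|_{\dot W^1_p}$ with $\frac1{q_0}=\frac1p-\frac1d=-\frac{\epsilon}{d}$, that is $q_0=-\frac d\epsilon>2$. This removes exactly the surplus $\frac1d$. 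Both terms are then bounded separately by the two Hardy--Littlewood--Sobolev estimates you already stated:
\[
\|yI_\theta^\epsilon\eta\|_2\leq\|y\|_{q_0}\|I_\theta^\epsilon\eta\|_{\frac{2d}{d+2\epsilon}}\lesssim\|y\|_{q_0}\|\eta\|_2,
\qquad
\|I_\theta^\epsilon(y\eta)\|_2\lesssim\|y\eta\|_{r}\leq\|y\|_{q_0}\|\eta\|_2 .
\]
Here $\frac1r=\frac12+\frac1{q_0}=\frac12-\frac\epsilon d$, and $\frac1r<1$ precisely because $\epsilon>-\frac d2$. This is the paper's proof. The obstruction you identify for $M_yI_\theta^\epsilon$ disappears once the derivative information is used.

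The double operator integral route you substitute does not close as written.
\begin{itemize}
\item The factorisation $\phi_j=\psi_j|n|^{\epsilon-1}$ cannot give a bounded Schur multiplier for any $\epsilon<0$. On the line $m=ae_1$, $n=be_1$, the identity forces $\psi_1(ae_1,be_1)=\frac{|a|^\epsilon-|b|^\epsilon}{a-b}|b|^{1-\epsilon}$ for $a\neq b$. Then $\psi_1(e_1,Ne_1)\sim-N^{-\epsilon}$, so even the entries are unbounded.
\item The operator you reduce to, $M_{D_jx}I_\theta^{\epsilon-1}$, is not bounded when $p<2$, that is when $\epsilon<1-\frac d2$; for $d=2$ this is the whole range. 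Your exponent $\frac1s=\frac12-\frac{1-\epsilon}{d}$ is then negative. Concretely, $M_{D_jx}I_\theta^{\epsilon-1}U^{e_1}=(D_jx)U^{e_1}$ lies in $L_2$ only if $D_jx$ does.
\item The symmetric sandwich $I_\theta^{\frac{\epsilon-1}{2}}M_{D_jx}I_\theta^{\frac{\epsilon-1}{2}}$ is bounded. With $\phi_j(m,n)=\frac{|m|^\epsilon-|n|^\epsilon}{|m|^2-|n|^2}(m_j+n_j)$, the corresponding $\psi_j$ is a bounded Schur multiplier away from the zero mode when $-1<\epsilon<0$.
\item This fails for $\epsilon\leq-1$. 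The same line restriction shows that no single weight $|m|^{\alpha}|n|^{\beta}$ with $\alpha+\beta=1-\epsilon$ even gives bounded entries when $\epsilon<-1$. At $\epsilon=-1$ it produces a multiplier $\frac{t-s}{s+t}$ on pairs of positive integers. This is unbounded on $\mathcal B(\ell_2)$: restricting $s,t$ to a lacunary sequence yields triangular truncation.
\end{itemize}
Since $-\frac d2<\epsilon\leq-1$ occurs for $d\geq3$, completing this route needs the multi-term splitting of Lemma~\ref{Psi-ep-decomp}. It also needs estimates in $\mathcal L_{\frac{d}{1-\epsilon},\infty}$ rather than $\mathcal B(L_2)$. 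That is essentially the proof of Theorem~\ref{mta}, which is far more than this proposition requires.
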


\begin{proof}
Subtracting a constant, we may assume that $\tau(x)=\widehat x(0)=0$.
Put
\[
 q=-\frac{d}{\epsilon}>2,
 \qquad
 \frac{1}{r}=\frac{1}{2}+\frac{1}{q}.
\]
By \eqref{eq:distribution-Poincare} and
\cite[Theorem~6.6(i)]{XXY2018}, we have
\[
 \|x\|_q\lesssim_{d,\epsilon}\|x\|_{\dot W_{\frac{d}{1-\epsilon}}^1}.
\]
Also, $r>1$ since $q>2$.

By \cite[Theorems~2.8 and~6.6(i)]{XXY2018}, for $\alpha>0$,
\[
 \|I_\theta^{-\alpha}u\|_b
 \lesssim_{d,\alpha,a}\|u\|_a,
 \qquad
 1<a<b<\infty,\quad
 \frac1b=\frac1a-\frac\alpha d.
\]
For $\eta\in L_2(\qt)$, take $\alpha=-\epsilon$ and $(a,b)=(r,2)$.
H\"older's inequality then gives
\[
 \|I_\theta^\epsilon(x\eta)\|_2
 \lesssim_{d,\epsilon}\|x\eta\|_r
 \leq\|x\|_q\|\eta\|_2.
\]
Taking $(a,b)=(2,\frac{2d}{d+2\epsilon})$
in the preceding estimate gives
\[
 \|I_\theta^\epsilon\eta\|_{\frac{2d}{d+2\epsilon}}
 \lesssim_{d,\epsilon}\|\eta\|_2.
\]
Since $\frac{1}{q}+\frac{d+2\epsilon}{2d}=\frac{1}{2}$,
\[
 \|xI_\theta^\epsilon\eta\|_2
 \leq\|x\|_q
      \|I_\theta^\epsilon\eta\|_{\frac{2d}{d+2\epsilon}}
 \lesssim_{d,\epsilon}\|x\|_q\|\eta\|_2.
\]
Both terms therefore define bounded operators on $L_2(\qt)$,
and their difference extends the distributional commutator.
\end{proof}

\begin{prop}\label{prop:positive-order-domain}
Let $0<\epsilon<1$ and
$x\in\dot W^1_{\frac{d}{1-\epsilon}}(\qt)$. Then
$[I_\theta^\epsilon,M_x]\xi\in L_2(\qt)$ for every
$\xi\in\mathcal S(\qt)$.
\end{prop}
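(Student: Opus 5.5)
We show separately that both terms $M_xI_\theta^\epsilon\xi$ and $I_\theta^\epsilon M_x\xi$ in Definition~\ref{def-commutator} lie in $L_2(\qt)$. Put $p=\frac{d}{1-\epsilon}$. Since $0<\epsilon<1$, we have $p>d\geq2$. Adding a constant $c$ to $x$ changes neither term's membership in $L_2$: the terms change by $cI_\theta^\epsilon\xi$ and $I_\theta^\epsilon(c\xi)$, both in $\mathcal S(\qt)$. So we may assume $\widehat x(0)=0$. The Poincar\'e inequality \eqref{eq:distribution-Poincare} then gives $x\in L_p(\qt)\subset L_2(\qt)$. By definition, $D_jx\in L_p(\qt)\subset L_2(\qt)$ for every $j$.

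\emph{The term $M_xI_\theta^\epsilon\xi$.} Since $\xi\in\mathcal S(\qt)$, we have $I_\theta^\epsilon\xi\in\mathcal S(\qt)\subset L_\infty(\qt)$. Hence $xI_\theta^\epsilon\xi\in L_2(\qt)$, with norm at most $\|x\|_2\|I_\theta^\epsilon\xi\|_\infty$. This element realises the distribution in \eqref{eq:dis}, since $\tau(\eta^*xI_\theta^\epsilon\xi)$ is its $L_2$ pairing with $\eta$.

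\emph{The term $I_\theta^\epsilon M_x\xi$.} Here $x\xi\in L_2(\qt)$, with Fourier coefficients $c_n=\widehat{x\xi}(n)$. By \eqref{eq:dis}, $I_\theta^\epsilon M_x\xi$ is the distribution with coefficients $a_\epsilon(n)c_n$. Because $0<\epsilon<1$, we have $a_\epsilon(n)^2=|n|^{2\epsilon}\leq 1+|n|^2$. By Plancherel \eqref{Plancherel-qt}, it therefore suffices to show $x\xi\in W^1_2(\qt)$, that is, $\sum_n|n|^2|c_n|^2<\infty$. For this we use the Leibniz rule
\[
 D_j(x\xi)=(D_jx)\,\xi+x\,D_j\xi .
\]
The right-hand side lies in $L_2(\qt)$, because $D_jx,x\in L_2$ and $\xi,D_j\xi\in L_\infty$. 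Summing over $j$ then gives $\sum_n|n|^2|c_n|^2=\sum_j\|D_j(x\xi)\|_2^2<\infty$.

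\emph{Main obstacle: the Leibniz rule for a distribution $x$.} I would justify it by approximation. Let $x_N$ be Fej\'er (or another summability kernel) means of $x$; these are polynomials, and the kernel is positive and commutes with $D_j$. Then $x_N\to x$ and $D_jx_N=(D_jx)_N\to D_jx$ in $L_p$, hence in $L_2$. For polynomials, $D_j$ is a derivation, so $D_j(x_N\xi)=(D_jx_N)\xi+x_ND_j\xi$. The right-hand side converges in $L_2$ to $(D_jx)\xi+xD_j\xi$, while $x_N\xi\to x\xi$ in $L_2$. Since $D_j$ is closed (equivalently, since Fourier coefficients converge), the identity passes to the limit. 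A direct alternative is to compute Fourier coefficients with the cocycle \eqref{eq:cocycle}: $\widehat{x\xi}(n)=\sum_k\sigma(k,n-k)\widehat x(k)\widehat\xi(n-k)$ and $n_j=k_j+(n-k)_j$, which gives the same identity.
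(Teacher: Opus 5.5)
Your proof is correct and follows essentially the same route as the paper's. Both arguments normalise $\tau(x)=0$ and use Poincar\'e to get $x\in L_{\frac{d}{1-\epsilon}}(\qt)\subset L_2(\qt)$. Both then deduce $x\xi\in W^1_2(\qt)$ from the Leibniz rule, control $|n|^{2\epsilon}$ by $|n|^2$ (you add a constant) in Plancherel, and note that $xI_\theta^\epsilon\xi\in L_2(\qt)$ trivially. Your justification of the Leibniz rule for the distribution $x$, by Fej\'er approximation or by the cocycle computation, is an extra detail that the paper simply asserts.
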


\begin{proof}
We may again assume that $\tau(x)=0$. We have
$\frac{d}{1-\epsilon}>2$. The Poincar\'e inequality
\eqref{eq:distribution-Poincare} gives
$x\in L_{\frac{d}{1-\epsilon}}(\qt)$. For $\xi\in\mathcal S(\qt)$,
\[
 D_j(x\xi)=(D_jx)\xi+xD_j\xi,
 \qquad 1\leq j\leq d.
\]
Since $\xi$ and $D_j\xi$ are bounded and
$L_{\frac{d}{1-\epsilon}}(\qt)\subset L_2(\qt)$, both $x\xi$ and its first derivatives
belong to $L_2(\qt)$. More explicitly,
\[
 \|D_j(x\xi)\|_2
 \leq\|D_jx\|_2\|\xi\|_\infty
       +\|x\|_2\|D_j\xi\|_\infty<\infty.
\]
Thus $y=x\xi\in W_2^1(\qt)$. By \eqref{eq:a-epsilon} and
Plancherel's identity \eqref{Plancherel-qt},
\[
 \|I_\theta^\epsilon y\|_2^2
 =\sum_{n\neq0}|n|^{2\epsilon}|\widehat y(n)|^2
 \leq\sum_{n\neq0}|n|^2|\widehat y(n)|^2
 =\sum_{j=1}^d\|D_jy\|_2^2<\infty.
\]
Also, $I_\theta^\epsilon\xi\in\mathcal S(\qt)$, so
$xI_\theta^\epsilon\xi\in L_2(\qt)$. Subtracting the two terms
proves the assertion.
\end{proof}

\subsection{A Hilbert--Schmidt criterion}
\label{subsec:HS-criterion}

We now consider $\epsilon\leq-\frac{d}{2}$. The Fourier matrix
gives the following criterion, with constants independent of
$\theta$.

\begin{prop}\label{prop:HS}
Let $x\in L_1(\qt)$. If $\epsilon<-\frac{d}{2}$, then
\[
 [I_\theta^\epsilon,M_x]\in\mathcal L_2
 \quad\Longleftrightarrow\quad x-\tau(x)\in L_2(\qt).
\]
When these conditions hold,
\[
 \|[I_\theta^\epsilon,M_x]\|_{\mathcal L_2}
 \approx
 \|x-\tau(x)\|_{L_2(\qt)}.
\]
If $\epsilon=-\frac{d}{2}$, then
\[
 [I_\theta^\epsilon,M_x]\in\mathcal L_2
 \quad\Longleftrightarrow\quad
 \sum_{k\ne0}\log(1+|k|)|\widehat x(k)|^2<\infty.
\]
When these conditions hold,
\[
 \|[I_\theta^\epsilon,M_x]\|_{\mathcal L_2}
 \approx
 \left(
   \sum_{k\ne0}
   \log(1+|k|)|\widehat x(k)|^2
 \right)^{\frac{1}{2}}.
\]
The constants depend only on $d$ and $\epsilon$.
\end{prop}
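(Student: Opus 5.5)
The plan is to compute the Hilbert--Schmidt norm directly from the Fourier matrix in Lemma~\ref{lem:matrix}. Since $\{U^n\}_{n\in\mathbb Z^d}$ is an orthonormal basis of $L_2(\qt)$ and $|\sigma|=1$, the distributional commutator extends to a Hilbert--Schmidt operator if and only if its matrix is square summable. In that case
\[
 \|[I_\theta^\epsilon,M_x]\|_{\mathcal L_2}^2
 =\sum_{m,n}|\widehat x(m-n)|^2|a_\epsilon(m)-a_\epsilon(n)|^2
 =\sum_{k\neq0}|\widehat x(k)|^2\,S_\epsilon(k),
 \qquad
 S_\epsilon(k)=\sum_{n\in\mathbb Z^d}|a_\epsilon(n+k)-a_\epsilon(n)|^2 ,
\]
after substituting $m=n+k$; the $k=0$ terms vanish. (If the matrix is square summable, it defines an $\mathcal L_2$ operator agreeing with the commutator on polynomials, hence on $\mathcal S(\qt)$ by continuity into $\mathcal S'$. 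Conversely, an $\mathcal L_2$ extension has these matrix entries.) The whole proposition therefore reduces to the two-sided estimate
\[
 S_\epsilon(k)\approx_{d,\epsilon}1\ \ (\epsilon<-\tfrac d2),\qquad
 S_{-d/2}(k)\approx_d\log(1+|k|),\qquad k\neq0.
\]
This estimate is independent of $\theta$ and of $x$, which explains why the constants depend only on $d,\epsilon$. Note also that $\|x-\tau(x)\|_2^2=\sum_{k\ne0}|\widehat x(k)|^2$ by \eqref{Plancherel-qt}.

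For the upper bound, I use $|a-b|^2\le 2a^2+2b^2$, which gives $S_\epsilon(k)\le 4\sum_{n\ne0}|n|^{2\epsilon}$. This is finite when $2\epsilon<-d$. For the lower bound, the terms $n=0$ and $n=-k$ alone contribute $2|k|^{2\epsilon}$, which is too small. So instead I keep the terms with $|n|\le |k|/2$, $n\neq0$: there $|n+k|\ge|k|/2\ge|n|$, and $|n|^\epsilon-|n+k|^\epsilon\ge |n|^\epsilon(1-2^{\epsilon}\cdot\ldots)$. More cleanly, I take $n$ ranging over the unit vectors $e_j$ with $|k|\ge 4$. Then $a_\epsilon(n)=1$ and $a_\epsilon(n+k)\le 3^{\epsilon}<1$, and small $k$ is handled by the single term $n=0$ or $n=-k$, which gives $|k|^{2\epsilon}\gtrsim1$. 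Hence $S_\epsilon(k)\gtrsim1$ uniformly in $k\neq0$.

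The main obstacle is the endpoint $\epsilon=-\frac d2$, where $\sum|n|^{-d}$ diverges logarithmically and the truncation at scale $|k|$ has to be made precise. For the upper bound I split the sum into three regions: $0<|n|\le 2|k|$, the symmetric region $0<|n+k|\le2|k|$, and the region where both $|n|,|n+k|>2|k|$. In the first two regions I bound by $2|n|^{-d}+2|n+k|^{-d}$, and each is summed over a ball of radius $\lesssim|k|$ to give $\lesssim\log(1+|k|)$. In the far region I use the mean value theorem, $||n+k|^{-d/2}-|n|^{-d/2}|\lesssim|k||n|^{-d/2-1}$, so the sum is $\lesssim|k|^2\sum_{|n|>2|k|}|n|^{-d-2}\lesssim1$. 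For the lower bound I restrict to $1\le|n|\le|k|/4$. There $|n+k|\ge\frac34|k|\ge3|n|$, so $|n|^{-d/2}-|n+k|^{-d/2}\ge(1-3^{-d/2})|n|^{-d/2}$, and the sum is $\gtrsim\sum_{1\le|n|\le|k|/4}|n|^{-d}\gtrsim\log(1+|k|)$ for $|k|$ large. Bounded $|k|$ is again covered by the $n=-k$ term. Combining these bounds with the displayed identity gives both equivalences and the norm comparisons.
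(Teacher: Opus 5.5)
Your proposal is correct and follows the paper's proof essentially step by step. It uses the same reduction through the Fourier matrix to $\rho_\epsilon(k)=\sum_{n}|a_\epsilon(n+k)-a_\epsilon(n)|^2$, the same upper bound $4\|a_\epsilon\|_{\ell_2}^2$, and at $\epsilon=-\frac d2$ the same lower bound over $1\le|n|\le|k|/4$ together with the same near/far splitting with the mean value theorem.

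The one deviation is your uniform lower bound for $\epsilon<-\frac d2$ from the single term $n=e_1$, which gives $(1-3^{\epsilon})^2$ when $|k|\geq4$. This is a more explicit replacement for the paper's argument via weak convergence of translates, $\rho_\epsilon(k)\to2\|a_\epsilon\|_{\ell_2}^2$. One small point: deducing $x-\tau(x)\in L_2(\qt)$ from $\sum_{k\neq0}|\widehat x(k)|^2<\infty$ when $x\in L_1(\qt)$ requires Fourier uniqueness, which the paper spells out.
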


\begin{proof}
For $k\in\mathbb Z^d$, put
\[
 \rho_\epsilon(k)
 =\sum_{n\in\mathbb Z^d}
   |a_\epsilon(n+k)-a_\epsilon(n)|^2.
\]
In particular, $\rho_\epsilon(0)=0$.
Here $a_\epsilon$ is defined in \eqref{eq:a-epsilon}.
For fixed $k\neq0$ and $|n|>2|k|$, the mean value theorem gives
\[
 |a_\epsilon(n+k)-a_\epsilon(n)|
 \lesssim_\epsilon |k|\,|n|^{\epsilon-1}.
\]
Hence the defining series converges.
By Lemma~\ref{lem:matrix} and $|\sigma(m-n,n)|=1$,
\begin{align*}
 \sum_{m,n\in\mathbb Z^d}
 \big|\langle U^m,[I_\theta^\epsilon,M_x]U^n\rangle\big|^2
 &=
 \sum_{m,n\in\mathbb Z^d}
 |\widehat x(m-n)|^2
 |a_\epsilon(m)-a_\epsilon(n)|^2\\
&=\sum_{k\ne0}
   |\widehat x(k)|^2\rho_\epsilon(k).
\end{align*}
The commutator has a Hilbert--Schmidt extension if and only if
this sum is finite. If the sum is finite, the matrix defines
such an extension. Every extension has the Fourier matrix in
\eqref{eq:matrix}. In that case,
\[
 \|[I_\theta^\epsilon,M_x]\|_{\mathcal L_2}^2
 =\sum_{k\ne0}
   |\widehat x(k)|^2\rho_\epsilon(k).
\]

Assume first that $\epsilon<-\frac{d}{2}$. Then
$a_\epsilon\in\ell_2(\mathbb Z^d)$ and
\[
 \rho_\epsilon(k)
 =\|a_\epsilon(\cdot+k)-a_\epsilon\|_{\ell_2}^2
 \leq4\|a_\epsilon\|_{\ell_2}^2.
\]
Since translates of an $\ell_2$ sequence converge weakly to zero
as $|k|\to\infty$,
\[
 \rho_\epsilon(k)\longrightarrow
 2\|a_\epsilon\|_{\ell_2}^2.
\]
For every $k\neq0$, the term $n=0$ gives
\[
 \rho_\epsilon(k)\geq|a_\epsilon(k)|^2>0.
\]
Together with the preceding limit, this yields
\begin{equation}\label{eq:rho-strict}
 \rho_\epsilon(k)\approx_{d,\epsilon}1,
 \qquad k\neq0.
\end{equation}

Now let $\epsilon=-\frac{d}{2}$ and $K=|k|\geq8$.
If $1\leq|n|\leq\frac{K}{4}$, then
\[
 |n+k|\geq\frac{3K}{4}\geq3|n|,
\]
so
\[
 \big||n+k|^{-\frac{d}{2}}-|n|^{-\frac{d}{2}}\big|^2
 \gtrsim_d |n|^{-d}.
\]
Since
\[
 \sum_{1\leq|n|\leq R}|n|^{-d}
 \approx_d\log(2+R),
 \qquad R\geq2,
\]
we obtain
\begin{equation}\label{eq:rho-endpoint-lower}
 \rho_{-\frac{d}{2}}(k)\gtrsim_d\log(2+K).
\end{equation}

For the upper bound, split the sum into $|n|\leq2K$ and
$|n|>2K$. On the first region,
\[
 |a_{-\frac{d}{2}}(n+k)-a_{-\frac{d}{2}}(n)|^2
 \leq
 2|a_{-\frac{d}{2}}(n+k)|^2
 +2|a_{-\frac{d}{2}}(n)|^2.
\]
As $|n+k|\leq3K$ on this region, its contribution is at most
\[
 4\sum_{1\leq|n|\leq3K}|n|^{-d}
 \lesssim_d\log(2+K).
\]
If $|n|>2K$, then $|n+tk|\geq\frac{|n|}{2}$ for
$0\leq t\leq1$. The mean value theorem therefore gives
\[
 |a_{-\frac{d}{2}}(n+k)-a_{-\frac{d}{2}}(n)|
 \lesssim_d K|n|^{-\frac{d}{2}-1}.
\]
Hence
\[
 \sum_{|n|>2K}
 |a_{-\frac{d}{2}}(n+k)-a_{-\frac{d}{2}}(n)|^2
 \lesssim_d
 K^2\sum_{|n|>2K}|n|^{-d-2}
 \lesssim_d1.
\]
For $0<|k|<8$, the same tail estimate gives finiteness, while
the term $n=0$ gives strict positivity. Enlarging the constants
to include these finitely many $k$, we obtain
\begin{equation}\label{eq:rho-endpoint}
 \rho_{-\frac{d}{2}}(k)\approx_d\log(2+|k|),
 \qquad k\neq0.
\end{equation}

For the first assertion, suppose
$(\widehat x(k))_{k\neq0}\in\ell_2$.
Since the monomials $U^k$ are orthonormal, the sums
\[
 y_R=\sum_{0<|k|\leq R}\widehat x(k)U^k
\]
converge in $L_2(\qt)$ to an element $y$ with
\[
 \widehat y(0)=0,
 \qquad
 \widehat y(k)=\widehat x(k),\quad k\neq0.
\]
Then $x-\tau(x)-y\in L_1(\qt)$ has all Fourier coefficients
equal to zero, so Fourier uniqueness gives $x-\tau(x)=y$.
Conversely, if $x-\tau(x)\in L_2(\qt)$, Plancherel's identity
\eqref{Plancherel-qt} gives
\[
 \sum_{k\ne0}|\widehat x(k)|^2
 =\|x-\tau(x)\|_{L_2(\qt)}^2.
\]
The first estimate now follows from \eqref{eq:rho-strict}.
The second follows from \eqref{eq:rho-endpoint}, since
$\rho_{-\frac{d}{2}}(0)=0$ and
\[
 \log(2+|k|)\approx\log(1+|k|),
 \qquad k\neq0.
\]
\end{proof}

\begin{remark}\label{rem:Euclidean-comparison}
For $-d<\epsilon\leq-\frac{d}{2}$,
\cite[Lemma~3.3]{FSZ2024} shows that the commutator
$[(-\Delta)^{\frac{\epsilon}{2}},M_f]$ has no bounded extension
for any nonzero compactly supported $f\in L_1(\mathbb R^d)$.
This obstruction comes from the decay of the Riesz kernel
at infinity. On $\qt$, however, $I_\theta^\epsilon$ is compact
with norm one for every $\epsilon<0$. Hence, for
$x\in L_\infty(\qt)$,
\[
 [I_\theta^\epsilon,M_x]\in\mathcal K(L_2(\qt)),
 \qquad
 \|[I_\theta^\epsilon,M_x]\|_\infty\leq2\|x\|_\infty.
\]

The Sobolev upper estimate in Theorem~\ref{mta} fails even
when $\theta=0$ if
\[
d\geq3,\qquad 1-d<\epsilon\leq-\frac d2.
\]
See Example~\ref{ex:lower-order}.
\end{remark}

\begin{example}\label{ex:lower-order}
Let $d\geq3$ and $1-d<\epsilon\leq-\frac{d}{2}$.
There is a sequence
$(x_N)_{N\geq1}\subset C^\infty(\mathbb T^d)$ with
$\tau(x_N)=0$ such that
\[
 \sup_{N\geq1}
 \|x_N\|_{\dot W_{\frac{d}{1-\epsilon}}^1}<\infty,
 \qquad
 \|[I_0^\epsilon,M_{x_N}]\|_{\mathcal L_2}
 \longrightarrow\infty.
\]
There also exists
$x\in\dot W_{\frac{d}{1-\epsilon}}^1(\mathbb T^d)$
whose commutator has no Hilbert--Schmidt extension.
Thus the conclusions of Theorems~\ref{mta} and~\ref{mtb}
cannot extend to every element of
$\dot W_{\frac{d}{1-\epsilon}}^1(\mathbb T^d)$
in this range.
\end{example}

\begin{proof}
Identify $\mathbb T^d$ with
$\mathbb R^d/(2\pi\mathbb Z)^d$, equipped with normalized
Haar measure. Fix a nonzero
$\varphi\in C_c^\infty((-1,1)^d)$.
For $N\geq1$, set on $(-\pi,\pi]^d$
\[
 f_N(t)=N^{-\epsilon}\varphi(Nt),
 \qquad
 x_N=f_N-\tau(f_N).
\]
The function $f_N$ is zero near the boundary, so it defines
a smooth function on $\mathbb T^d$.
A change of variables gives
\[
 \big\|x_N \big\|_{\dot W_{\frac{d}{1-\epsilon}}^1}\approx1,
 \qquad
 \|x_N\|_2\approx N^{-\epsilon-\frac d2}.
\]
If $\epsilon<-\frac{d}{2}$, the second quantity tends to infinity.
Proposition~\ref{prop:HS} then gives
\[
 \|[I_0^\epsilon,M_{x_N}]\|_{\mathcal L_2}
 \longrightarrow\infty.
\]

At $\epsilon=-\frac{d}{2}$, for $k\neq0$,
\[
 \widehat{x_N}(k)
 =(2\pi)^{-d}N^{-\frac d2}
   \widehat\varphi\Big(\frac{k}{N}\Big),
 \qquad
 \widehat\varphi(\xi)
 =\int_{\mathbb R^d}\varphi(u)e^{-\ri u\cdot\xi}\,du.
\]
The function $\widehat\varphi$ is continuous and not
identically zero. Choose a closed cube
$K=\prod_{j=1}^d[a_j,b_j]$ of side length $\ell>0$,
disjoint from the origin, such that
$|\widehat\varphi|\geq c>0$ on $K$.
Put
\[
 r_0=\min_{\xi\in K}|\xi|>0.
\]
For all sufficiently large $N$,
\[
 \#\Big\{k\in\mathbb Z^d: \frac kN \in K\Big\}
 =\prod_{j=1}^d
   \bigl(\lfloor Nb_j\rfloor-\lceil Na_j\rceil+1\bigr)
 \geq\Big(\frac{\ell N}{2}\Big)^d.
\]
For these lattice points,
\[
 |k|\geq r_0N,
 \qquad
 |\widehat{x_N}(k)|^2
 \geq(2\pi)^{-2d}c^2N^{-d}.
\]
Therefore,
\begin{align*}
 \sum_{k\ne0}
 \log(1+|k|)|\widehat{x_N}(k)|^2
 &\geq(2\pi)^{-2d}c^2N^{-d}\log(1+r_0N)
       \#\Big\{k\in\mathbb Z^d: \frac kN \in K\Big\}\\
 &\gtrsim\log N.
\end{align*}
Using Proposition~\ref{prop:HS} again, we have
\[
 \|[I_0^\epsilon,M_{x_N}]\|_{\mathcal L_2}
 \longrightarrow\infty.
\]
Since $1<\frac{d}{1-\epsilon}<2$,
\[
 \|T\|_{\mathcal L_2}
 \leq C_{d,\epsilon}
       \|T\|_{\frac{d}{1-\epsilon},\infty}.
\]
Thus no uniform upper estimate can hold in terms of
$\|x_N\|_{\dot W_{\frac{d}{1-\epsilon}}^1}$.

We now show that some
$x\in\dot W_{\frac{d}{1-\epsilon}}^1(\mathbb T^d)$
has no Hilbert--Schmidt commutator extension.
Suppose otherwise. Then the linear map
\[
 W_{\frac{d}{1-\epsilon}}^1(\mathbb T^d)
 \longrightarrow\mathcal L_2,
 \qquad
 x\longmapsto[I_0^\epsilon,M_x],
\]
would be everywhere defined.
To check that its graph is closed, let
$x_j\to x$ in
$W_{\frac{d}{1-\epsilon}}^1(\mathbb T^d)$ and
$[I_0^\epsilon,M_{x_j}]\to T$ in $\mathcal L_2$.
The first convergence implies convergence in $L_1$, so
\[
 \widehat{x_j}(k)\longrightarrow\widehat{x}(k),
 \qquad k\in\mathbb Z^d.
\]
The second implies convergence of all matrix entries.
By \eqref{eq:matrix}, $T$ is a Hilbert--Schmidt extension
of $[I_0^\epsilon,M_x]$.
By the closed graph theorem, this map is bounded.
By the Poincar\'e inequality \eqref{eq:distribution-Poincare}, the mean-zero sequence $x_N$
is bounded in
$W_{\frac{d}{1-\epsilon}}^1(\mathbb T^d)$.
This contradicts the divergence of its commutator norms.

Finally, if a compact operator $T$ satisfies
\[
 \lim_{t\to\infty}
 t^{\frac{1-\epsilon}{d}}\mu(t,T)<\infty,
\]
then
$\mu(t,T)=O(t^{-\frac{1-\epsilon}{d}})$
as $t\to\infty$.
Since $\frac{d}{1-\epsilon}<2$, this implies
$T\in\mathcal L_2$.
Thus Theorem~\ref{mtb} cannot hold for the element $x$
obtained above.
\end{proof}

	\section{Proof of Theorem \ref{mta} }
	
	This section is devoted to the proof of Theorem \ref{mta}. The starting point is the following Cwikel type estimates quoted from \cite[Theorem~3.1]{MSX2019}.

	If $x\in L_p(\qt)$ and $g\in \ell_{p,\infty}(\mathbb{Z}^d)$ with $2< p <\infty$, then $M_x \, g(\nabla)$ is in $\mathcal{L}_{p,\infty} $ and
	\begin{equation}\label{ineq-cwikel}
		\|M_x\, g(\nabla)\|_{ \mathcal{L}_{p,\infty}} \leq c_{p,d} \|x\|_p \|g\|_{p,\infty}.
	\end{equation}
	
	The following lemma is easily deduced from \eqref{ineq-cwikel}.
	
	\begin{lem}\label{cwikel} Let $\alpha,\beta\in(0,\frac{d}{4})$ and let $p=\frac{d}{2(\alpha+\beta)}.$ For every $x\in L_p(\qt),$ we have
		$$\Big\|I_\theta^{-2\alpha}M_xI_\theta^{-2\beta}\Big\|_{\frac{d}{2(\alpha+\beta)},\infty}\leq c_{d,\alpha,\beta}\|x\|_{\frac{d}{2(\alpha+\beta)}}.$$
	\end{lem}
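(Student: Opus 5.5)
The plan is to factor the operator as $A\cdot B$ with $A=I_\theta^{-2\alpha}M_{u}$ and $B=M_{v}I_\theta^{-2\beta}$. Here $x=uv$ is a polar-type splitting of $x$. Each factor is then estimated by the Cwikel estimate \eqref{ineq-cwikel}, and the two bounds are combined with the weak-type H\"older inequality \eqref{weak-type-holder}.

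Set $p_1=\frac{d}{2\alpha}$ and $p_2=\frac{d}{2\beta}$. Since $\alpha,\beta<\frac d4$, both $p_1,p_2>2$, and $\frac1{p_1}+\frac1{p_2}=\frac{2(\alpha+\beta)}{d}=\frac1p$. Note also that $p=\frac{d}{2(\alpha+\beta)}>1$ but $p$ may be smaller than $2$, so \eqref{ineq-cwikel} cannot be applied directly to $x$; this is why the factorisation is needed.

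\emph{Step 1 (factorisation).} Write the polar decomposition $x=w|x|$ and set $u=w|x|^{p/p_1}$, $v=|x|^{p/p_2}$. Then $x=uv$, $\|u\|_{p_1}\le\|x\|_p^{p/p_1}$ and $\|v\|_{p_2}=\|x\|_p^{p/p_2}$. Hence $\|u\|_{p_1}\|v\|_{p_2}\le\|x\|_p$. Since $M_x=M_uM_v$, we get
\[
I_\theta^{-2\alpha}M_xI_\theta^{-2\beta}=\bigl(I_\theta^{-2\alpha}M_u\bigr)\bigl(M_vI_\theta^{-2\beta}\bigr).
\]
If $u,v$ are unbounded, I would first check this identity on $\mathcal S(\qt)$. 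Alternatively, I would prove the lemma for $x\in L_\infty(\qt)$ and pass to general $x\in L_p$ by density, using the closedness of the bound in $\mathcal L_{p,\infty}$ and weak convergence of the matrix entries.

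\emph{Step 2 (Cwikel bounds).} The symbol $g(n)=a_{-2\beta}(n)$ behaves like $|n|^{-2\beta}$ for $n\ne0$ and vanishes at $0$. Counting lattice points gives $\#\{n:|n|^{-2\beta}>s\}\lesssim s^{-d/(2\beta)}$, so $g\in\ell_{p_2,\infty}(\mathbb Z^d)$. Thus \eqref{ineq-cwikel} yields $\|M_vI_\theta^{-2\beta}\|_{p_2,\infty}\le c\|v\|_{p_2}$. For the other factor, take adjoints: $(I_\theta^{-2\alpha}M_u)^*=M_{u^*}I_\theta^{-2\alpha}$. By \eqref{singular-value-square} and \eqref{ineq-cwikel}, $\|I_\theta^{-2\alpha}M_u\|_{p_1,\infty}\le c\|u\|_{p_1}$.

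\emph{Step 3 (H\"older).} Apply \eqref{weak-type-holder} with $\frac1p=\frac1{p_1}+\frac1{p_2}$ to obtain
\[
\|I_\theta^{-2\alpha}M_xI_\theta^{-2\beta}\|_{p,\infty}\le c_{d,\alpha,\beta}\|u\|_{p_1}\|v\|_{p_2}\le c_{d,\alpha,\beta}\|x\|_p .
\]

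The only mildly delicate point is the justification of the factorisation for unbounded $x$. I expect to handle it by density of $L_\infty$ in $L_p$: the bound shows that the maps are Cauchy in $\mathcal L_{p,\infty}$, and the limit agrees with the operator defined through its Fourier matrix.
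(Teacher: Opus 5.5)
Your proposal is correct and is essentially the paper's argument. The paper writes $x=|x^*|^{\frac{\alpha}{\alpha+\beta}}u\,|x|^{\frac{\beta}{\alpha+\beta}}$, which is the same as your splitting $x=uv$, since $p/p_1=\frac{\alpha}{\alpha+\beta}$ and $w|x|^{\gamma}=|x^*|^{\gamma}w$. It then bounds the two outer factors by the Cwikel estimate \eqref{ineq-cwikel} and combines them with the weak-type H\"older inequality \eqref{weak-type-holder}. Your extra care in justifying the factorisation for unbounded $x$ by density is a point the paper leaves implicit.
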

	\begin{proof} By polar decomposition, there is a partial isometry $u\in L_{\infty}(\qt)$ such that
		$$x=|x^{\ast}|^{\frac{\alpha}{\alpha+\beta}}u|x|^{\frac{\beta}{\alpha+\beta}}.$$
		We now write
		$$I_\theta^{-2\alpha}M_xI_\theta^{-2\beta}=I_\theta^{-2\alpha}M_{|x^{\ast}|^{\frac{\alpha}{\alpha+\beta}}}\cdot M_u\cdot M_{|x|^{\frac{\beta}{\alpha+\beta}}}I_\theta^{-2\beta}.$$
		By H\"older inequality \eqref{weak-type-holder}, we have
		$$\Big\|I_\theta^{-2\alpha}M_xI_\theta^{-2\beta}\Big\|_{\frac{d}{2(\alpha+\beta)},\infty}\leq c_{\frac{d}{2\alpha},\frac{d}{2\beta}}^{(1)}\Big\|I_\theta^{-2\alpha}M_{|x^{\ast}|^{\frac{\alpha}{\alpha+\beta}}}\Big\|_{\frac{d}{2\alpha},\infty}\Big\|M_{|x|^{\frac{\beta}{\alpha+\beta}}}I_\theta^{-2\beta}\Big\|_{\frac{d}{2\beta},\infty}.$$
		By Cwikel estimate \eqref{ineq-cwikel}, we have
		$$\Big\|I_\theta^{-2\alpha}M_{|x^{\ast}|^{\frac{\alpha}{\alpha+\beta}}}\Big\|_{\frac{d}{2\alpha},\infty}\leq c_{d,\alpha}^{(2)}\||x^{\ast}|^{\frac{\alpha}{\alpha+\beta}}\|_{\frac{d}{2\alpha}}=c_{d,\alpha}^{(2)}\|x\|_{\frac{d}{2(\alpha+\beta)}}^{\frac{\alpha}{\alpha+\beta}},$$
		$$\Big\|M_{|x|^{\frac{\beta}{\alpha+\beta}}}I_\theta^{-2\beta}\Big\|_{\frac{d}{2\beta},\infty}\leq c_{d,\beta}^{(2)}\||x|^{\frac{\beta}{\alpha+\beta}}\|_{\frac{d}{2\beta}}=c_{d,\beta}^{(2)}\|x\|_{\frac{d}{2(\alpha+\beta)}}^{\frac{\beta}{\alpha+\beta}}.$$
		Thus,
		$$\Big\|I_\theta^{-2\alpha}M_xI_\theta^{-2\beta}\Big\|_{\frac{d}{2(\alpha+\beta)},\infty}\leq c_{\frac{d}{2\alpha},\frac{d}{2\beta}}^{(1)}c_{d,\alpha}^{(2)}c_{d,\beta}^{(2)}\|x\|_{\frac{d}{2(\alpha+\beta)}}.$$
	\end{proof}
	
	Let us continue the proof of Theorem \ref{mta}. Note that $1\otimes \big[ I_\theta^{\epsilon}, M_x\big] =\big[ | \D|^{\epsilon},1\otimes M_x\big]$, so we are reduced to the estimate of $\big[ | \D|^{\epsilon},1\otimes M_x\big]$. The advantage of considering the latter commutator is that it can be easily expressed as the double operator integrals (see \cite{PSW2002} and \cite{PS2009}) acting on the following operators:
	
	\begin{defi} Let $\epsilon<1$ and let $x\in \dot{W}^{1}_{\frac{d}{1-\epsilon}}(\qt)$. Denote
		$$X_{\epsilon}=|\D|^{\frac{\epsilon-1}{2}}[\D,1\otimes M_x]|\D|^{\frac{\epsilon-1}{2}},$$
		$$Y_\epsilon=\sum_{l=1}^{\lceil\frac{1-\epsilon}{2}\rceil-1} \big(|\D|^{\epsilon+l-1}[\D,1\otimes M_x]|\D|^{-l}+|\D|^{-l}[\D,1\otimes M_x]|\D|^{\epsilon+l-1}\big).$$
	\end{defi}

	\begin{lem}\label{commutator-Sob-1}
		Let $x\in \dot{W}^{1}_{\frac{d}{1-\epsilon}}(\qt)$. If $\epsilon\in(-\frac{d}{2},1),$ then
		$$\big\|X_{\epsilon}\big\|_{\frac{d}{1-\epsilon},\infty},\big\|Y_{\epsilon}\big\|_{\frac{d}{1-\epsilon},\infty}   \leq B_{\epsilon,d}  \|x\|_{\dot W_{\frac{d}{1-\epsilon}}^1}.$$
	\end{lem}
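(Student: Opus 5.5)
The plan is to reduce both estimates to Lemma~\ref{cwikel}. Since $[\D,1\otimes M_x]=\sum_{j=1}^d\gamma_j\otimes M_{D_jx}$, every term in $X_\epsilon$ and $Y_\epsilon$ is a finite sum of operators $\gamma_j\otimes (I_\theta^{-a}M_{D_jx}I_\theta^{-b})$ with $a,b\ge0$ and $a+b=1-\epsilon$. Here we use $|\D|=1\otimes I_\theta$, with negative powers taken as generalised inverses. Set $p=\frac{d}{1-\epsilon}$ and $y_j=D_jx\in L_p(\qt)$. Tensoring with the $\mathbf d\times\mathbf d$ matrix $\gamma_j$ multiplies weak Schatten quasi-norms by at most a constant depending on $d$. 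It therefore suffices to show
\[
\|I_\theta^{-a}M_{y}I_\theta^{-b}\|_{p,\infty}\lesssim_{d,a,b}\|y\|_p,\qquad a+b=1-\epsilon,
\]
for each pair $(a,b)$ that occurs. Summing over $j$ and $l$, with the quasi-triangle inequality for the finitely many terms, then gives the bound with a constant $B_{\epsilon,d}$.

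For $X_\epsilon$ we have $a=b=\frac{1-\epsilon}{2}$. When $\epsilon<1$ and $\epsilon>-\frac d2$, this gives $0<\frac{1-\epsilon}{4}<\frac{d}{4}$ for $d\ge2$; the upper bound is equivalent to $\epsilon>1-d$, which holds. Lemma~\ref{cwikel} with $2\alpha=2\beta=\frac{1-\epsilon}{2}$ then applies directly. The requirement $p>2$ in \eqref{ineq-cwikel} becomes $\frac{d}{1-\epsilon}$ split into the two factors of exponents $\frac{d}{2\alpha}=\frac{2d}{1-\epsilon}$. These factors exceed $2$ exactly when $\epsilon>1-d$, so that is fine too.

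For $Y_\epsilon$ the terms have $(a,b)=(1-\epsilon-l,\,l)$ and its mirror, for $1\le l\le\lceil\frac{1-\epsilon}{2}\rceil-1$. This sum is nonempty only if $\epsilon<-1$, and then $1\le l<\frac{1-\epsilon}{2}$, so both $a$ and $b$ are positive. I would apply Lemma~\ref{cwikel} with $2\alpha=a$ and $2\beta=b$. That needs $a,b<\frac d2$, i.e.\ the Cwikel factor exponents $\frac da,\frac db$ must exceed $2$.

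Here the hypothesis $\epsilon>-\frac d2$ enters: $a,b\le 1-\epsilon-1=-\epsilon<\frac d2$. The main obstacle is to make sure the exponents fit into \eqref{ineq-cwikel} in every case. If some factor had exponent $\le2$, I would instead peel off a bounded factor $I_\theta^{-c}$, $c\ge0$ (which has norm $\le1$), and redistribute the powers. The weak Hölder inequality \eqref{weak-type-holder} together with the polar decomposition trick of Lemma~\ref{cwikel} covers the remaining cases.

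Finally, since $x$ is only defined modulo constants and only $D_jx$ enters, no Poincaré step is needed. The operators are defined on $\mathcal S(\qt)$ and extend by the above bounds, so the estimates are a priori valid.
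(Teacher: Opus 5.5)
Your proposal is correct and follows the paper's proof. Both write $X_\epsilon$ and $Y_\epsilon$ as finite sums of $\gamma_j\otimes I_\theta^{-a}M_{D_jx}I_\theta^{-b}$ with $a+b=1-\epsilon$ and apply Lemma~\ref{cwikel} with $2\alpha=a$, $2\beta=b$. Both verify $0<a,b<\frac{d}{2}$ the same way: via $\epsilon>1-d$ for $X_\epsilon$, and via $a,b\geq1$, $a+b=1-\epsilon$, $\epsilon>-\frac{d}{2}$ for $Y_\epsilon$.

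Your fallback about peeling off a bounded factor $I_\theta^{-c}$ is never needed, since your own check covers every term. As stated it could not help anyway: discarding $I_\theta^{-c}$ lowers the total order below $1-\epsilon$, so the resulting bound would need $D_jx$ in $L_q$ for some $q>\frac{d}{1-\epsilon}$.
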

	\begin{proof} Denote
		$$X_{\epsilon,j}=I_\theta^{\frac{\epsilon-1}{2}}M_{D_j x}I_\theta^{\frac{\epsilon-1}{2}},$$
		$$Y_{\epsilon,j,l}=I_\theta^{\epsilon+l-1}M_{D_jx}I_\theta^{-l}+I_\theta^{-l}M_{D_jx}I_\theta^{\epsilon+l-1},\quad 1\leq l<\lceil\frac{1-\epsilon}{2}\rceil.$$
		
		Since $\epsilon\in(1-d,1),$ it follows that
		$$0>\frac{\epsilon-1}{4}>-\frac{d}{4},\quad\frac{\epsilon-1}{4}+\frac{\epsilon-1}{4}=\frac{\epsilon-1}{2}.$$
		Applying Lemma \ref{cwikel} with $\alpha=\beta=\frac{1-\epsilon}{4},$ we conclude that
		$$\|X_{\epsilon,j}\|_{\frac{d}{1-\epsilon},\infty}\leq c_{d,\frac{1-\epsilon}{4},\frac{1-\epsilon}{4}}\|D_jx\|_{\frac{d}{1-\epsilon}}.$$
		Since $\epsilon\in(-\frac{d}{2},1),$ it follows that
		$$0>\frac{\epsilon+l-1}{2}>-\frac{d}{4},\quad 0>-\frac{l}{2}>-\frac{d}{4},\quad \frac{\epsilon+l-1}{2}+\frac{-l}{2}=\frac{\epsilon-1}{2},\quad 1\leq l<\lceil\frac{1-\epsilon}{2}\rceil.$$
		Applying Lemma \ref{cwikel} with $\alpha=-\frac{\epsilon+l-1}{2},$ $\beta=\frac{l}{2},$ we conclude that
		$$\|Y_{\epsilon,j,l}\|_{\frac{d}{1-\epsilon},\infty}\leq 4c_{d,-\frac{\epsilon+l-1}{2},\frac{l}{2}}\|D_jx\|_{\frac{d}{1-\epsilon}}.$$
		
		Note that
		$$X_{\epsilon}=\sum_{1\leq j\leq d}   \gamma_j \otimes  X_{\epsilon,j},\quad Y_{\epsilon}=\sum_{1\leq j\leq d}\sum_{l=1}^{\lceil\frac{1-\epsilon}{2}\rceil-1}  \gamma_j \otimes  Y_{\epsilon,j,l}.$$
		By the triangle inequality, we have
		$$\|X_{\epsilon}\|_{\frac{d}{1-\epsilon},\infty}\leq c_{\frac{d}{1-\epsilon}}\sum_{1\leq j\leq d}\|X_{\epsilon,j}\|_{\frac{d}{1-\epsilon},\infty}\leq dc_{\frac{d}{1-\epsilon}}c_{d,\frac{1-\epsilon}{4},\frac{1-\epsilon}{4}}\sum_{j=1}^d\|D_jx\|_{\frac{d}{1-\epsilon}}\leq c_{d,\epsilon}\|x\|_{\dot{W}^1_{\frac{d}{1-\epsilon}}},$$
\begin{align*}
 \|Y_{\epsilon}\|_{\frac{d}{1-\epsilon},\infty}
 &\leq c_{\frac{d}{1-\epsilon}}
       \sum_{1\leq j\leq d}
       \sum_{l=1}^{\lceil\frac{1-\epsilon}{2}\rceil-1}
       \|Y_{\epsilon,j,l}\|_{\frac{d}{1-\epsilon},\infty}\\
 &\leq 4c_{\frac{d}{1-\epsilon}}
       \sum_{1\leq j\leq d}
       \sum_{l=1}^{\lceil\frac{1-\epsilon}{2}\rceil-1}
       c_{d,-\frac{\epsilon+l-1}{2},\frac{l}{2}}
       \|D_jx\|_{\frac{d}{1-\epsilon}}\\
 &\leq c_{d,\epsilon}\|x\|_{\dot{W}^1_{\frac{d}{1-\epsilon}}}.
\end{align*}
	\end{proof}
	
	Next, set
	\begin{equation}\label{Psi-ep}
		\Psi_\epsilon(\lambda,\mu)=\frac{|\lambda|^{\epsilon}-|\mu|^{\epsilon}}{\lambda-\mu }\cdot |\lambda|^{ \frac{1-\epsilon}{2}}|\mu|^{\frac{1-\epsilon}{2}},\quad \lambda,\mu\in\mathbb{R}.
	\end{equation}
 For a real valued function $\phi: \mathbb{R}\rightarrow \mathbb{R}$, denote
 $$\phi^{[1]}(\lambda, \mu)  =  \frac{\phi(\lambda)-\phi(\mu)}{\lambda-\mu}.$$

	\begin{lem}\label{bounded-Psi} Let $\epsilon\in[-1,1)$ and $1<p<\infty.$ For every self-adjoint (potentially unbounded) $A$, the operator $T_{\Psi_\epsilon}^A:\mathcal{L}_{p,\infty}\to \mathcal{L}_{p,\infty}$ is bounded.
	\end{lem}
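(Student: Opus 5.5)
The approach is to exploit the homogeneity of $\Psi_\epsilon$: it is homogeneous of degree $0$, since $\Psi_\epsilon(t\lambda,t\mu)=\Psi_\epsilon(\lambda,\mu)$ for $t>0$. It therefore depends only on the signs of $\lambda,\mu$ and on $s=\log|\lambda|-\log|\mu|$. Using the convention \eqref{eq:a-epsilon}, the factor $|\lambda|^{\frac{1-\epsilon}{2}}|\mu|^{\frac{1-\epsilon}{2}}$ vanishes when $\lambda\mu=0$, so I set $\Psi_\epsilon=0$ on the axes. Let $E$ be the spectral measure of $A$, and put $P_\pm=E(\pm(0,\infty))$. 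Then
\[
 T^A_{\Psi_\epsilon}(X)=\sum_{\sigma,\sigma'\in\{\pm\}}T^A_{\Psi_\epsilon\chi_{\sigma}\otimes\chi_{\sigma'}}(P_\sigma XP_{\sigma'}),
\]
and it suffices to treat each of the four sign blocks. Compressions by projections are contractions on $\mathcal L_{p,\infty}$, so this splitting costs nothing.

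\emph{Same-sign blocks.} Take $\lambda=\pm e^a$ and $\mu=\pm e^b$ with the same sign. A direct computation gives
\[
 \Psi_\epsilon(\lambda,\mu)=\pm\frac{\sinh(\epsilon s/2)}{\sinh(s/2)},\qquad s=a-b.
\]
For $|\epsilon|<1$ the function $h(s)=\sinh(\epsilon s/2)/\sinh(s/2)$ is smooth and decays exponentially together with all its derivatives. Hence $h=\int_{\mathbb R}\widehat h(t)e^{\ri ts}\,dt$ with $\widehat h\in L_1(\mathbb R)$, and
\[
 \Psi_\epsilon(\lambda,\mu)=\int_{\mathbb R}\widehat h(t)\,|\lambda|^{\ri t}|\mu|^{-\ri t}\,dt .
\]
The double operator integral on this block is then $X\mapsto\int\widehat h(t)\,|A|^{\ri t}P_\sigma XP_\sigma|A|^{-\ri t}\,dt$. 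This is bounded on every symmetric ideal, in particular on $\mathcal L_{p,\infty}$ (quasi-Banach, but $p>1$ gives an equivalent norm), with norm at most $\|\widehat h\|_{L_1}$. For $\epsilon=-1$ we have $h\equiv-1$, which is trivially fine.

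\emph{Opposite-sign blocks.} For $\lambda=e^a>0>\mu=-e^b$ one obtains
\[
 \Psi_\epsilon=\frac{\sinh(\epsilon s/2)}{\cosh(s/2)} .
\]
For $-1<\epsilon<1$ this again decays exponentially, so the same Fourier-integral argument applies.

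\emph{The main obstacle: $\epsilon=-1$.} In this case the opposite-sign symbol is $-\tanh(s/2)$, which tends to $\mp1$ at $\pm\infty$ and is not the Fourier transform of an $L_1$ function. I would write
\[
 \tanh(s/2)=\operatorname{sign}(s)+\bigl(\tanh(s/2)-\operatorname{sign}(s)\bigr).
\]
The second summand is an odd function decaying exponentially, but it has a jump at $0$. It is cleaner to use $\tanh(s/2)-\tanh(s)$, which is smooth with exponential decay and so is handled as before. That reduces the problem to the multiplier $\tanh(\log|\lambda|-\log|\mu|)$, and this in turn reduces (modulo another $L_1$-Fourier piece) to $\operatorname{sign}(\log|\lambda|-\log|\mu|)$. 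The latter is a triangular truncation with respect to the spectral resolution of $|A|$, and it is bounded on $\mathcal L_q$ for every $1<q<\infty$ by the Macaev/Gohberg–Krein theorem. The Boyd indices of $\ell_{p,\infty}$ are both $\frac1p\in(0,1)$, so real interpolation (Calderón–Mityagin) between $\mathcal L_{q_0}$ and $\mathcal L_{q_1}$ with $q_0<p<q_1$ gives boundedness on $\mathcal L_{p,\infty}$. This is exactly where the restriction $1<p<\infty$ enters; ensuring the truncation is well defined for unbounded $A$ and vanishes on $\ker A$ is routine.

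Summing the four blocks completes the proof.
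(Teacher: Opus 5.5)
Your route differs from the paper's and is sound except for one step in the $\epsilon=-1$ case, which fails as written. You rightly reject $\tanh(s/2)-\operatorname{sign}(s)$ because of its jump at $0$. You then claim that $\tanh(s)$ reduces to $\operatorname{sign}(s)$ ``modulo another $L_1$-Fourier piece''. But $\tanh(s)-\operatorname{sign}(s)$ has exactly the same jump, from $1$ to $-1$. Every function $\int\widehat g(t)e^{\ri ts}\,dt$ with $\widehat g\in L_1(\mathbb R)$ is continuous, so no such representation exists, and the detour through $\tanh(s)$ gains nothing. The repair is to factor instead of subtract:
\[
 \tanh(s/2)=\operatorname{sign}(s)\bigl(1-g(s)\bigr),\qquad g(s)=\frac{2}{e^{|s|}+1}.
\]
The function $g$ is even, continuous and exponentially decaying, and it is smooth away from $0$ with only a corner there. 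Hence $g''=-\delta_0+(\text{an }L_1\text{ function})$, so $|\widehat g(t)|\lesssim(1+t^2)^{-1}$ is integrable. Since $|A|$ commutes with $P_\pm$, both factors are symbols relative to the spectral measure of $|A|$, and transformers relative to a fixed pair of spectral measures compose by multiplying symbols. The block operator is therefore the triangular truncation composed with $\mathrm{id}-\int\widehat g(t)|A|^{\ri t}(\cdot)|A|^{-\ri t}\,dt$. Your Macaev-plus-interpolation argument then finishes the proof.

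The paper avoids the sign decomposition altogether. With $\psi(\lambda)=|\lambda|$ it factors $\Psi_\epsilon(\lambda,\mu)=\Phi_\epsilon(|\lambda|,|\mu|)\,\psi^{[1]}(\lambda,\mu)$, so $T^A_{\Psi_\epsilon}=T^{|A|}_{\Phi_\epsilon}\circ T^A_{\psi^{[1]}}$. The first factor has the form $f_\epsilon(\lambda/\mu)$ with $f_\epsilon\circ\exp$ Schwartz. It is handled by \cite[Lemma~9]{PS2009}, which is the same Fourier representation you use on the same-sign blocks, and $\Phi_{-1}\equiv-1$. The second factor is bounded on $\mathcal L_{p,\infty}$ by the Schatten-class Lipschitz estimate for $|\cdot|$ \cite{PS2011} together with interpolation. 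On your opposite-sign blocks $\psi^{[1]}$ equals $\pm\tanh(s/2)$, so your triangular-truncation step is in effect a proof of that cited result for $\psi=|\cdot|$. The paper's factorization is shorter and uniform in $\epsilon$, but it uses the Macaev-type input, and hence the restriction $1<p<\infty$, for every $\epsilon$. Your decomposition gives more: for $|\epsilon|<1$ all four block symbols are Fourier transforms of $L_1$ functions of $s$. So $T^A_{\Psi_\epsilon}$ is already bounded on $\Bc(H)$ and on $\mathcal L_1$, and only $\epsilon=-1$ needs $1<p<\infty$.
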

	\begin{proof} Set $\psi(\lambda)=|\lambda|,$ $\lambda\in\mathbb{R},$ and
		$$\Phi_\epsilon(\lambda, \mu )=\frac{\lambda^{\epsilon}-\mu^{ \epsilon}}{\lambda-\mu}\cdot  \lambda^{\frac{1-\epsilon}{2}} \mu^{\frac{1-\epsilon}{2}},\quad \lambda,\mu>0.$$
		It is immediate that
		$$\Psi_{\epsilon}(\lambda,\mu)=\Phi_{\epsilon}(|\lambda|,|\mu|)\cdot\psi^{[1]}(\lambda,\mu),\quad \lambda,\mu\in\mathbb{R}.$$
		We, therefore, have
		$$T_{\Psi_\epsilon}^A=T_{\Phi_\epsilon}^{|A|}\circ T_{\psi^{[1]}}^A.$$
		
		Let $\epsilon\in(-1,1).$ Set
		$$f_{\epsilon}(t)=\frac{t^{\epsilon}-1}{t-1}\cdot t^{\frac{1-\epsilon}{2}},\quad t>0.$$
		Clearly, $f_{\epsilon}\circ\exp$ is a Schwartz function. Taking into account that
		$$\Phi_{\epsilon}(\lambda,\mu)=f_{\epsilon}(\frac{\lambda}{\mu}),\quad\lambda,\mu>0,$$
		we infer from Lemma 9 in \cite{PS2009} that $T_{\Phi_\epsilon}^{|A|}:\mathcal{L}_{\infty}\to \mathcal{L}_{\infty}$ is bounded. By duality and interpolation, $T_{\Phi_\epsilon}^{|A|}:\mathcal{L}_{p,\infty}\to \mathcal{L}_{p,\infty}$ is bounded.
		
		If $\epsilon=-1,$ then $\Phi_{\epsilon}=-1$ and we immediately infer that $T_{\Phi_\epsilon}^{|A|}:\mathcal{L}_{p,\infty}\to \mathcal{L}_{p,\infty}$ is bounded.
		
		It is well known (see e.g. \cite{PS2011}) that $T_{\psi^{[1]}}^A:\mathcal{L}_{p,\infty}\to \mathcal{L}_{p,\infty}$ is bounded.
		
		Combining all these facts, we complete the proof.
	\end{proof}

When $\epsilon<-1$, the operator discussed in the above lemma is not bounded anymore. We need the following decomposition for the function $\Psi_\epsilon$ introduced in \eqref{Psi-ep}.
	
	\begin{lem}\label{Psi-ep-decomp}
	 Let $\epsilon<-1$ and let $\delta=\epsilon-2+2\lceil\frac{1-\epsilon}{2}\rceil.$ We have
		$$|\lambda|^{\epsilon}-|\mu|^{\epsilon}=\Psi_{\delta}(\lambda,\mu)\cdot F_{\epsilon}(\lambda,\mu)-\psi^{[1]}(\lambda,\mu)\cdot G_{\epsilon}(\lambda,\mu),$$
		where
		$$F_{\epsilon}(\lambda,\mu)=|\lambda|^{\frac{\epsilon-1}{2}}|\mu|^{\frac{\epsilon-1}{2}}(\lambda-\mu),\quad\lambda,\mu\in\mathbb{R},$$
		$$G_{\epsilon}(\lambda,\mu)=\Big(\sum_{l=1}^{\lceil\frac{1-\epsilon}{2}\rceil-1}\big(|\lambda|^{\epsilon+l-1}|\mu|^{-l}+|\lambda|^{-l}|\mu|^{\epsilon+l-1}\big)\Big)(\lambda-\mu),\quad \lambda,\mu\in\mathbb{R}.$$
	\end{lem}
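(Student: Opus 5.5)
This is a purely algebraic identity, so the plan is to verify it pointwise in $(\lambda,\mu)$ by a telescoping argument. Write $N=\lceil\frac{1-\epsilon}{2}\rceil$, so $\delta=\epsilon-2+2N$. Since $\epsilon<-1$ we have $N\geq2$. From $\frac{1-\epsilon}{2}\leq N<\frac{1-\epsilon}{2}+1$ we get $\delta\in[-1,1)$, which is exactly the range of Lemma~\ref{bounded-Psi}; this explains the choice of $\delta$. On the locus $\lambda=\mu$ both sides vanish, since $F_\epsilon$ and $G_\epsilon$ carry the factor $\lambda-\mu$. Points with $\lambda=0$ or $\mu=0$ are handled by the same convention used for $|\D|^{-l}$ (generalised inverses, with the value zero there). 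So it suffices to treat $\lambda,\mu\neq0$ with $\lambda\neq\mu$, and to write $a=|\lambda|$, $b=|\mu|$.

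\textbf{First term.} Substituting \eqref{Psi-ep} with $\epsilon$ replaced by $\delta$ gives
\[
\Psi_\delta(\lambda,\mu)F_\epsilon(\lambda,\mu)=(a^\delta-b^\delta)\,(ab)^{\frac{1-\delta}{2}+\frac{\epsilon-1}{2}}.
\]
Since $\frac{1-\delta}{2}+\frac{\epsilon-1}{2}=\frac{\epsilon-\delta}{2}=1-N$, this equals
\[
a^{\epsilon+N-1}b^{1-N}-a^{1-N}b^{\epsilon+N-1}.
\]

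\textbf{Second term.} Using $\psi^{[1]}(\lambda,\mu)(\lambda-\mu)=a-b$, we have
\[
\psi^{[1]}G_\epsilon=(a-b)\sum_{l=1}^{N-1}\bigl(a^{\epsilon+l-1}b^{-l}+a^{-l}b^{\epsilon+l-1}\bigr).
\]
Expanding the product, the first family gives a telescoping sum:
\[
(a-b)\,a^{\epsilon+l-1}b^{-l}=a^{\epsilon+l}b^{-l}-a^{\epsilon+l-1}b^{1-l}.
\]
Summing over $l=1,\dots,N-1$ leaves $a^{\epsilon+N-1}b^{1-N}-a^{\epsilon}$.

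Symmetrically, the second family gives
\[
(a-b)\,a^{-l}b^{\epsilon+l-1}=a^{1-l}b^{\epsilon+l-1}-a^{-l}b^{\epsilon+l}.
\]
This telescopes to $b^{\epsilon}-a^{1-N}b^{\epsilon+N-1}$.

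\textbf{Combining.} Subtracting the second-term total from the first term,
\[
\bigl(a^{\epsilon+N-1}b^{1-N}-a^{1-N}b^{\epsilon+N-1}\bigr)-\bigl(a^{\epsilon+N-1}b^{1-N}-a^\epsilon+b^\epsilon-a^{1-N}b^{\epsilon+N-1}\bigr)=a^\epsilon-b^\epsilon,
\]
which is the claimed identity.

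The only delicate points are the exponent bookkeeping ($\frac{\epsilon-\delta}{2}=1-N$ and the endpoints of the two telescoping sums) and the conventions at $\lambda\mu=0$. I expect the latter to be the main, though minor, obstacle: it needs to be stated consistently with how these functions are later used as double operator integral symbols for $|\D|$, which has kernel $\mathbb C^{\mathbf d}\otimes\mathbb C1$.
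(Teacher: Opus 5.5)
Your telescoping computation is correct on $\{\lambda\mu\neq0\}$, and it is in substance the paper's proof. The paper sets $t=|\lambda|/|\mu|$, multiplies $1=t^{2n}t^{-n}-(t-1)\sum_{l=1}^n t^{l-1}$ by $t^\epsilon$, subtracts $1=t^{-n}+(t-1)\sum_{l=1}^n t^{-l}$ (with $n=N-1$), and rehomogenises. Your two telescoping sums and your exponent identity $\frac{\epsilon-\delta}{2}=1-N$ are the two-variable homogeneous form of those geometric-sum identities.

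The step that fails is your treatment of $\lambda\mu=0$: the generalised-inverse convention does not rescue the identity there. Take $\lambda=0\neq\mu$ and set $|0|^s:=0$ for $s<0$. The left-hand side is $-|\mu|^\epsilon\neq0$. On the right, $F_\epsilon(0,\mu)=0$ because $\frac{\epsilon-1}{2}<0$. Also $G_\epsilon(0,\mu)=0$, because every power of $|\lambda|$ in $G_\epsilon$ is negative: $-l<0$ and $\epsilon+l-1\leq\epsilon+N-2<\frac{\epsilon-1}{2}<0$. So the right-hand side is $0$. The case $\mu=0\neq\lambda$ is symmetric, and without a convention $|0|^\epsilon$ is not even finite.

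So you should state the lemma only for $\lambda\mu\neq0$, with $\lambda=\mu$ handled as you do. That is also all the paper's substitution $t>0$ covers. For the later application, note that $X_\epsilon$ and $Y_\epsilon$ vanish on $\ker\D=\mathbb C^{\mathbf d}\otimes\mathbb C1$ and have range in $(\ker\D)^\perp$. Hence the double operator integrals reproduce only the compression of $[|\D|^\epsilon,1\otimes M_x]$ to $(\ker\D)^\perp$. The two off-diagonal blocks between $\ker\D$ and its complement each have rank at most $\mathbf d$. They must be estimated separately rather than absorbed into a convention; their norms are controlled by the bounds in the proof of Proposition~\ref{prop:negative-order-boundedness}.
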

	\begin{proof} We have
		$$1=t^{2n}\cdot t^{-n}-(t-1)\sum_{l=1}^nt^{l-1},\quad 1=t^{-n}+(t-1)\sum_{l=1}^nt^{-l},\quad t>0.$$	
		Now, set $n=\lceil\frac{1-\epsilon}{2}\rceil-1$ and note that $\delta=\epsilon+2n.$ Multiplying the first equality by $t^{\epsilon}$ and subtracting them, we obtain	
		$$t^{\epsilon}-1=(t^{\delta}-1)  \cdot t^{ \frac{1-\delta}{2}}\cdot t^{\frac{\epsilon-1}{2}}-(t-1)\cdot \Big(\sum_{l=1}^{\lceil\frac{1-\epsilon}{2}\rceil-1}\big(t^{\epsilon+l-1}+t^{-l}\big)\Big),\quad t>0.$$
		Setting $t=\frac{|\lambda|}{|\mu|},$ we obtain	
		$$|\lambda|^{\epsilon}-|\mu|^{\epsilon}=(|\lambda|^{\delta}-|\mu|^{\delta})  \cdot |\lambda|^{ \frac{1-\delta}{2}}|\mu|^{\frac{1-\delta}{2}}\cdot |\lambda|^{\frac{\epsilon-1}{2}}|\mu|^{\frac{\epsilon-1}{2}}-$$
		$$-(|\lambda|-|\mu|)\cdot \Big(\sum_{l=1}^{\lceil\frac{1-\epsilon}{2}\rceil-1}\big(|\lambda|^{\epsilon+l-1}|\mu|^{-l}+|\lambda|^{-l}|\mu|^{\epsilon+l-1}\big)\Big).$$
		Dividing and multiplying the right hand side by $\lambda-\mu,$ we complete the proof.
	\end{proof}
	
	Under the convention that $G_{\epsilon}(\lambda,\mu)=0$ when $\epsilon\geq -1$, we may unify the decomposition for $|\lambda|^{\epsilon}-|\mu|^{\epsilon}$ for all $\epsilon<1$.

	We are now able to conclude the proof of Theorem \ref{mta}.
	
	\begin{proof}[Proof of Theorem \ref{mta}]
Let $ \delta=\epsilon-2+2\lceil\frac{1-\epsilon}{2}\rceil$, then $\delta\in[-1,1)$. So $ T^{\D}_{\Psi_{\delta}}:\mathcal{L}_{\frac{d}{1-\epsilon},\infty}\to \mathcal{L}_{\frac{d}{1-\epsilon},\infty}$ is bounded by Lemma \ref{bounded-Psi}. Thus,
$$\|T^{\D}_{\Psi_{\delta}}(X_{\epsilon})-T^{\D}_{\psi^{[1]}}(Y_{\epsilon})\|_{\frac{d}{1-\epsilon},\infty}\leq c_{d,\epsilon}\|x\|_{\dot{W}^{1}_{\frac{d}{1-\epsilon}}(\qt)}.$$
Next, we claim that
			$$[|\D|^{\epsilon},1\otimes M_x]=T^{\D}_{\Psi_{\delta}}(X_{\epsilon})-T^{\D}_{\psi^{[1]}}(Y_{\epsilon})$$
			on their common domain $\mathbb{C}^{\mathbf{d}}  \otimes \mathcal{S}(\qt)$. With this claim, we immediately conclude the proof of Theorem \ref{mta}.

			We are reduced to proving the claim, i.e. to proving the fact that
					$$[|\D|^{\epsilon},1\otimes M_x] \mathbf{e}\otimes \xi=\big(T^{\D}_{\Psi_{\delta}}(X_{\epsilon})-T^{\D}_{\psi^{[1]}}(Y_{\epsilon})\big) \mathbf{e}\otimes\xi$$
		in $\mathbb{C}^{\mathbf{d}}  \otimes \mathcal{S}'(\qt)$	for any  $\mathbf{e}\otimes\xi \in\mathbb{C}^{\mathbf{d}}  \otimes \mathcal{S}(\qt)$. Recall that
		$$X_{\epsilon}  =T_{F_{\epsilon}}^{\D}([\D,1\otimes M_x]), \quad \mbox{and} \quad  Y_{\epsilon}=T^{\D}_{G_{\epsilon}}([\D,1\otimes M_x]).$$
		So for any $\mathbf{f}\otimes\eta \in\mathbb{C}^{\mathbf{d}}  \otimes \mathcal{S}(\qt)$, we have
		$$\Big\langle    \big(T^{\D}_{\Psi_{\delta}}(X_{\epsilon})-T^{\D}_{\psi^{[1]}}(Y_{\epsilon})\big) \mathbf{e}\otimes \xi  ,   \mathbf{f}\otimes\eta   \Big\rangle    =\Big\langle  T^\D_{\Psi_{\delta}F_{\epsilon}-\psi^{[1]}G_{\epsilon}}([\D,1\otimes M_x])   \mathbf{e}\otimes \xi  ,   \mathbf{f}\otimes\eta  \Big\rangle$$
			But since $\xi, \eta \in \mathcal{S}(\qt)$ and $\Psi_{\delta}F_{\epsilon}-\psi^{[1]}G_{\epsilon}= |\lambda|^{\epsilon}-|\mu|^{\epsilon}$ by Lemma \ref{Psi-ep-decomp}, we deduce from the definition of double operator integral that
			$$\Big\langle  T^\D_{\Psi_{\delta}F_{\epsilon}-\psi^{[1]}G_{\epsilon}}([\D,1\otimes M_x])   \mathbf{e}\otimes \xi  ,   \mathbf{f}\otimes\eta  \Big\rangle =  \langle \mathbf{e} , \mathbf{f}\rangle  \Big(   \langle x\xi,   I_\theta^{\epsilon}\eta  \rangle  -       \langle x I_\theta^{\epsilon}\xi,  \eta  \rangle    \Big).$$
			This finishes the proof of the claim, whence the proof of Theorem \ref{mta}.
	\end{proof}

	\section{Spectral asymptotics of pseudodifferential operators}\label{sec:PDO-Weyl}

In Section~\ref{subsec:Cstar-Weyl}, we prove spectral asymptotics
for operators with continuous principal symbols.
In Section~\ref{subsec:classical-Weyl}, we apply these results
to selfadjoint classical pseudodifferential operators
of negative order.

\subsection{Spectral asymptotics in the \texorpdfstring{$C^*$}{C*}-algebraic framework}
\label{subsec:Cstar-Weyl}

We use the $C^*$-algebra $\Pi= \Pi(C(\mathbb{T}_\theta^d),C(\mathbb{S}^{d-1}))$ and the principal symbol map
$\sym$ introduced in Section~\ref{subsec:Cstar-PDO}.
The results below extend the order $-1$ asymptotic formula
of \cite[Theorem~1.1]{SXZ2023} to arbitrary negative orders.

	\begin{thm}\label{key-psd}
		Let $d\geq 2$ and $p>0.$ If $T\in \Pi(C(\mathbb{T}_\theta^d), C(\mathbb{S}^{d-1}))$ is positive, then
		\begin{equation}\label{key-psd eq}
			\lim_{t\to\infty}t^{\frac{p}{d}}\mu(t,T^pJ_\theta^{-p})=\big(d^{-\frac{1}{d}}\|{\rm sym}(T)\|_{L_d(L_{\infty}(\mathbb{T}_\theta^d)\bar{\otimes} L_\infty(\mathbb{S}^{d-1}))}\big)^p.
		\end{equation}
	\end{thm}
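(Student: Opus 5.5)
We aim to prove \eqref{key-psd eq} by reducing it to the known order $-1$ case \cite[Theorem~1.1]{SXZ2023}, which gives
\[
\lim_{t\to\infty}t^{\frac1d}\mu(t,TJ_\theta^{-1})=d^{-\frac1d}\|\sym(T)\|_{L_d}
\]
for $T\in\Pi$. Positivity of $T$ lets us pass to powers.

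\textbf{Reduction to a positive operator.} By Corollary~\ref{commutator-PSD-dp}, $[T,I_\theta^{-\alpha}]\in(\mathcal L_{\frac d\alpha,\infty})_0$ for every $\alpha>0$. The difference $J_\theta^{-\alpha}-I_\theta^{-\alpha}$ is a Fourier multiplier with symbol $O(|n|^{-\alpha-2})$, plus the rank-one term at $n=0$, so it lies in $(\mathcal L_{\frac d\alpha,\infty})_0$. Hence $[T,J_\theta^{-\alpha}]\in(\mathcal L_{\frac d\alpha,\infty})_0$ as well. Put $S=J_\theta^{-\frac p2}T^pJ_\theta^{-\frac p2}\geq0$. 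Then
\[
T^pJ_\theta^{-p}-S=[T^p,J_\theta^{-\frac p2}]J_\theta^{-\frac p2}\in(\mathcal L_{\frac dp,\infty})_0
\]
by \eqref{eq:small-ideal-product}, noting that $T^p\in\Pi$ by functional calculus. The asymptotic $t^{\frac pd}\mu(t,\cdot)\to c$ is stable under perturbation by $(\mathcal L_{\frac dp,\infty})_0$; this is a standard consequence of \eqref{singular-value-sum}. So it suffices to treat the positive operator $S$.

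\textbf{Comparison with a power of an order $-1$ operator.} We compare $S$ with $A^p$, where $A=J_\theta^{-\frac12}TJ_\theta^{-\frac12}\geq0$. The order $-1$ result applied to $T$, together with the same commutator argument, gives
\[
\lim_{t\to\infty}t^{\frac1d}\mu(t,A)=d^{-\frac1d}\|\sym(T)\|_{L_d}.
\]
Since $\mu(t,A^p)=\mu(t,A)^p$, this yields
\[
\lim_{t\to\infty}t^{\frac pd}\mu(t,A^p)=\big(d^{-\frac1d}\|\sym(T)\|_{L_d}\big)^p,
\]
which is exactly the right side of \eqref{key-psd eq}. It remains to show that
\[
A^p-J_\theta^{-\frac p2}T^pJ_\theta^{-\frac p2}\in(\mathcal L_{\frac dp,\infty})_0 .
\]

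\textbf{Integer $p$.} For $p\in\mathbb N$, expand $A^p=J^{-\frac12}TJ^{-1}T\cdots TJ^{-\frac12}$. We then move the factors $J^{-1}$ outward one at a time, using $[T,J_\theta^{-\alpha}]\in(\mathcal L_{\frac d\alpha,\infty})_0$. Each error term is a product of weak Schatten operators of total order $p$, with at least one factor lying in a separable ideal, so every error lies in $(\mathcal L_{\frac dp,\infty})_0$ by \eqref{eq:small-ideal-product}.

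\textbf{General $p>0$.} This is the main obstacle. Our first approach is to write $p=k/m$ with $k,m$ integers and to use the integer case for $T^{1/m}\in\Pi$, which is positive and has $\sym(T^{1/m})=\sym(T)^{1/m}$. This gives the result for $S_m=J^{-\frac{1}{2m}}T^{1/m}J^{-\frac1{2m}}$ at order $1/m$ (obtained from the order $-1$ theorem after rescaling $J$), and then for its $k$th power.

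The difficulty is that the order $-1$ theorem only handles $J^{-1}$, not $J^{-1/m}$. To get around this, we reverse the roles: apply the integer case with exponent $m$ to $R=J^{-\frac{p}{2m}\cdot}$-type operators. Concretely, set
\[
B=J_\theta^{-\frac{p}{2}}T^{p}J_\theta^{-\frac p2}.
\]
By the integer commutator argument,
\[
B^{\frac dp\cdot\frac1d}\text{-type powers} \quad\text{and}\quad J^{-\frac{q}{2}}T^{q}J^{-\frac q2}\ \text{with } q=1
\]
differ by separable errors. Here we use the operator monotonicity and continuity of $s\mapsto s^{1/p}$, via a Birman--Koplienko--Solomyak type estimate, to transfer separable differences from $B$ to $B^{1/p}$. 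This shows that $B^{1/p}$ and $A$ have the same asymptotics in $\mathcal L_{d,\infty}$, and hence that $B$ has the stated asymptotics. For $p<1$ the relevant estimate is the direct BKS inequality, $\|X^{r}-Y^{r}\|\lesssim\||X-Y|^{r}\|$ for $0<r<1$, in quasi-Banach weak ideals. For $p>1$ we apply that inequality in the reverse direction, with $r=1/p$. Justifying this transfer in $(\mathcal L_{\cdot,\infty})_0$ is the step we expect to require the most care.
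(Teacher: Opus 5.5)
Your reduction to $S=J_\theta^{-\frac p2}T^pJ_\theta^{-\frac p2}$ is correct. So are the asymptotics of $A=J_\theta^{-\frac12}TJ_\theta^{-\frac12}$ and the integer-$p$ argument; the latter is essentially the idea behind Lemma~\ref{xiao step 2 lemma}.

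The non-integer case has a genuine gap, because the Birman--Koplienko--Solomyak step is circular. An inequality $\|X^r-Y^r\|\lesssim\||X-Y|^r\|$ only transports a separable difference you \emph{already have}. To get $B^{1/p}-A\in(\mathcal L_{d,\infty})_0$ from it with $r=\frac1p$, you need $B-A^p\in(\mathcal L_{\frac dp,\infty})_0$, which is exactly what is to be proved. Running it in the other direction needs $B^{1/p}-A$ small, which is equally unknown. Your commutator argument never produces such a difference at a non-integer exponent. All it gives is $(J_\theta^{-\frac\beta2}T^{\beta}J_\theta^{-\frac\beta2})^n-J_\theta^{-\frac{n\beta}2}T^{n\beta}J_\theta^{-\frac{n\beta}2}\in(\mathcal L_{\frac d{n\beta},\infty})_0$ for $\beta>0$ and $n\in\mathbb N$.

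Combined with $\mu(t,X^n)=\mu(t,X)^n$ for $X\geq0$, that relation does handle every rational $p$, and more easily than you feared. Put $S_m=J_\theta^{-\frac1{2m}}T^{\frac1m}J_\theta^{-\frac1{2m}}$. Your integer argument uses only $[T',J_\theta^{-\beta}]\in(\mathcal L_{\frac d\beta,\infty})_0$ for all $\beta>0$, so it gives $S_m^m-A\in(\mathcal L_{d,\infty})_0$. Hence $t^{\frac1{md}}\mu(t,S_m)\to(d^{-\frac1d}\|\sym(T)\|_d)^{\frac1m}$, with no rescaling of $J_\theta$ needed, and one further integer step gives $p=\frac km$.

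Integer powers and roots starting from $p=1$ generate exactly the positive rationals, however, so irrational $p$ is not covered. A naive continuity argument in $p$ does not close this either. As $q\to p$, $J_\theta^{-\frac q2}$ does not converge to $J_\theta^{-\frac p2}$ in $\mathcal L_{\frac{2d}p,\infty}$. For $q>p$ the difference keeps the leading singular value behaviour of $J_\theta^{-\frac p2}$, and for $q<p$ one has $J_\theta^{-\frac q2}\notin\mathcal L_{\frac{2d}p,\infty}$.

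What is missing is a separable-difference statement valid for a whole interval of real exponents. The paper supplies it with Theorem~\ref{FSZ separable part thm} (\cite[Proposition~6.2]{FSZ2024}). For every real $1<p<d$ it gives $J_\theta^{-p}T^p-(T^{\frac12}J_\theta^{-1}T^{\frac12})^p\in(\mathcal L_{\frac dp,\infty})_0$, under the hypothesis $[J_\theta^{-p},T^{\frac12}]\in(\mathcal L_{\frac dp,\infty})_0$. This yields all $p\in(1,2)$ (Lemma~\ref{xiao step 1 lemma}). Doubling and halving (Lemma~\ref{xiao step 2 lemma}) then reaches every $p>0$.
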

	
	\begin{cor}\label{key-psd cor} Let $d\geq 2$ and $p>0.$ If $T\in \Pi(C(\mathbb{T}_\theta^d), C(\mathbb{S}^{d-1})),$ then
		$$\lim_{t\to\infty}t^{\frac{1}{p}}\mu(t,TJ_\theta^{-\frac{d}{p}})=d^{-\frac{1}{p}}\|{\rm sym}(T)\|_{L_p(L_{\infty}(\mathbb{T}_\theta^d)\bar{\otimes} L_\infty(\mathbb{S}^{d-1}))}.$$
	\end{cor}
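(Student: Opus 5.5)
We derive Corollary~\ref{key-psd cor} from Theorem~\ref{key-psd} by a polar decomposition argument in $\Pi$. Put $m=\frac{d}{p}>0$. We want to rewrite $TJ_\theta^{-m}$ in terms of a positive element of $\Pi$. Consider instead the operator $A=J_\theta^{-m}T^*TJ_\theta^{-m}=|TJ_\theta^{-m}|^2$. Then
\[
\mu(t,TJ_\theta^{-m})=\mu(t,A)^{\frac12}.
\]
Set $S=(T^*T)^{\frac{1}{2m}}\in\Pi$, which is positive and has $\sym(S)=|\sym(T)|^{\frac1m}$ by compatibility of functional calculus with $\sym$. Apply Theorem~\ref{key-psd} to $S$ with exponent $2m$:
\[
\lim_{t\to\infty}t^{\frac{2m}{d}}\mu(t,S^{2m}J_\theta^{-2m})
=\big(d^{-\frac1d}\|\sym(S)\|_{L_d}\big)^{2m}.
\]
Since $\|\sym(S)\|_{L_d}^{2m}=\||\sym(T)|^{\frac1m}\|_{L_d}^{2m}=\|\sym(T)\|_{L_{d/m}}^2=\|\sym(T)\|_{L_p}^2$ and $d^{-\frac{2m}{d}}=d^{-\frac2p}$, this limit equals $d^{-\frac2p}\|\sym(T)\|_{L_p}^2$.

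The main step is to show that $A$ and $S^{2m}J_\theta^{-2m}=T^*TJ_\theta^{-2m}$ have the same leading asymptotics. Write
\[
T^*TJ_\theta^{-2m}-J_\theta^{-m}T^*TJ_\theta^{-m}=[T^*T,J_\theta^{-m}]J_\theta^{-m}.
\]
By Corollary~\ref{commutator-PSD-dp}, applied with $J_\theta$ in place of $I_\theta$, we have $[T^*T,J_\theta^{-m}]\in(\mathcal L_{\frac dm,\infty})_0$. The Bessel version of that corollary follows from the same proof: $J_\theta^{-m}-I_\theta^{-m}$ is a Fourier multiplier with symbol $O(|n|^{-m-2})$, which lies in $\mathcal L_{\frac{d}{m+2},\infty}$. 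Also $J_\theta^{-m}\in\mathcal L_{\frac dm,\infty}$, so \eqref{eq:small-ideal-product} places the difference in $(\mathcal L_{\frac{d}{2m},\infty})_0$.

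Since singular value asymptotics of the form $\lim t^{1/q}\mu(t,\cdot)$ are stable under perturbations in $(\mathcal L_{q,\infty})_0$ (a standard consequence of \eqref{singular-value-sum}), we get
\[
\lim_{t\to\infty}t^{\frac{2}{p}}\mu(t,A)=d^{-\frac2p}\|\sym(T)\|_{L_p}^2 .
\]
One point needs care here: $T^*TJ_\theta^{-2m}$ is not self-adjoint. This causes no difficulty, because the perturbation lemma applies to singular values of arbitrary compact operators. Taking square roots gives the claim.

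The expected obstacle is only bookkeeping: transferring Corollary~\ref{commutator-PSD-dp} from $I_\theta$ to $J_\theta$, and keeping track of the exponents. Alternatively, one can avoid $J_\theta$ commutators altogether. Use $J_\theta^{-m}-I_\theta^{-m}\in\mathcal L_{\frac{d}{m+2},\infty}\subset(\mathcal L_{\frac dm,\infty})_0$ to replace $J_\theta^{-m}$ by $I_\theta^{-m}$ at the start, and then use Corollary~\ref{commutator-PSD-dp} directly.
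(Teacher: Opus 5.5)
Your argument is correct, but it takes a longer route than the paper. The paper applies Theorem~\ref{key-psd} directly at exponent $m=\frac dp$ to the positive element $|T|^{\frac1m}\in\Pi$. This gives the asymptotics of $|T|J_\theta^{-m}$. The paper then uses the exact identity
\[
(TJ_\theta^{-m})^*(TJ_\theta^{-m})=J_\theta^{-m}|T|^2J_\theta^{-m}=(|T|J_\theta^{-m})^*(|T|J_\theta^{-m}),
\]
so $\mu(t,TJ_\theta^{-m})=\mu(t,|T|J_\theta^{-m})$ with no error term at all.

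You instead apply the theorem at exponent $2m$. That yields the asymptotics of the non-selfadjoint product $T^*TJ_\theta^{-2m}$ rather than of $|TJ_\theta^{-m}|^2$. You must then pay for the reordering with the Bessel commutator estimate \eqref{eq:Bessel-small-commutator} and Lemma~\ref{weak weyl}. This is the squaring step of Lemma~\ref{xiao step 2 lemma} run once more, and it is redundant because Theorem~\ref{key-psd} already holds for every exponent. Your version gains nothing over the paper's.

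All your individual steps are sound: $\sym(S)=|\sym(T)|^{\frac1m}$ by functional calculus, the norm identity $\||a|^{\frac1m}\|_d^{2m}=\|a\|_p^2$, and the remainder $[T^*T,J_\theta^{-m}]J_\theta^{-m}\in(\mathcal L_{\frac{d}{2m},\infty})_0$. One wording point: no polar decomposition is actually used, and the partial isometry of $T$ need not lie in $\Pi$. What you really use is $|TJ_\theta^{-m}|^2=J_\theta^{-m}T^*TJ_\theta^{-m}$.
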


Let us recall the following lemma due to Birman and Solomyak \cite{BS1987}.
\begin{lem}\label{weak weyl}
Let $p>0$, let $A\in\mathcal L_{p,\infty}$, and let
$B\in(\mathcal L_{p,\infty})_0$.
If $\lim\limits_{t\to\infty}t^{\frac{1}{p}}\mu(t,A)$ exists, then
\[
\lim_{t\to\infty}t^{\frac{1}{p}}\mu(t,A+B)
=
\lim_{t\to\infty}t^{\frac{1}{p}}\mu(t,A).
\]
\end{lem}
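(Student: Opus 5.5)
This is the Birman--Solomyak stability lemma. I would prove it directly from the two singular value inequalities \eqref{singular-value-sum}, applied with a small parameter $\delta\in(0,1)$. Set
\[
c=\lim_{t\to\infty}t^{\frac1p}\mu(t,A).
\]

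For the upper bound, apply \eqref{singular-value-sum} with the split $t=(1-\delta)t+\delta t$:
\[
\mu(t,A+B)\leq\mu((1-\delta)t,A)+\mu(\delta t,B).
\]
Multiply by $t^{\frac1p}$. The first term becomes $(1-\delta)^{-\frac1p}\big((1-\delta)t\big)^{\frac1p}\mu((1-\delta)t,A)$, which tends to $(1-\delta)^{-\frac1p}c$. The second term is $\delta^{-\frac1p}(\delta t)^{\frac1p}\mu(\delta t,B)$. It tends to $0$, because $B\in(\mathcal L_{p,\infty})_0$ means exactly that $s^{\frac1p}\mu(s,B)\to0$, as recalled before \eqref{eq:small-ideal-product}. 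This gives
\[
\limsup_{t\to\infty}t^{\frac1p}\mu(t,A+B)\leq(1-\delta)^{-\frac1p}c.
\]

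For the lower bound, write $A=(A+B)+(-B)$ and apply \eqref{singular-value-sum} in the form
\[
\mu(t,A)\leq\mu((1-\delta)t,A+B)+\mu(\delta t,B),
\]
using $\mu(\cdot,-B)=\mu(\cdot,B)$. Replacing $t$ by $t/(1-\delta)$ and rearranging gives
\[
t^{\frac1p}\mu(t,A+B)\geq(1-\delta)^{\frac1p}\Big(\tfrac{t}{1-\delta}\Big)^{\frac1p}\mu\Big(\tfrac{t}{1-\delta},A\Big)-(1-\delta)^{\frac1p}\Big(\tfrac{t}{1-\delta}\Big)^{\frac1p}\mu\Big(\tfrac{\delta t}{1-\delta},B\Big).
\]
The last term tends to $0$ by the same argument as before. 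Hence
\[
\liminf_{t\to\infty}t^{\frac1p}\mu(t,A+B)\geq(1-\delta)^{\frac1p}c.
\]
Letting $\delta\to0$ in both bounds shows the limit exists and equals $c$.

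The only delicate point is the convention $\mu(t,T)=\mu(\lfloor t\rfloor,T)$ for non-integer $t$. Here \eqref{singular-value-sum} should be used with real arguments. This is legitimate because $\lfloor t\rfloor+\lfloor s\rfloor\leq\lfloor t+s\rfloor$ and $\mu$ is nonincreasing, so the inequality for real arguments follows from the integer version. Beyond this, the argument is routine bookkeeping.
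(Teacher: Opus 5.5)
Your argument is correct. The $\delta$-splitting in \eqref{singular-value-sum}, applied to $A+B$ and to $A=(A+B)+(-B)$, gives $(1-\delta)^{\frac1p}c\leq\liminf_{t\to\infty} t^{\frac1p}\mu(t,A+B)\leq\limsup_{t\to\infty} t^{\frac1p}\mu(t,A+B)\leq(1-\delta)^{-\frac1p}c$, and your justification for real arguments via $\lfloor t\rfloor+\lfloor s\rfloor\leq\lfloor t+s\rfloor$ is sound. The paper gives no proof and simply cites Birman and Solomyak \cite{BS1987}; their proof is this same Ky Fan-type perturbation argument, phrased with counting functions, so your proposal is the standard route written out in full.
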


Corollary~\ref{commutator-PSD-dp} also holds with
$I_\theta^{-\alpha}$ replaced by $J_\theta^{-\alpha}$:
\begin{equation}\label{eq:Bessel-small-commutator}
 [T,J_\theta^{-\alpha}]
 \in(\mathcal L_{\frac{d}{\alpha},\infty})_0,\qquad T\in\Pi,\quad\alpha>0.
\end{equation}
The Fourier multiplier
\[
 J_\theta^{-\alpha}-I_\theta^{-\alpha}
\]
has symbol $O((1+|n|)^{-\alpha-2})$ on $\mathbb Z^d$.
Consequently,
\[
 J_\theta^{-\alpha}-I_\theta^{-\alpha}
 \in\mathcal L_{\frac{d}{\alpha+2},\infty}
 \subset(\mathcal L_{\frac{d}{\alpha},\infty})_0.
\]
Together with Corollary~\ref{commutator-PSD-dp}, this proves
\eqref{eq:Bessel-small-commutator}.

	To prove Theorem \ref{key-psd}, we first state the following result established in \cite[Proposition~6.2]{FSZ2024}.

	\begin{thm}\label{FSZ separable part thm} Let $A\in\mathcal{L}_{\infty}$ and $B\in\mathcal{L}_{d,\infty}$ be positive operators. If $1<p<d$ is such that $[B^p,A^{\frac{1}{2}}]\in(\mathcal{L}_{\frac{d}{p},\infty})_0,$ then
		$$B^pA^p-(A^{\frac{1}{2}}BA^{\frac{1}{2}})^p\in(\mathcal{L}_{\frac{d}{p},\infty})_0.$$
	\end{thm}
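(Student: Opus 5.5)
The plan is to split the difference into a commutator term and a ``power of a product'' term:
\[
B^pA^p-(A^{\frac12}BA^{\frac12})^p
=\bigl(B^pA^p-A^{\frac p2}B^pA^{\frac p2}\bigr)
+\bigl(A^{\frac p2}B^pA^{\frac p2}-(A^{\frac12}BA^{\frac12})^p\bigr).
\]
Throughout, $\mathcal L_{d/p,\infty}$ and $(\mathcal L_{d/p,\infty})_0$ denote $\mathcal L_{\frac dp,\infty}$ and $(\mathcal L_{\frac dp,\infty})_0$.

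\textbf{First term.} Apply Lemma~\ref{commutator separable part abstract lemma} with $r=\frac dp$, the single generator $A^{\frac12}$, and the selfadjoint operator $B^p\in\mathcal L_{d/p,\infty}$. This gives $[f(A^{\frac12}),B^p]\in(\mathcal L_{d/p,\infty})_0$ for every $f$ continuous on the spectrum with $f(0)=0$. In particular $[B^p,A^{\frac p2}]\in(\mathcal L_{d/p,\infty})_0$. Since $A$ is bounded, it follows that
\[
B^pA^p-A^{\frac p2}B^pA^{\frac p2}=[B^p,A^{\frac p2}]A^{\frac p2}\in(\mathcal L_{d/p,\infty})_0.
\]

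\textbf{Second term.} I first transfer the hypothesis from $B^p$ to lower powers $B^s$, $0<s\le p$, and in particular to $B$ itself. I would use the resolvent representation
\[
B^s=c_{s,p}\int_0^\infty \lambda^{\frac sp-1}B^p(B^p+\lambda)^{-1}\,d\lambda .
\]
This gives
\[
[B^s,A^{\frac12}]=c_{s,p}\int_0^\infty\lambda^{\frac sp}(B^p+\lambda)^{-1}[B^p,A^{\frac12}](B^p+\lambda)^{-1}\,d\lambda .
\]
The goal is to show that this lies in $(\mathcal L_{d/s,\infty})_0$, using that $B^p\in\mathcal L_{d/p,\infty}$ controls the resolvent factors in the relevant weak ideals, for instance $\lambda^{\frac sp}(B^p+\lambda)^{-1}$ compared with $B^{s-p}$ in a suitable sense. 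Equivalently, I would argue via double operator integrals in $B^p$, with the Schur multiplier
\[
\frac{\mu^{s/p}-\nu^{s/p}}{\mu-\nu},
\]
combined with interpolation. I expect this step to be the main obstacle: the multiplier must gain the correct amount of decay in the weak Schatten scale. If it fails, I would try instead to work directly with $B^p$ and $A$ throughout, replacing $B$ by $B^p$ in every step below.

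\textbf{Block-diagonal approximation.} Assume $[B,A^{\frac12}]\in(\mathcal L_{d,\infty})_0$. Approximate $A$ in norm by $A_N=\sum_k a_k\phi_k(A)$, where $\phi_k$ is a fine continuous partition of unity on $\sigma(A)$ with almost-disjoint supports. The first paragraph gives $[B,\phi_k(A)]\in(\mathcal L_{d,\infty})_0$. Modulo $(\mathcal L_{d,\infty})_0$, this makes $A_N^{\frac12}BA_N^{\frac12}$ approximately block diagonal, equal to $\sum_k a_k\,\phi_k^{\frac12}B\phi_k^{\frac12}$.

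I then take $p$-th powers. The tool is a Birman--Koplienko--Solomyak type estimate: if $X,Y\ge0$ lie in $\mathcal L_{d,\infty}$ and $X-Y\in(\mathcal L_{d,\infty})_0$, then $X^p-Y^p\in(\mathcal L_{d/p,\infty})_0$. This can be proved by the same double operator integral technique used for Lemma~\ref{bounded-Psi}, together with finite-rank approximation. Applying it, the $p$-th power of the block-diagonal operator becomes $\sum_k a_k^p(\phi_k^{\frac12}B\phi_k^{\frac12})^p$. A second application, again using the small commutators, replaces this by $\sum_k a_k^p\phi_k^{\frac p2}B^p\phi_k^{\frac p2}$, which is $A_N^{\frac p2}B^pA_N^{\frac p2}$ up to errors that vanish as the partition is refined.

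\textbf{Passage to $A$.} Finally, let $N\to\infty$. The maps $A\mapsto(A^{\frac12}BA^{\frac12})^p$ and $A\mapsto A^{\frac p2}B^pA^{\frac p2}$ are continuous from the norm topology into $\mathcal L_{d/p,\infty}$, uniformly on bounded sets (for $p>1$, via the same power estimate). Since $(\mathcal L_{d/p,\infty})_0$ is closed, the second term lies in $(\mathcal L_{d/p,\infty})_0$, and the theorem follows.
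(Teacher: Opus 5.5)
The paper does not prove this statement; it quotes it from \cite[Proposition~6.2]{FSZ2024}. Your proposal therefore has to stand on its own, and it has a genuine gap in the second term.

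\textbf{What is sound.} The first term is handled correctly: Lemma~\ref{commutator separable part abstract lemma}, applied with generator $A^{\frac12}$ and $B^p$, gives $[B^p,A^{\frac p2}]\in(\mathcal L_{\frac dp,\infty})_0$.

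Your power estimate is also correct for $1<p<d$. Write $\frac{\lambda^p-\nu^p}{\lambda-\nu}=(\lambda^{p-1}+\nu^{p-1})f(\lambda/\nu)$ with $f(t)=\frac{t^p-1}{(t-1)(t^{p-1}+1)}$. Then $X^p-Y^p=T_\phi\bigl(X^{p-1}(X-Y)+(X-Y)Y^{p-1}\bigr)$, where $T_\phi$ is bounded on $\mathcal L_{\frac dp,\infty}$ as in Lemma~\ref{bounded-Psi}. This is where $p<d$ enters.

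The transfer of the hypothesis to $[B^s,A^{\frac12}]$ is true, but not by the linear route you sketch. The divided difference of $t^{s/p}$ behaves like $\max(\lambda,\nu)^{\frac sp-1}$ near the origin, so it loses decay rather than gaining it. Likewise, the obvious bound on your resolvent integral in $\mathcal L_{\frac ds,\infty}$ is $\int_0^1\lambda^{-1}\,d\lambda=\infty$. What works is Ando's inequality for the operator monotone function $t^\alpha$, $\alpha=\frac sp$. Take $C=B^p$ and $U=e^{\ri uA^{1/2}}$. Then $[C^\alpha,U]=\bigl(C^\alpha-(UCU^*)^\alpha\bigr)U$ and $\mu\bigl(C^\alpha-(UCU^*)^\alpha\bigr)\prec\prec\mu([C,U])^\alpha$. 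Since $\frac ds>1$, $(\mathcal L_{\frac ds,\infty})_0$ is closed under submajorisation. Stone--Weierstrass and the commutator lemma then give $[B^s,g(A)]\in(\mathcal L_{\frac ds,\infty})_0$ for every continuous $g$.

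\textbf{The gap: the block-diagonal step.} This is where the whole difficulty of the statement sits. The hypothesis only controls commutators of $B$ with continuous functions of $A$, and a continuous partition of unity on a connected part of $\sigma(A)$ must have overlapping supports. The operators $Q_k=\phi_k(A)^{\frac12}B\phi_k(A)^{\frac12}$ are then not orthogonal. Indeed $Q_kQ_{k+1}\equiv B^2(\phi_k\phi_{k+1})(A)$ modulo $(\mathcal L_{\frac d2,\infty})_0$, which is not small. Hence $(\sum_ka_kQ_k)^p\not\equiv\sum_ka_k^pQ_k^p$ modulo $(\mathcal L_{\frac dp,\infty})_0$ in general.

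Refining the partition does not remove this error. Suppose $A$ has an eigenvalue $a_0>0$ of infinite multiplicity, $B$ commutes with $A$, and the restriction $B_0$ of $B$ to that eigenspace satisfies $\lim_{t\to\infty}t^{\frac1d}\mu(t,B_0)>0$. If $\phi_k(a_0)=\phi_{k+1}(a_0)=\frac12$, your first $p$-th power step errs on that eigenspace by $\bigl((\frac{a_k+a_{k+1}}{2})^p-2^{-p}(a_k^p+a_{k+1}^p)\bigr)B_0^p\approx(1-2^{1-p})a_0^pB_0^p$. This does not shrink however fine the mesh. The two obvious repairs both fail:
\begin{itemize}
\item disjointly supported bumps avoid overlaps, but cannot sum to $1$;
\item spectral projections of $A$ are orthogonal, but their commutators with $B$ are not controlled by the hypothesis.
\end{itemize}

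Your second replacement, $(\phi_k^{\frac12}B\phi_k^{\frac12})^p\to\phi_k^{\frac p2}B^p\phi_k^{\frac p2}$, has a related problem: it is the theorem itself with $A$ replaced by $\phi_k(A)$. It is easy only when $\phi_k(A)$ is a projection $P$ with $[B,P]$ and $[B^p,P]$ small. In that case $B\equiv PBP+P^\perp BP^\perp$, and the power estimate gives $(PBP)^p\equiv PB^pP$.

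So your scheme proves the statement when $A$ is a finite combination of such almost-commuting projections. Passing to general $A$ needs an ingredient you do not supply: quantitative control of the asymptotic contribution of $B$ near the crossing points of the partition. For instance, you would need an asymptotic spectral measure of $B$ on $\sigma(A)$ whose atoms the crossing points avoid. You would also need an estimate showing the resulting transition errors are small in distance to $(\mathcal L_{\frac dp,\infty})_0$.
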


By \cite[Theorem~1.1]{SXZ2023} and Lemma~\ref{weak weyl},
we have the following formula, since
$J_\theta^{-1}-I_\theta^{-1}\in(\mathcal L_{d,\infty})_0$.

\begin{thm}\label{SXZ1 main thm} Let $d\geq 2.$ If $T\in \Pi(C(\mathbb{T}_\theta^d), C(\mathbb{S}^{d-1}))$, then
		$$\lim_{t\to\infty}t^{\frac{1}{d}}\mu(t,TJ_\theta^{-1})= d^{-\frac{1}{d}}\|{\rm sym}(T)\|_{L_d(L_{\infty}(\mathbb{T}_\theta^d)\bar{\otimes} L_\infty(\mathbb{S}^{d-1}))}.$$
	\end{thm}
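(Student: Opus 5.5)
The statement just given, Theorem~\ref{SXZ1 main thm}, is the order $-1$ formula of \cite[Theorem~1.1]{SXZ2023} with $I_\theta^{-1}$ replaced by $J_\theta^{-1}$. My plan is to quote that result for $TI_\theta^{-1}$ and then transfer it to $TJ_\theta^{-1}$ by the Birman--Solomyak perturbation Lemma~\ref{weak weyl}. So the argument reduces to showing that the difference $TJ_\theta^{-1}-TI_\theta^{-1}$ lies in the separable part $(\mathcal L_{d,\infty})_0$.

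First I invoke \cite[Theorem~1.1]{SXZ2023}. It gives, for every $T\in\Pi(C(\qt),C(\mathbb S^{d-1}))$,
\[
 \lim_{t\to\infty}t^{\frac1d}\mu(t,TI_\theta^{-1})=d^{-\frac1d}\|\sym(T)\|_{L_d(L_\infty(\qt)\bar\otimes L_\infty(\mathbb S^{d-1}))}.
\]
I have to check that its normalisation matches the present one. That means $I_\theta^{-1}$ is the generalised inverse vanishing on $\mathbb C1$, as in \cite[p.~16]{SXZ2023}, and the measure on $\mathbb S^{d-1}$ is the unnormalised surface measure. With those conventions the limit exists and has the stated value. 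In particular $TI_\theta^{-1}\in\mathcal L_{d,\infty}$.

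Next I control the difference. The operator $J_\theta^{-1}-I_\theta^{-1}$ is a Fourier multiplier. Its symbol is $(1+|n|^2)^{-1/2}-|n|^{-1}=O(|n|^{-3})$ for $n\neq0$, and it equals $1$ at $n=0$. Counting lattice points, the singular values are $O(t^{-3/d})$, so
\[
 J_\theta^{-1}-I_\theta^{-1}\in\mathcal L_{\frac d3,\infty}\subset(\mathcal L_{d,\infty})_0 .
\]
This is the case $\alpha=1$ of the observation made before \eqref{eq:Bessel-small-commutator}. Since $T$ is bounded and $(\mathcal L_{d,\infty})_0$ is an ideal, it follows that $B:=T(J_\theta^{-1}-I_\theta^{-1})\in(\mathcal L_{d,\infty})_0$.

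Finally I apply Lemma~\ref{weak weyl} with $p=d$, $A=TI_\theta^{-1}$ and $B$ as above. Since $A+B=TJ_\theta^{-1}$, the limit for $TJ_\theta^{-1}$ exists and coincides with the one for $TI_\theta^{-1}$, which is the claim. I do not expect a genuine obstacle here. The only point needing care is the first step, namely confirming that the conventions of \cite{SXZ2023} for the constant $d^{-1/d}$, the surface measure and the treatment of the zero mode agree with those used here.
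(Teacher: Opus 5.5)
Your proposal is correct and is essentially the paper's own argument: the paper also obtains Theorem~\ref{SXZ1 main thm} from \cite[Theorem~1.1]{SXZ2023} for $TI_\theta^{-1}$ together with Lemma~\ref{weak weyl}, using $J_\theta^{-1}-I_\theta^{-1}\in(\mathcal L_{d,\infty})_0$. Your symbol estimate $O(|n|^{-3})$, including the zero mode, simply makes explicit the one-line justification that the paper gives.
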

	
	\begin{lem}\label{xiao step 1 lemma} Equality~\eqref{key-psd eq} holds for every $1<p<d$ and for every positive $T\in \Pi(C(\mathbb{T}_\theta^d), C(\mathbb{S}^{d-1})).$
	\end{lem}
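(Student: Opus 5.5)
We prove \eqref{key-psd eq} for $1<p<d$ by applying Theorem~\ref{FSZ separable part thm} with $A=T$ and $B=J_\theta^{-1}$.

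Both operators are positive. $A=T\in\Pi\subset\mathcal L_\infty$, and $B=J_\theta^{-1}\in\mathcal L_{d,\infty}$ because its symbol is $(1+|n|^2)^{-1/2}$ on $\mathbb Z^d$. The hypothesis to check is
\[
[J_\theta^{-p},T^{\frac12}]\in(\mathcal L_{\frac dp,\infty})_0 .
\]
Since $T\geq0$ lies in the $C^*$-algebra $\Pi$, continuous functional calculus gives $T^{\frac12}\in\Pi$. The inclusion \eqref{eq:Bessel-small-commutator} with $\alpha=p>0$ then yields exactly this. Theorem~\ref{FSZ separable part thm} therefore gives
\[
J_\theta^{-p}T^p-(T^{\frac12}J_\theta^{-1}T^{\frac12})^p\in(\mathcal L_{\frac dp,\infty})_0 .
\]

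Next we replace $J_\theta^{-p}T^p$ by $T^pJ_\theta^{-p}$. Applying \eqref{eq:Bessel-small-commutator} to $T^p\in\Pi$ shows that their difference is again in $(\mathcal L_{\frac dp,\infty})_0$. By Lemma~\ref{weak weyl}, it then suffices to compute
\[
\lim_{t\to\infty}t^{\frac pd}\mu\big(t,(T^{\frac12}J_\theta^{-1}T^{\frac12})^p\big).
\]
By \eqref{singular-value-square}, $\mu(t,S^p)=\mu(t,S)^p$ for positive $S$. So we only need
\[
\lim_{t\to\infty}t^{\frac1d}\mu\big(t,T^{\frac12}J_\theta^{-1}T^{\frac12}\big)=d^{-\frac1d}\|\mathrm{sym}(T)\|_{L_d}.
\]

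To get this, write $T^{\frac12}J_\theta^{-1}T^{\frac12}=TJ_\theta^{-1}+T^{\frac12}[J_\theta^{-1},T^{\frac12}]$. The second term lies in $(\mathcal L_{d,\infty})_0$, by \eqref{eq:Bessel-small-commutator} with $\alpha=1$ and the ideal property. Lemma~\ref{weak weyl} together with Theorem~\ref{SXZ1 main thm} then gives the limit
\[
d^{-\frac1d}\|\mathrm{sym}(T)\|_{L_d(L_\infty(\qt)\bar\otimes L_\infty(\mathbb S^{d-1}))}.
\]
Raising this to the power $p$ yields \eqref{key-psd eq}.

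The only delicate point is the commutator hypothesis of Theorem~\ref{FSZ separable part thm}. It rests on $T^{\frac12}\in\Pi$ and on \eqref{eq:Bessel-small-commutator}, and both are available at this point in the paper. The restriction $1<p<d$ is used only to meet the range in Theorem~\ref{FSZ separable part thm}.
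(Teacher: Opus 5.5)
Your proof is correct and follows the paper's argument essentially step for step. You use the same application of Theorem~\ref{FSZ separable part thm} with $A=T$ and $B=J_\theta^{-1}$, and you identify the limit for $T^{\frac12}J_\theta^{-1}T^{\frac12}$ the same way, via Theorem~\ref{SXZ1 main thm} and Lemma~\ref{weak weyl}. The only minor difference is that you pass from $J_\theta^{-p}T^p$ to $T^pJ_\theta^{-p}$ by another application of \eqref{eq:Bessel-small-commutator}, whereas the paper notes that these two operators are adjoints of each other and so have the same singular values.
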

	\begin{proof}
Put $B=J_\theta^{-1}$ and
$C=T^{\frac{1}{2}}BT^{\frac{1}{2}}$.
By \eqref{eq:Bessel-small-commutator},
\[
[B,T^{\frac{1}{2}}]\in(\mathcal L_{d,\infty})_0.
\]
Hence
\[
C-TB
=T^{\frac{1}{2}}[B,T^{\frac{1}{2}}]
\in(\mathcal L_{d,\infty})_0.
\]
Theorem~\ref{SXZ1 main thm} and Lemma~\ref{weak weyl} give
\[
\lim_{t\to\infty}t^{\frac{1}{d}}\mu(t,C)
=d^{-\frac{1}{d}}\|\sym(T)\|_d.
\]
Since $C\geq0$, it follows that
\[
\lim_{t\to\infty}t^{\frac{p}{d}}\mu(t,C^p)
=\bigl(d^{-\frac{1}{d}}\|\sym(T)\|_d\bigr)^p.
\]

Moreover, \eqref{eq:Bessel-small-commutator} gives
\[
[B^p,T^{\frac{1}{2}}]
\in(\mathcal L_{\frac{d}{p},\infty})_0.
\]
Applying Theorem~\ref{FSZ separable part thm} with $A=T$,
we obtain
\[
B^pT^p-C^p
\in(\mathcal L_{\frac{d}{p},\infty})_0.
\]
Applying Lemma~\ref{weak weyl} once more, we obtain
\[
\lim_{t\to\infty}t^{\frac{p}{d}}\mu(t,B^pT^p)
=\bigl(d^{-\frac{1}{d}}\|\sym(T)\|_d\bigr)^p.
\]
Finally, $(B^pT^p)^*=T^pB^p$, so
$\mu(t,B^pT^p)=\mu(t,T^pB^p)$.
This proves \eqref{key-psd eq}.
\end{proof}
	
	\begin{lem}\label{xiao step 2 lemma} Let $p>0$ and let $T\in\Pi(C(\mathbb{T}_\theta^d), C(\mathbb{S}^{d-1}))$ be positive. Equality~\eqref{key-psd eq} holds for $p$ if and only if it holds for $2p.$
	\end{lem}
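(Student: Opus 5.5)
We prove the doubling lemma by relating $T^{2p}J_\theta^{-2p}$ to the square of $A_p:=T^pJ_\theta^{-p}$, modulo an operator in the small ideal $(\mathcal L_{\frac{d}{2p},\infty})_0$, and then using Lemma~\ref{weak weyl}.

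Since $T\geq0$ lies in the $C^*$-algebra $\Pi$, so does $T^p$ for every $p>0$, by continuous functional calculus. Write $B=J_\theta^{-p}$, so that $B\in\mathcal L_{\frac dp,\infty}$. By \eqref{eq:Bessel-small-commutator} we have $[T^p,B]\in(\mathcal L_{\frac dp,\infty})_0$. Then
\[
 A_pA_p^*=T^pB^2T^p
 =T^{2p}B^2+T^p[B^2,T^p],
\]
and by \eqref{eq:Bessel-small-commutator} with $\alpha=2p$ the commutator $[B^2,T^p]=[J_\theta^{-2p},T^p]$ lies in $(\mathcal L_{\frac{d}{2p},\infty})_0$. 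Since $T^p$ is bounded, it follows that
\[
 A_pA_p^*-T^{2p}J_\theta^{-2p}\in(\mathcal L_{\frac{d}{2p},\infty})_0 .
\]
Moreover, $\mu(t,A_pA_p^*)=\mu(t,A_p)^2$ by \eqref{singular-value-square}. Hence $t^{\frac pd}\mu(t,A_p)\to c$ if and only if $t^{\frac{2p}{d}}\mu(t,A_pA_p^*)\to c^2$.

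Both directions now follow.

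\emph{From $p$ to $2p$.} Suppose \eqref{key-psd eq} holds for $p$, with limit $c^p$, where $c=d^{-\frac1d}\|\sym(T)\|_d$. Then $t^{\frac{2p}d}\mu(t,A_pA_p^*)\to c^{2p}$. Lemma~\ref{weak weyl}, with exponent $\frac{d}{2p}$, transfers this limit to $T^{2p}J_\theta^{-2p}$, which is \eqref{key-psd eq} for $2p$.

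\emph{From $2p$ to $p$.} Suppose \eqref{key-psd eq} holds for $2p$. Lemma~\ref{weak weyl} applied in the reverse direction shows that $t^{\frac{2p}{d}}\mu(t,A_pA_p^*)\to c^{2p}$. Taking square roots, which is legitimate because everything is nonnegative, gives $t^{\frac pd}\mu(t,T^pJ_\theta^{-p})\to c^p$.

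The point that needs care is the small-ideal bookkeeping. The required commutator estimate is $[J_\theta^{-2p},T^p]\in(\mathcal L_{\frac{d}{2p},\infty})_0$, not merely $[J_\theta^{-p},T^p]\in(\mathcal L_{\frac dp,\infty})_0$. This is exactly the statement \eqref{eq:Bessel-small-commutator} with $\alpha=2p$ and the element $T^p\in\Pi$, so no product estimate \eqref{eq:small-ideal-product} is needed. One should also check that $T^p\in\Pi$. This is immediate for a positive element of a $C^*$-algebra, including the case $p<1$, where $t\mapsto t^p$ is still continuous on $[0,\infty)$.
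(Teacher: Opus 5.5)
Your proof is correct and follows essentially the same route as the paper. Both arguments compare $T^{2p}J_\theta^{-2p}$ with the ``square'' of $T^pJ_\theta^{-p}$ modulo $(\mathcal L_{\frac{d}{2p},\infty})_0$ via \eqref{eq:Bessel-small-commutator}, then conclude with Lemma~\ref{weak weyl} and $\mu(t,|A|^2)=\mu(t,A)^2$; the only difference is that the paper uses $(T^pJ_\theta^{-p})^*(T^pJ_\theta^{-p})=J_\theta^{-p}T^{2p}J_\theta^{-p}$ with remainder $[T^{2p},J_\theta^{-p}]J_\theta^{-p}$ (implicitly a product estimate), whereas you use $T^pJ_\theta^{-2p}T^p$ with remainder $T^p[J_\theta^{-2p},T^p]$, which lies in the small ideal directly.
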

	\begin{proof} Suppose \eqref{key-psd eq} holds with $2p.$ Thus,
		$$\lim_{t\to\infty}t^{\frac{2p}{d}}\mu(t,T^{2p}J_\theta^{-2p})=\big(d^{-\frac{1}{d}}\|{\rm sym}(T)\|_d \big)^{2p}.$$
		Since $[T^{2p},J_\theta^{-p}]\in(\mathcal{L}_{\frac{d}{p},\infty})_0$ by \eqref{eq:Bessel-small-commutator}, it follows that
		$$T^{2p}J_\theta^{-2p}-J_\theta^{-p}T^{2p}J_\theta^{-p}\in(\mathcal{L}_{\frac{d}{2p},\infty})_0.$$
		Hence,
		$$\lim_{t\to\infty}t^{\frac{2p}{d}}\mu(t,J_\theta^{-p}T^{2p}J_\theta^{-p})=\big(d^{-\frac{1}{d}}\|{\rm sym}(T)\|_d \big)^{2p}.$$
		Taking into account that
		$$\mu(t,J_\theta^{-p}T^{2p}J_\theta^{-p})=\mu^2(t,T^pJ_\theta^{-p}),$$
		we infer that \eqref{key-psd eq} holds for $p.$
		
		Conversely, if \eqref{key-psd eq} holds for $p$, then
the identity above gives the corresponding limit for
$J_\theta^{-p}T^{2p}J_\theta^{-p}$.
The same remainder estimate and Lemma~\ref{weak weyl}
then give \eqref{key-psd eq} for $2p$.
	\end{proof}
	
	\begin{proof}[Proof of Theorem~\ref{key-psd}]
Let $T\in\Pi(C(\qt),C(\mathbb S^{d-1}))$ be positive.
Equality \eqref{key-psd eq} holds for $p=1$ by
Theorem~\ref{SXZ1 main thm}, and for $1<p<2$ by
Lemma~\ref{xiao step 1 lemma}, since $d\geq2$.

For $p>0$, choose $k\in\mathbb Z$ with $1\leq2^kp<2$.
Lemma~\ref{xiao step 2 lemma} reduces the claim for $p$
to the claim for $2^kp$, which has already been proved.
\end{proof}
	
	\begin{proof}[Proof of Corollary \ref{key-psd cor}] Suppose first $T$ is positive. Using Theorem \ref{key-psd} with $T^{\frac{1}{p}},$ we obtain
		$$\lim_{t\to\infty}t^{\frac{p}{d}}\mu(t,TJ_\theta^{-p})=d^{-\frac{p}{d}}\|{\rm sym}(T)\|_{L_{\frac{d}{p}}(L_{\infty}(\mathbb{T}_\theta^d)\bar{\otimes} L_\infty(\mathbb{S}^{d-1}))}.$$	
		Replacing $p$ with $\frac{d}{p},$ we complete the proof for positive $T.$
		
		The assertion for general $T$ follows from the equality
		$$\mu(t,TJ_\theta^{-\frac{d}{p}})=\mu(t,|T|J_\theta^{-\frac{d}{p}}),\quad t>0.$$
	\end{proof}

For a compact selfadjoint operator $K$, let $\lambda_j^\pm(K)$,
$j\in\mathbb N_0$, denote its positive eigenvalues and the absolute
values of its negative eigenvalues, respectively, in nonincreasing
order, with multiplicities and zeros appended to finite lists.
Write
\[
 N_\pm(s;K)=\rTr\mathbf1_{(s,\infty)}(\pm K),\qquad
 N(s;|K|)=\rTr\mathbf1_{(s,\infty)}(|K|),\quad s>0,
\]
where $\rTr$ is the ordinary Hilbert space trace.

\begin{thm}\label{thm:signed-sandwich-Weyl}
Let $m>0$, $p=\frac{d}{m}$, and $A=A^*\in\Pi$. Set
$a=\sym(A)$ and $K=J_\theta^{-\frac{m}{2}}AJ_\theta^{-\frac{m}{2}}$. Then
\begin{equation}\label{eq:signed-sandwich-Weyl}
 \lim_{j\to\infty}j^{\frac{1}{p}}\lambda_j^\pm(K)
 =\left(\frac{1}{d}\int_{\mathbb S^{d-1}}
                  \tau(a(s)_\pm^p)\,ds\right)^{\frac{1}{p}},
\end{equation}
where $ds$ is the usual unnormalized surface measure.
\end{thm}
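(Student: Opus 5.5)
The plan is to split $A$ into its positive and negative parts inside $\Pi$ and reduce the signed statement to the singular value formula of Corollary~\ref{key-psd cor}. Since $\Pi$ is a $C^*$-algebra, $A_\pm\in\Pi$ are positive, $A=A_+-A_-$ and $A_+A_-=0$. Compatibility of $\sym$ with functional calculus gives $\sym(A_\pm)=a_\pm$. Set $K_\pm=J_\theta^{-\frac m2}A_\pm J_\theta^{-\frac m2}\geq0$, so $K=K_+-K_-$.

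\textbf{Step 1 (asymptotics of each piece).} Write $K_\pm=(A_\pm^{\frac12}J_\theta^{-\frac m2})^*(A_\pm^{\frac12}J_\theta^{-\frac m2})$. Then $\mu(t,K_\pm)=\mu(t,A_\pm^{\frac12}J_\theta^{-\frac m2})^2$. Apply Corollary~\ref{key-psd cor} to $T=A_\pm^{\frac12}$ with exponent $2p$, noting $\frac{d}{2p}=\frac m2$:
\[
\lim_{t\to\infty}t^{\frac1{2p}}\mu\bigl(t,A_\pm^{\frac12}J_\theta^{-\frac m2}\bigr)=d^{-\frac1{2p}}\|a_\pm^{\frac12}\|_{2p}.
\]
Squaring gives
\[
\lim_{t\to\infty} t^{\frac1p}\mu(t,K_\pm)=\Bigl(\tfrac1d\int_{\mathbb S^{d-1}}\tau(a(s)_\pm^p)\,ds\Bigr)^{\frac1p},
\]
which is the right-hand side of \eqref{eq:signed-sandwich-Weyl}. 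Here the tensor trace is identified with $\int\tau(\cdot)\,ds$ in the usual way.

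\textbf{Step 2 (almost orthogonality).} Show that $K_+K_-$ and $K_-K_+$ lie in $(\mathcal L_{\frac p2,\infty})_0$. Write
\[
K_+K_-=J_\theta^{-\frac m2}A_+J_\theta^{-m}A_-J_\theta^{-\frac m2}.
\]
Commute $J_\theta^{-m}$ past $A_-$. By \eqref{eq:Bessel-small-commutator}, $[A_+,J_\theta^{-m}]\in(\mathcal L_{\frac dm,\infty})_0$. Since $A_+A_-=0$,
\[
K_+K_-=J_\theta^{-\frac m2}[A_+,J_\theta^{-m}]A_-J_\theta^{-\frac m2}.
\]
The product of $J_\theta^{-\frac m2}\in\mathcal L_{2p,\infty}$, an element of $(\mathcal L_{p,\infty})_0$, and another factor in $\mathcal L_{2p,\infty}$ lies in $(\mathcal L_{\frac p2,\infty})_0$ by \eqref{eq:small-ideal-product}. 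The same argument applies to $K_-K_+$.

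\textbf{Step 3 (separating the eigenvalues).} This is the main obstacle: passing from the asymptotics of $K_\pm$ to the positive and negative eigenvalues of $K$. The plan is to use the abstract Birman--Solomyak principle. If $K=K_+-K_-$ with $K_\pm\geq0$ in $\mathcal L_{p,\infty}$ and $K_+K_-\in(\mathcal L_{\frac p2,\infty})_0$, then $\lambda_j^\pm(K)$ have the same $j^{-1/p}$ asymptotics as $\mu(j,K_\pm)$, whenever the latter exist.

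To prove this, I would first try to compare $K_\pm$ with $(K)_\pm$. Note that $K^2=K_+^2+K_-^2$ modulo $(\mathcal L_{\frac p2,\infty})_0$. The next move is to introduce $\tilde K=K_+\oplus(-K_-)$ via the isometry $V\colon H\oplus H\to H$, $V(\xi,\eta)=\xi+\eta$, or alternatively to use spectral projections. Cleaner still, set $P=\mathbf 1_{(0,\infty)}(K)$. Then $K_+=(K_+)$ restricted appropriately: one shows $(1-P)K_+$ is small in $(\mathcal L_{p,\infty})_0$. This follows because $K_+^{\frac12}(1-P)K_+^{\frac12}$ is controlled by $K_+^{\frac12}(1-P)(K_-+\ldots)$. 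It is simpler to argue that $K_+$ and $K_+^{1/2}\,\mathrm{sgn}$-type corrections differ from $K_{+}$ by Lemma~\ref{weak weyl}-negligible terms.

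If this direct route gets messy, I would instead cite the known lemma. For self-adjoint $K=K_1+K_2$ with $K_1K_2\in(\mathcal L_{\frac p2,\infty})_0$, the counting functions satisfy
\[
N_\pm(s;K)=N_\pm(s;K_1)+N_\pm(s;K_2)+o(s^{-p}).
\]
This is Birman--Solomyak's asymptotic orthogonality lemma, and it yields \eqref{eq:signed-sandwich-Weyl} immediately from Step~1, because $K_-$ contributes no positive eigenvalues and $K_+$ contributes no negative ones.
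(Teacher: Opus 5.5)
Your Steps~1 and~2 are correct. The factorisation $K_\pm=X_\pm^*X_\pm$ with $X_\pm=A_\pm^{1/2}J_\theta^{-\frac m2}$, together with Corollary~\ref{key-psd cor} at exponent $2p$, gives exactly the constants $c_\pm^{1/p}$, where $c_\pm=\frac1d\int_{\mathbb S^{d-1}}\tau(a(s)_\pm^p)\,ds$. Also, $A_+A_-=0$ together with \eqref{eq:Bessel-small-commutator} gives $K_+K_-\in(\mathcal L_{\frac p2,\infty})_0$. With a correctly stated Birman--Solomyak asymptotic orthogonality lemma, Step~3 closes the argument. So your proof is sound, but it follows a different route from the paper.

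Step~3 needs two corrections. First, drop the sketch via $P=\mathbf 1_{(0,\infty)}(K)$: it is not an argument. Second, the lemma cannot be stated as the pointwise identity $N_\pm(s;K)=N_\pm(s;K_1)+N_\pm(s;K_2)+o(s^{-p})$, which is false in general. For a counterexample, take $K_1\geq0$ diagonal with eigenvalue $2^{-k}$ of multiplicity $\lfloor 2^{kp}\rfloor$, and $K_2$ diagonal equal to $-2^{-k}/k$ on the $k$-th block. Then $K_2\in(\mathcal L_{p,\infty})_0$ and $K_1K_2\in(\mathcal L_{\frac p2,\infty})_0$, yet at $s=2^{-k}(1-\frac1{2k})$ the two sides differ by about $2^{kp}\approx s^{-p}$ for large $k$. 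The true statement, and all you need, is this: if $s^pN_\pm(s;K_j)$ converge for $j=1,2$, then $s^pN_\pm(s;K_1+K_2)$ converge to the sums of the limits. Your Step~1 supplies exactly this hypothesis, with $K_1=K_+$ and $K_2=-K_-$.

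The paper needs neither your Step~2 nor an external lemma. It applies Corollary~\ref{key-psd cor} to $A$ itself, using $K-AJ_\theta^{-m}=[J_\theta^{-\frac m2},A]J_\theta^{-\frac m2}\in(\mathcal L_{p,\infty})_0$, to obtain $s^pN(s;|K|)\to c_++c_-$. The orthogonality of the positive and negative parts enters only at the symbol level, through $|a|^p=a_+^p+a_-^p$. Your $K_\pm$ are the paper's $Q_\pm$. The inequalities $K\le K_+$ and $-K\le K_-$ give $N_\pm(s;K)\le N(s;K_\pm)$ by min--max, so $\limsup_{s\downarrow0}s^pN_\pm(s;K)\le c_\pm$. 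The identity $N(s;|K|)=N_+(s;K)+N_-(s;K)$ then forces both limits.

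This is an elementary proof of precisely the special case of the Birman--Solomyak lemma you invoke: one nonnegative and one nonpositive summand. The paper's proof is therefore self-contained. Yours extends to arbitrary asymptotically orthogonal decompositions, but it outsources the step you yourself identified as the main obstacle.
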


\begin{proof}
Put $B=J_\theta^{-\frac{m}{2}}\in\mathcal L_{2p,\infty}$, so
$K=K^*\in\mathcal L_{p,\infty}$. By
\eqref{eq:small-ideal-product} and \eqref{eq:Bessel-small-commutator},
\begin{equation}\label{eq:sandwich-total-remainder}
 K-AJ_\theta^{-m}=[B,A]B\in(\mathcal L_{p,\infty})_0.
\end{equation}
Thus Corollary~\ref{key-psd cor} gives
\[
 \lim_{t\to\infty}t^{\frac{1}{p}}\mu(t,K)
 =\left(\frac{1}{d}\int_{\mathbb S^{d-1}}\tau(|a(s)|^p)\,ds\right)^{\frac{1}{p}}.
\]
Continuous functional calculus gives $A_\pm\in\Pi$ and
$\sym(A_\pm)=a_\pm$. Define
\[
 Q_\pm=BA_\pm B,\qquad
 c_\pm=\frac{1}{d}\int_{\mathbb S^{d-1}}\tau(a(s)_\pm^p)\,ds.
\]
Applying \eqref{eq:sandwich-total-remainder} to $A_\pm$ gives
$t^{\frac{1}{p}}\mu(t,Q_\pm)\to c_\pm^{\frac{1}{p}}$.
For a nonincreasing sequence $(\lambda_j)_{j\in\mathbb N_0}$,
put $N(s)=\#\{j:\lambda_j>s\}$.
Then $N(s)>j$ if and only if $\lambda_j>s$.
Together with $|a|^p=a_+^p+a_-^p$, this gives
\begin{equation}\label{eq:three-counting-limits}
 s^pN(s;|K|)\longrightarrow c_++c_-,
 \qquad s^pN(s;Q_\pm)\longrightarrow c_\pm
 \quad(s\downarrow0).
\end{equation}
This equivalence includes zero constants: for any nonnegative
decreasing sequence, $\mu_j=o(j^{-\frac{1}{p}})$ if and only if
$\#\{j:\mu_j>s\}=o(s^{-p})$, by comparing
$\mu_j>s$ with $j<\#\{k:\mu_k>s\}$.

As $K=Q_+-Q_-$, the min--max principle \cite{BS1987} gives
\[
 N_\pm(s;K)\leq N(s;Q_\pm),\qquad
 \limsup_{s\downarrow0}s^pN_\pm(s;K)\leq c_\pm.
\]
On the other hand,
$N(s;|K|)=N_+(s;K)+N_-(s;K)$, so
\[
 \liminf_{s\downarrow0}s^pN_+(s;K)
 \geq c_++c_--\limsup_{s\downarrow0}s^pN_-(s;K)
 \geq c_+.
\]
Interchanging the signs proves
$s^pN_\pm(s;K)\to c_\pm$. Inverting once more gives
\eqref{eq:signed-sandwich-Weyl}.
\end{proof}

\subsection{Spectral asymptotics for traditional classical pseudodifferential operators}
\label{subsec:classical-Weyl}\label{app:classical-Weyl}

The preceding results allow continuous principal symbols.
For traditional classical pseudodifferential operators, the homogeneous
principal symbols are smooth and belong to a proper dense
subalgebra of the continuous symbol algebra. We establish this
connection and deduce \cite[Conjecture~7.1]{MP2023} from
Theorem~\ref{key-psd} and Corollary~\ref{key-psd cor}.

The Fr\'echet topology of $\mathcal S(\qt)$ is defined by
$p_N(x)=\max_{|\alpha|_1\leq N}\|D^\alpha x\|_\infty$.
We use the symbol classes in
\cite[Definitions~3.1 and~3.12]{HLP2019II},
with the above normalization of $D_j$.

\begin{defi}\label{def:usual-symbols}
For $r\in\mathbb R$, the class
$S^r(\mathbb R^d;\mathcal S(\qt))$ consists of smooth maps
$\sigma:\mathbb R^d\to\mathcal S(\qt)$ satisfying
\[
 \|D^\alpha\partial_\xi^\beta\sigma(\xi)\|_\infty
 \leq C_{\alpha,\beta}(1+|\xi|^2)^{\frac{r-|\beta|_1}{2}}
 \quad(\alpha,\beta\in\mathbb N_0^d).
\]
By \cite[Proposition~5.8]{HLP2019II}, its quantization is
\begin{equation}\label{eq:usual-quantization}
 P_\sigma x=\sum_{n\in\mathbb Z^d}\widehat x(n)\sigma(n)U^n,
 \qquad x\in\mathcal S(\qt),
\end{equation}
where the series converges in $\mathcal S(\qt)$ and the symbol
acts by left multiplication. These operators form $\Psi^r(\qt)$.

A symbol is classical if it has smooth homogeneous components
$\sigma_{r-j}:\mathbb R^d\setminus\{0\}\to\mathcal S(\qt)$,
$\sigma_{r-j}(t\xi)=t^{r-j}\sigma_{r-j}(\xi)$ for $t>0$, such that
\begin{equation}\label{eq:classical-expansion}
 \sigma-\sum_{j=0}^{N-1}\chi\sigma_{r-j}
 \in S^{r-N}(\mathbb R^d;\mathcal S(\qt)),\qquad N\geq1.
\end{equation}
Here $\chi$ is smooth, vanishes near $0$, and equals one for
$|\xi|\geq1$. We write $\Psi_{\mathrm{cl}}^r(\qt)$ for this
operator class and $\sigma_r(P)$ for the principal symbol.
\end{defi}

The class $\Psi_{\mathrm{cl}}^r(\qt)$ is denoted by
$\Psi^r(\qt)$ in \cite{MP2023}.
Operators of order zero are bounded by
\cite[Proposition~10.1]{HLP2019II}.
By \cite[Propositions~7.6 and~8.3]{HLP2019II},
composition and adjoints satisfy
\begin{equation}\label{eq:classical-principal-calculus}
 \sigma_{r+s}(PQ)=\sigma_r(P)\sigma_s(Q),\qquad
 \sigma_r(P^*)=\sigma_r(P)^*.
\end{equation}
By \cite[Proposition~5.14]{HLP2019II},
$J_\theta^r\in\Psi_{\mathrm{cl}}^r(\qt)$ has
principal symbol $|\xi|^r$.
If $r>0$ and $R\in\Psi^{-r}(\qt)$, then
$R=(RJ_\theta^r)J_\theta^{-r}\in\mathcal L_{\frac{d}{r},\infty}$.
The first factor is bounded, and
\[
\#\{n\in\mathbb Z^d:(1+|n|^2)^{-\frac r2}>s\}
\leq C_{d,r}s^{-\frac dr},\qquad 0<s<1.
\]

Let $q_M$ be the usual $C^M$-seminorms on $\mathbb S^{d-1}$.
In particular,
$q_0(g)=\|g\|_\infty$.
The projective tensor product
$\mathcal S(\qt)\widehat\otimes_\pi C^\infty(\mathbb S^{d-1})$
is the completion for the seminorms
\[
 \gamma_{N,M}(u)
 =\inf_{u=\sum_jx_j\otimes g_j}
       \sum_jp_N(x_j)q_M(g_j),
 \qquad N,M\in\mathbb N_0,
\]
where the infimum is over all finite tensor representations.
On $C^\infty(\mathbb S^{d-1};\mathcal S(\qt))$, convergence
means uniform convergence of all coordinate derivatives in
every seminorm $p_N$, on the same compact patches.
Under the canonical identification
\[
 C(\qt)\otimes_{\min}C(\mathbb S^{d-1})
 \cong C(\mathbb S^{d-1};C(\qt)),
\]
a finite tensor $u=\sum_jx_j\otimes g_j$ corresponds to
the function $s\mapsto\sum_jg_j(s)x_j$.
The norm on the right is the uniform norm, so
\[
 \|u\|_{\min}
 =\sup_{s\in\mathbb S^{d-1}}
       \Big\|\sum_jg_j(s)x_j\Big\|_\infty.
\]

\begin{prop}\label{prop:classical-in-Pi}
There is a continuous, dense, proper inclusion
\begin{equation}\label{eq:strict-symbol-inclusion}
 \mathcal S(\qt)\widehat\otimes_\pi C^\infty(\mathbb S^{d-1})
 \ \subsetneq\
 C(\qt)\otimes_{\min}C(\mathbb S^{d-1}).
\end{equation}
Every $P\in\Psi_{\mathrm{cl}}^0(\qt)$ belongs to
$\Pi=\Pi(C(\qt),C(\mathbb S^{d-1}))$, and
$\sym(P)=\sigma_0(P)|_{\mathbb S^{d-1}}$.
\end{prop}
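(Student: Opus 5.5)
The proof has three parts: the inclusion \eqref{eq:strict-symbol-inclusion}, membership $P\in\Pi$, and the symbol identity.

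\textbf{The inclusion.} For finite tensors we have
\[
 \|u\|_{\min}=\sup_s\Big\|\sum_j g_j(s)x_j\Big\|_\infty\le\sum_j\|x_j\|_\infty\|g_j\|_\infty=\sum_j p_0(x_j)q_0(g_j).
\]
Taking the infimum gives $\|u\|_{\min}\le\gamma_{0,0}(u)$, so the identity on the algebraic tensor product extends to a continuous map $\iota$ from the completion. It remains to check injectivity, density and properness.
\begin{enumerate}[(i)]
\item \emph{Injectivity.} I would identify $\mathcal S(\qt)\widehat\otimes_\pi C^\infty(\mathbb S^{d-1})$ with $C^\infty(\mathbb S^{d-1};\mathcal S(\qt))$. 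This uses nuclearity: $\mathcal S(\qt)$ is isomorphic to the sequence space $s(\mathbb Z^d)$ via Fourier coefficients. Under this identification $\iota$ becomes the inclusion of smooth functions into continuous $C(\qt)$-valued functions, which is injective.
\item \emph{Density.} This follows because $\mathcal S(\qt)\odot C^\infty(\mathbb S^{d-1})$ is dense in $C(\qt)\odot C(\mathbb S^{d-1})$ for the min norm, since each factor is dense.
\item \emph{Properness.} Take $u=x\otimes 1$ with $x\in C(\qt)\setminus\mathcal S(\qt)$. Its image lies in $C(\mathbb S^{d-1};C(\qt))$ but not in $C^\infty(\mathbb S^{d-1};\mathcal S(\qt))$.
\end{enumerate}

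\textbf{Membership in $\Pi$.} Let $P\in\Psi^0_{\mathrm{cl}}$ with symbol $\sigma$. By \eqref{eq:classical-expansion} with $N=1$, $P-P_{\chi\sigma_0}\in\Psi^{-1}\subset\mathcal L_{d,\infty}$, which is compact. I would also show that this remainder lies in $\Pi$. Set $g:=\sigma_0|_{\mathbb S^{d-1}}\in C^\infty(\mathbb S^{d-1};\mathcal S(\qt))$. By the first part, $g$ is a limit of finite sums $\sum_j x_j\otimes g_j$ in every $\gamma_{N,M}$ seminorm, and in fact it has an absolutely convergent nuclear representation $g=\sum_j\lambda_j x_j\otimes g_j$ with $p_N(x_j),q_M(g_j)$ bounded. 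For $n\ne0$ we have $\chi(n)\sigma_0(n)=\sigma_0(n/|n|)=g(n/|n|)$ because $\chi(n)=1$ for $|n|\ge1$. On $U^n$, $n\neq0$, this gives
\[
 P_{\chi\sigma_0}U^n=\sum_j\lambda_j M_{x_j}\pi_2(g_j)U^n .
\]
The series $\sum_j\lambda_jM_{x_j}\pi_2(g_j)$ converges in operator norm, since $\|M_{x_j}\pi_2(g_j)\|\le\|x_j\|_\infty\|g_j\|_\infty$. It therefore defines an element $Q\in\Pi$ with $\sym(Q)=\sum_j\lambda_j x_j\otimes g_j=g$. Then $P-Q$ is compact: it differs from the compact $P-P_{\chi\sigma_0}$ only by the rank-one correction on $\mathbb C1$.

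\textbf{Main obstacle: the compact remainder lies in $\Pi$.} Since $\ker\sym=\Pi\cap\mathcal K$, the conclusion $P\in\Pi$ needs $\mathcal K\subset\Pi$. I expect to prove this by irreducibility. $\Pi$ contains all $M_x$, and it contains the projections $\pi_2(g)$ for $g$ supported near a point, together with the Fourier multipliers they generate. The algebra $\Pi$ contains the nonzero compact operators $[\pi_1(x),\pi_2(g)]$, by \cite[Corollary~3.7]{SXZ2023}. It acts irreducibly on $L_2(\qt)$: its commutant lies in the commutant of the left regular action, i.e.\ in right multiplications, and these must commute with the Fourier multipliers $\pi_2(g)$. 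A standard $C^*$ fact then gives $\mathcal K\subset\Pi$: an irreducible $C^*$-algebra containing a nonzero compact operator contains all compacts. If irreducibility resists, I would fall back on citing \cite{MSZ2019}, where $\Pi$ is shown to contain $\mathcal K$. With $\mathcal K\subset\Pi$, we get $P=Q+(P-Q)\in\Pi$ and $\sym(P)=\sym(Q)=\sigma_0(P)|_{\mathbb S^{d-1}}$.
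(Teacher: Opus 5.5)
Your proof is correct, but two steps take a different route from the paper.

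\textbf{Where you agree with the paper.} The inclusion argument and the construction of $Q$ are essentially the paper's. The paper approximates $\sigma_0(P)|_{\mathbb S^{d-1}}$ by finite tensors and uses $\|\sum_j M_{x_j}\pi_2(g_j)\|\le\gamma_{0,0}(\sum_j x_j\otimes g_j)$, which does the same job as your Grothendieck nuclear series.

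\textbf{Properness.} You use $x\otimes1$ with $x\in C(\qt)\setminus\mathcal S(\qt)$, which is non-smooth in the $\qt$ variable. The paper uses $|s_1|$, which is non-smooth on the sphere. That is the phenomenon that matters later, when passing from $A$ to $A_\pm$. Either witness proves the strict inclusion.

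\textbf{The inclusion $\mathcal K\subset\Pi$.} This is the real difference. You argue via irreducibility and the general theorem that an irreducible $C^*$-algebra containing a nonzero compact operator contains all compacts. The paper argues directly:
\begin{itemize}
\item $P_0=1-\sum_j(D_jI_\theta^{-1})^2=1-\pi_2(1)$ lies in $\Pi$, and it is the rank-one projection onto $\mathbb C1$;
\item hence the matrix units $M_{U^m}P_0M_{U^n}^*\colon\xi\mapsto\tau((U^n)^*\xi)\,U^m$ lie in $\Pi$;
\item their span is norm dense in $\mathcal K$.
\end{itemize}

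Your route is valid, but as written the commutant computation is unfinished. The quickest way to finish it is the same observation. If $R_y$ commutes with $P_0=1-\pi_2(1)$, then $y=R_y1=R_yP_01=P_0R_y1=\tau(y)1$ is scalar. Once you have $P_0\in\Pi$, you also get a nonzero compact operator for free. You then no longer need the nonvanishing of some $[\pi_1(x),\pi_2(g)]$, which you did not check. At that point the elementary matrix-unit argument makes the general theorem unnecessary.

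\textbf{A minor inaccuracy.} For continuous $g$, $\pi_2(g)$ is a projection only if $g$ takes values in $\{0,1\}$. Since $\mathbb S^{d-1}$ is connected for $d\ge2$, such a $g$ is constant. So $\Pi$ does not contain projections $\pi_2(g)$ with $g$ supported near a point. You do not use that sentence, so it does not affect the proof.
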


\begin{proof}
The completed projective tensor product is canonically
$C^\infty(\mathbb S^{d-1};\mathcal S(\qt))$ as a Fr\'echet
space \cite[Chapters~44, 50--51]{Treves1967}.
The minimal tensor product is
$C(\mathbb S^{d-1};C(\qt))$ with the uniform norm.
The natural inclusion is therefore injective.
It is continuous since $\|u\|_{\min}\leq\gamma_{0,0}(u)$.
It is dense because Fourier polynomials are dense in $C(\qt)$
and smooth functions are dense in $C(\mathbb S^{d-1})$.
To see that the inclusion is proper, consider $a(s)=|s_1|$.
This function belongs to $C(\mathbb S^{d-1};C(\qt))$.
If it were a smooth $\mathcal S(\qt)$-valued function,
then $\tau(a(s))=|s_1|$ would be smooth on the sphere.
This is impossible, since $|s_1|$ is not differentiable
at points where $s_1=0$.

For the coordinate functions $s_j$, we have
$\pi_2(s_j)=D_jI_\theta^{-1}$; see \cite[p.~21]{SXZ2023}.
Hence $P_0=1-\sum_{j=1}^d(D_jI_\theta^{-1})^2\in\Pi$.
The Fourier matrix units $M_{U^m}P_0M_{U^n}^*$,
$m,n\in\mathbb Z^d$, belong to $\Pi$.
Their linear span is norm dense in $\mathcal K(L_2(\qt))$,
so $\mathcal K(L_2(\qt))\subset\Pi$.

Now set $a=\sigma_0(P)|_{\mathbb S^{d-1}}$. Approximate $a$ by
finite tensors $\sum_jx_j\otimes g_j$ in the projective topology.
The estimate
\[
\Big\|\sum_jM_{x_j}\pi_2(g_j)\Big\|_{\Bc(L_2(\qt))}
\leq\gamma_{0,0}\Big(\sum_jx_j\otimes g_j\Big)
\]
shows that the finite sums converge in operator norm to $T_a\in\Pi$.
Moreover, $\sym(T_a)=a$ and
$T_aU^n=a(\frac{n}{|n|})U^n$ for $n\ne0$.
By \eqref{eq:classical-expansion},
$P-P_{\chi\sigma_0(P)}\in\Psi^{-1}(\qt)$ is compact.
The operator $P_{\chi\sigma_0(P)}-T_a$ has rank at most one.
Thus $P-T_a$ is compact, so $P\in\Pi$ and $\sym(P)=a$.
\end{proof}

\begin{cor}\label{cor:McDonald-Ponge-conjecture}
Let $P=P^*\in\Psi_{\mathrm{cl}}^{-m}(\qt)$, $m>0$, have
principal symbol $\rho_{-m}$, and put $p=\frac{d}{m}$. Then
\begin{equation}\label{eq:McDonald-Ponge-conjecture}
 \lim_{j\to\infty}j^{\frac{1}{p}}\lambda_j^\pm(P)
 =\left(\frac{1}{d}\int_{\mathbb S^{d-1}}
                 \tau(\rho_{-m}(s)_\pm^p)\,ds\right)^{\frac{1}{p}}.
\end{equation}
This proves \cite[Conjecture~7.1]{MP2023} without an ellipticity
assumption.
\end{cor}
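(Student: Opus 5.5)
The idea is to reduce Corollary~\ref{cor:McDonald-Ponge-conjecture} to Theorem~\ref{thm:signed-sandwich-Weyl}. We factor $P$ as a symmetric sandwich $J_\theta^{-\frac m2}AJ_\theta^{-\frac m2}$ with $A=A^*\in\Pi$, up to an error in the small ideal $(\mathcal L_{p,\infty})_0$.

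Set $A=J_\theta^{\frac m2}PJ_\theta^{\frac m2}$. By the composition rule \eqref{eq:classical-principal-calculus} and $J_\theta^{r}\in\Psi^r_{\mathrm{cl}}(\qt)$ with principal symbol $|\xi|^r$, we get $A\in\Psi^0_{\mathrm{cl}}(\qt)$ with $\sigma_0(A)=|\xi|^{m}\rho_{-m}(\xi)$. Restricted to the sphere, this is $\rho_{-m}|_{\mathbb S^{d-1}}$. The operator $A$ is bounded by \cite[Proposition~10.1]{HLP2019II}. On $\mathcal S(\qt)$ it is formally selfadjoint, because $P=P^*$ and $J_\theta^{\frac m2}$ is selfadjoint. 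Hence its bounded extension satisfies $A=A^*$.

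Proposition~\ref{prop:classical-in-Pi} then gives $A\in\Pi$ and $\sym(A)=\rho_{-m}|_{\mathbb S^{d-1}}$. Since $J_\theta^{\pm\frac m2}$ are invertible Fourier multipliers, we have exactly $P=J_\theta^{-\frac m2}AJ_\theta^{-\frac m2}$ on $\mathcal S(\qt)$. Both sides are bounded, so the identity holds on $L_2(\qt)$. Thus $P=K$ in the notation of Theorem~\ref{thm:signed-sandwich-Weyl}, with no remainder at all.

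Applying Theorem~\ref{thm:signed-sandwich-Weyl} with $a=\sym(A)$ yields \eqref{eq:McDonald-Ponge-conjecture} directly. Here $a(s)_\pm$ is computed in $C(\qt)$ pointwise in $s$, because the continuous functional calculus in $C(\mathbb S^{d-1};C(\qt))$ acts fibrewise. No ellipticity of $\rho_{-m}$ is needed, since Theorem~\ref{thm:signed-sandwich-Weyl} places no invertibility assumption on $a$. If $a_\pm$ vanishes, the corresponding limit is zero, which is allowed by that theorem.

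The main point to check carefully is the identification of $A$. First, the conjugation $J_\theta^{\frac m2}PJ_\theta^{\frac m2}$ must stay in the classical calculus with the stated principal symbol; this requires only the composition rule quoted from \cite{HLP2019II}. Second, selfadjointness must pass from the dense domain to the bounded extension. The paper's normalisation of $\sigma_0$ restricted to the sphere must also match the symbol appearing in $\sym$. If one prefers to avoid the exact factorisation, an alternative is available. Write $P=P_0J_\theta^{-m}$ with $P_0=PJ_\theta^{m}\in\Psi^0_{\mathrm{cl}}$, and symmetrise using \eqref{eq:sandwich-total-remainder} to absorb the difference into $(\mathcal L_{p,\infty})_0$. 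However, eigenvalue asymptotics are not stable under non-selfadjoint perturbations, so the exact selfadjoint sandwich above is the cleaner route.
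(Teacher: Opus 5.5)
Your proposal is correct and follows the paper's proof essentially step for step. You set $A=J_\theta^{\frac m2}PJ_\theta^{\frac m2}$ and use the composition rule and Proposition~\ref{prop:classical-in-Pi} to obtain $A=A^*\in\Pi$ with $\sym(A)=\rho_{-m}|_{\mathbb S^{d-1}}$; you then extend the exact identity $P=J_\theta^{-\frac m2}AJ_\theta^{-\frac m2}$ from $\mathcal S(\qt)$ to $L_2(\qt)$ by boundedness and apply Theorem~\ref{thm:signed-sandwich-Weyl}, which is exactly what the paper does.
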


\begin{proof}
Set $A=J_\theta^{\frac m2}PJ_\theta^{\frac m2}$ on
$\mathcal S(\qt)$.
By \eqref{eq:classical-principal-calculus}, $A$ has order zero
and principal symbol $|\xi|^m\rho_{-m}(\xi)$.
Its bounded extension is selfadjoint, since $A$ is symmetric
on the dense subspace $\mathcal S(\qt)$.
Proposition~\ref{prop:classical-in-Pi} gives $A\in\Pi$ and
$\sym(A)(s)=\rho_{-m}(s)$. The identity
$P=J_\theta^{-\frac{m}{2}}AJ_\theta^{-\frac{m}{2}}$ holds first on
$\mathcal S(\qt)$ and then on $L_2(\qt)$ by boundedness.
Theorem~\ref{thm:signed-sandwich-Weyl} proves the result.
\end{proof}

The larger symbol algebra in \eqref{eq:strict-symbol-inclusion}
is used when passing from $A$ to $A_\pm$. Smoothness is not
preserved by this operation: $a(s)=s_1$ is smooth, whereas
$a_+(s)=\max\{s_1,0\}$ is not. Continuous functional calculus gives $A_\pm\in\Pi$.

	\section{An approximation expression for commutator}

In this section, we approximate the commutator
$[I_\theta^\epsilon,M_x]$ by its leading term
for $x\in\mathcal S(\mathbb T_\theta^d)$ and $\epsilon<1$.
The difference between the commutator and this leading term
belongs to $(\mathcal L_{\frac{d}{1-\epsilon},\infty})_0$.
Hence the two operators have the same singular value asymptotics.
We will combine this result
with the spectral asymptotics in Section~\ref{sec:PDO-Weyl}
to prove Theorem~\ref{mtb}.
	
	\begin{thm}\label{approximation theorem}
		Let $d\geq 2$ and $\epsilon<1$. For every $x\in \mathcal{S}(\mathbb{T}_\theta ^d)$, denote
		$$T=\sum_{1\leq j \leq d }M_{D_j x }D_j I_\theta^{-1} .$$
		We have
		$$[I_\theta^{\epsilon},M_x] -\epsilon T  J_\theta^{\epsilon-1} \in(\mathcal{L}_{\frac{d}{1-\epsilon},\infty})_0.$$
	\end{thm}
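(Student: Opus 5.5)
We plan to work with the Dirac operator and the double operator integral representation from Section~4. Put $\delta=\epsilon-2+2\lceil\frac{1-\epsilon}{2}\rceil\in[-1,1)$. The proof of Theorem~\ref{mta} gives
\[
 1\otimes[I_\theta^\epsilon,M_x]
 =T^{\D}_{\Psi_\delta}(X_\epsilon)-T^{\D}_{\psi^{[1]}}(Y_\epsilon).
\]
Since $x\in\mathcal S(\qt)$, each $D_jx$ is smooth. We will use Theorem~\ref{commutator-PSD} repeatedly: commuting $M_{D_jx}$ past any power $I_\theta^{\alpha}$ costs one order, so the error lands in $\mathcal L_{\frac{d}{1-\epsilon+1},\infty}\subset(\mathcal L_{\frac{d}{1-\epsilon},\infty})_0$. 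For instance,
\[
 X_{\epsilon,j}-M_{D_jx}I_\theta^{\epsilon-1}\in(\mathcal L_{\frac{d}{1-\epsilon},\infty})_0,
\]
and every term of $Y_{\epsilon,j,l}$ agrees with $M_{D_jx}I_\theta^{\epsilon-1}$ modulo the same ideal. Thus, modulo $(\mathcal L_{\frac{d}{1-\epsilon},\infty})_0$,
\[
 X_\epsilon\equiv\sum_j\gamma_j\otimes M_{D_jx}I_\theta^{\epsilon-1},
 \qquad
 Y_\epsilon\equiv 2(\lceil\tfrac{1-\epsilon}{2}\rceil-1)\sum_j\gamma_j\otimes M_{D_jx}I_\theta^{\epsilon-1}.
\]
The transformers $T^\D_{\Psi_\delta}$ and $T^\D_{\psi^{[1]}}$ are bounded on $\mathcal L_{p,\infty}$ by Lemma~\ref{bounded-Psi}, so they also preserve $(\mathcal L_{p,\infty})_0$, which is the closure of the finite-rank operators. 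This reduces the problem to evaluating these transformers on $V=\sum_j\gamma_j\otimes M_{D_jx}I_\theta^{\epsilon-1}$.

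The key step is to show that $T^{\D}_\phi(V)$ agrees with a "diagonal" operator modulo the small ideal, for $\phi=\Psi_\delta$ and for $\phi=\psi^{[1]}$. Heuristically, $M_{D_jx}$ nearly commutes with functions of $\D$ at the relevant scale, so the double operator integral should be replaced by its values near the diagonal $\lambda\approx\mu$. Exact diagonal evaluation is wrong, however, because $\D$ has spectrum of both signs. Write $\D=|\D|\,\mathrm{sgn}(\D)$. On the pair $(\lambda,\mu)$, the functions $\psi^{[1]}$ and $\Psi_\delta$ equal $\mathrm{sgn}\lambda$ (respectively $\delta\,\mathrm{sgn}\lambda$) on the same-sign region and something else on the opposite-sign region.

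We will therefore split $V$ using the spectral projections $E_\pm$ of $\D$. On the same-sign blocks $E_\pm VE_\pm$, we factor the symbol as a function of $\lambda/\mu$, as in the proof of Lemma~\ref{bounded-Psi}. We then use the fact that $[|\D|^{i s},1\otimes M_{D_jx}]$ is small in a suitable sense, for instance via Theorem~\ref{commutator-PSD} applied to $I_\theta^{is}$ and Fourier integral representations of $f_\delta\circ\exp$, to get $T_\phi(E_\pm VE_\pm)\equiv\phi(1,1)E_\pm VE_\pm$. The cross blocks require computing $E_+(\gamma_j\otimes 1)E_-$. We expect these to involve the terms $\frac{D_j}{|\D|}$ that recombine into $T=\sum_jM_{D_jx}D_jI_\theta^{-1}$; here we would use $\mathrm{sgn}(\D)=\D|\D|^{-1}\in\Pi\otimes M_{\mathbf d}$ and Corollary~\ref{commutator-PSD-dp}.

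Finally we collect the coefficients: $\delta$ from $\Psi_\delta$ and $2n=2(\lceil\frac{1-\epsilon}{2}\rceil-1)$ from $\psi^{[1]}$, which sum to $\epsilon=\delta-2n$. Taking the trace over $\mathbb C^{\mathbf d}$, or compressing with $\frac{1}{\mathbf d}\mathrm{tr}$, should then yield $\epsilon\sum_jM_{D_jx}D_jI_\theta^{-1}I_\theta^{\epsilon-1}$. The last step replaces $I_\theta^{\epsilon-1}$ by $J_\theta^{\epsilon-1}$, which is harmless since their difference lies in $\mathcal L_{\frac{d}{3-\epsilon},\infty}$.

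The main obstacle is the claim that $T^\D_\phi(V)-\phi(\text{diag})V$ lies in the separable part. A double operator integral of a commutator-small operator is not automatically small, so we first try the explicit Fourier matrix instead. Using Lemma~\ref{lem:matrix}, the entries of $[I_\theta^\epsilon,M_x]$ are $\sigma\,\widehat x(k)\bigl(a_\epsilon(n+k)-a_\epsilon(n)\bigr)$. A Taylor expansion gives
\[
 a_\epsilon(n+k)-a_\epsilon(n)=\epsilon|n|^{\epsilon-2}\langle n,k\rangle+O\bigl(|k|^2|n|^{\epsilon-2}\bigr),
\]
and the main term is exactly the matrix of $\epsilon T I_\theta^{\epsilon-1}$. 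Since $\widehat x$ decays rapidly, we would sum over $k$, handling each fixed $k$ as a diagonal multiplier composed with $M_{U^k}$. The remainder for fixed $k$ is then $O(|k|^2)$ times an element of $\mathcal L_{\frac{d}{2-\epsilon},\infty}$, with the region $|n|\le2|k|$ contributing finite rank. This yields the conclusion by a convergent sum in the quasi-Banach space and possibly bypasses the double operator integral entirely.
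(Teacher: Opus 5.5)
Your Fourier-matrix argument in the last paragraph is a correct proof on its own, and it takes a genuinely different route from the paper. Drop the double operator integral sketch that precedes it: its key step, that $T^{\D}_\phi(V)$ agrees with a ``diagonal'' evaluation modulo $(\mathcal L_{\frac{d}{1-\epsilon},\infty})_0$, is never proved. (If you did pursue it, the cross blocks are the negligible ones, because $\Psi_\delta$ and $\psi^{[1]}$ vanish on $\mu=-\lambda$. The operator $T$ then comes from $E_+VE_+-E_-VE_-=\frac12\{\mathrm{sgn}(\D),V\}$ together with $\gamma_i\gamma_j+\gamma_j\gamma_i=2\delta_{i,j}$.)

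The summation step needs one more line to be airtight. On $|n|\le2|k|$ the values of $r_k$ are $O(1+|k|)$, so your two regions give $|r_k(n)|\lesssim(1+|k|)^{3-\epsilon}(1+|n|)^{\epsilon-2}$ for all $n$. Hence the remainder equals $\big(\sum_k\widehat x(k)M_{U^k}\tilde r_k(\nabla)\big)w(\nabla)$, where $w(n)=(1+|n|)^{\epsilon-2}$ and $\tilde r_k=r_k/w$. The bracket converges in operator norm by the rapid decay of $\widehat x$, and $w(\nabla)\in\mathcal L_{\frac{d}{2-\epsilon},\infty}$. This avoids summing in a quasi-Banach space, which would otherwise need Aoki--Rolewicz, since $\frac{d}{2-\epsilon}$ can be below $1$.

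\textbf{How the paper argues.} It proceeds by functional calculus.
\begin{itemize}
\item For $0<\epsilon<1$, it represents $J_\theta^\epsilon$ by a resolvent integral and expands $[(1+\lambda-\Delta)^{-1},M_x]$ to second order. It bounds the double-commutator term in $\mathcal L_{d,\infty}$ (Lemma~\ref{second summand lemma}) and evaluates the first-order term as $\frac{\epsilon}{2}[\Delta,M_x]J_\theta^{\epsilon-2}$.
\item For $\epsilon<0$, it writes $\epsilon=-m\delta$ and telescopes $[J_\theta^{-m\delta},M_x]$. It then uses the weak-type H\"older inequality together with \eqref{eq:Bessel-small-commutator}, which rests on Theorem~\ref{commutator-PSD}.
\end{itemize}

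\textbf{How your route differs.} You use the lattice structure of $\qt$ directly. Since $M_{U^k}$ is unitary and $M_{U^k}^*I_\theta^\epsilon M_{U^k}$ is the Fourier multiplier with symbol $a_\epsilon(\cdot+k)$, each $[I_\theta^\epsilon,M_{U^k}]$ is exactly $M_{U^k}$ times a scalar multiplier. So $\theta$ is absorbed into unitaries, and everything reduces to a Taylor estimate on $\mathbb Z^d$ plus a finite-rank low-frequency piece. This buys three things:
\begin{itemize}
\item it treats all $\epsilon<1$ uniformly;
\item it needs none of the $C^*$-algebraic commutator estimates;
\item it gives the sharper remainder class $\mathcal L_{\frac{d}{2-\epsilon},\infty}$, whereas the paper obtains $\mathcal L_{d,\infty}$ for $0<\epsilon<1$ and only the separable part for $\epsilon<0$.
\end{itemize}
The paper's argument follows \cite{MSX2020,FSZ2024}. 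It is the one that transfers to $\mathbb R^d$ and quantum Euclidean spaces, where there is no discrete frequency lattice and the low-frequency region is not finite-dimensional.
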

	
In the classical (commutative) setting, Theorem
\ref{approximation theorem} follows from standard
pseudodifferential operator calculus. For smooth $x$, $M_x$
is a pseudodifferential operator of order $0$, while
$J_0^{\epsilon}$ is of order $\epsilon$.
The asymptotic expansion of the commutator
$[J_0^{\epsilon},M_x]$ shows that its
principal symbol coincides with that of
$\epsilon \sum_{j=1}^d
    M_{D_j x} D_j I_0^{-1}
    J_0^{\epsilon-1}.$

Here we give a functional-calculus proof of Theorem
\ref{approximation theorem} on quantum tori, following
the approach in \cite{MSX2020,FSZ2024}.
	
	\begin{lem}\label{second summand lemma}
		Let $\epsilon\in(0,1).$ For $x \in \mathcal{S}(\qt)$ we have
		$$\int_0^{\infty}\frac{1}{1+\lambda-\Delta}  \Big[   \Delta, [\Delta, M_x]    \Big] \frac{1}{(1+\lambda-\Delta)^2}\cdot \lambda^{\frac{\epsilon}{2}}d\lambda\in\mathcal{L}_{d,\infty}.$$
	\end{lem}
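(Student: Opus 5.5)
The plan is to expand the double commutator explicitly as a finite sum of terms of the form $M_y D^\beta$ with $y\in\mathcal S(\qt)$ and $|\beta|_1\le 2$. Each such term then gives an integrand whose $\mathcal L_{d,\infty}$-norm decays like $(1+\lambda)^{-3/2}$. Since $\epsilon<1$, this is integrable against $\lambda^{\frac{\epsilon}{2}}\,d\lambda$.

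\emph{Step 1: algebra.} Since each $D_j$ is a derivation, $D_j(x\eta)=(D_jx)\eta+xD_j\eta$ and $\Delta=-\sum_j D_j^2$. Hence, on $\mathcal S(\qt)$,
\[
 [\Delta,M_x]=M_{\Delta x}-2\sum_{j=1}^d M_{D_jx}D_j .
\]
Applying the same identity to each $M_{\Delta x}$ and $M_{D_jx}$, and using that $D_j$ commutes with $\Delta$, gives
\[
 [\Delta,[\Delta,M_x]]
 =M_{\Delta^2x}-4\sum_{j}M_{D_j\Delta x}D_j
   +4\sum_{j,k}M_{D_kD_jx}D_kD_j .
\]
This is a finite sum $\sum_\beta M_{y_\beta}D^\beta$ with $y_\beta\in\mathcal S(\qt)$ and $|\beta|_1\le2$.

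\emph{Step 2: pointwise estimate.} For each term, factor
\[
 (1+\lambda-\Delta)^{-1}M_{y}D^\beta(1+\lambda-\Delta)^{-2}
 =\bigl[(1+\lambda-\Delta)^{-1}J_\theta^{1}\bigr]\cdot
  \bigl[J_\theta^{-1}M_{y}\bigr]\cdot
  \bigl[D^\beta(1+\lambda-\Delta)^{-2}\bigr].
\]
The middle factor lies in $\mathcal L_{d,\infty}$, with norm at most $\|J_\theta^{-1}\|_{d,\infty}\|y\|_\infty$, since $J_\theta^{-1}\in\mathcal L_{d,\infty}$ by lattice point counting. The outer factors are Fourier multipliers, and spectral calculus bounds their operator norms:
\[
 \sup_{n}\frac{(1+|n|^2)^{1/2}}{1+\lambda+|n|^2}\le(1+\lambda)^{-\frac12},
 \qquad
 \sup_n\frac{|n|^{|\beta|_1}}{(1+\lambda+|n|^2)^{2}}\le(1+\lambda)^{-1}.
\]
The second bound uses $|\beta|_1\le2$. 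The ideal property therefore gives
\[
 \bigl\|\text{integrand}\bigr\|_{d,\infty}\le C_x(1+\lambda)^{-\frac32}.
\]

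\emph{Step 3: integration.} Because $d\ge2>1$, the quasi-norm on $\mathcal L_{d,\infty}$ is equivalent to a Banach norm, so Bochner integrals of norm-integrable $\mathcal L_{d,\infty}$-valued functions stay in $\mathcal L_{d,\infty}$. The integrand is norm continuous in $\lambda$, so it is strongly measurable. Its norm is bounded by $C_x\lambda^{\frac{\epsilon}{2}}(1+\lambda)^{-\frac32}$, which is integrable on $(0,\infty)$ because $\frac{\epsilon}{2}-\frac32<-1$ for $\epsilon<1$.

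\emph{Step 4: identification.} The same bound in operator norm shows that the integral converges absolutely in $\mathcal B(L_2(\qt))$. So the integral in the statement, understood strongly on $\mathcal S(\qt)$, coincides with the $\mathcal L_{d,\infty}$-valued Bochner integral. Summing over the finitely many $\beta$ then proves the lemma.

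The only delicate point is Step 3, namely passing from pointwise weak-ideal bounds to the integral. I would handle it by working with the equivalent norm $\sup_t (t+1)^{\frac1d-1}\sum_{k\le t}\mu(k,\cdot)$ on $\mathcal L_{d,\infty}$, which is valid because $d>1$.
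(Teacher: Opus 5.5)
Your proof is correct and follows essentially the same route as the paper. Both use the same expansion $[\Delta,[\Delta,M_x]]=M_{\Delta^2x}-4\sum_j M_{D_j\Delta x}D_j+4\sum_{j,k}M_{D_kD_jx}D_kD_j$, both get the same $O\bigl((1+\lambda)^{-3/2}\bigr)$ bound in $\mathcal L_{d,\infty}$ by putting $(1+\lambda-\Delta)^{-1}$ into $\mathcal L_{d,\infty}$ through $J_\theta^{-1}$ and the remaining factors into operator norm, and both integrate against $\lambda^{\frac{\epsilon}{2}}$. The paper only asserts the final integration step, whereas you justify it (equivalent Banach norm on $\mathcal L_{d,\infty}$, norm continuity of the integrand, identification with the $\mathcal B(L_2)$ integral).
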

	\begin{proof} We write
$$[\Delta,M_x]=-\sum_{k=1}^d[D_k^2,M_x]=-\sum_{k=1}^d \big(D_k[D_k,M_x]+[D_k,M_x]D_k\big)=$$
		$$=-\sum_{k=1}^d \big(D_kM_{D_kx}+M_{D_kx}D_k\big)=-2\sum_{k=1}^dM_{D_kx}D_k-\sum_{k=1}^d[D_k,M_{D_kx}]=M_{\Delta x}-2\sum_{k=1}^dM_{D_kx}D_k.$$
		Thus,
		$$[\Delta,[\Delta,M_x]]=[\Delta,M_{\Delta x}]-2\sum_{k=1}^d[\Delta,M_{D_kx}]D_k=4\sum_{k,l=1}^dM_{D_kD_lx}D_kD_l-4\sum_{k=1}^dM_{D_k\Delta x}D_k+M_{\Delta^2x}.$$

We clearly have
\begin{align*}
 \Big\|\frac{1}{1+\lambda-\Delta}M_{D_kD_lx}
          \frac{D_kD_l}{(1+\lambda-\Delta)^2}\Big\|_{d,\infty}
 &\leq\Big\|\frac{1}{1+\lambda-\Delta}\Big\|_{d,\infty}
       \|M_{D_kD_lx}\|_{\infty}
       \Big\|\frac{D_kD_l}{(1+\lambda-\Delta)^2}\Big\|_{\infty}\\
 &\leq(1+\lambda)^{-\frac{1}{2}}\|J_\theta^{-1}\|_{d,\infty}
       \cdot\|x\|_{W^2_{\infty}(\qt)}\cdot(1+\lambda)^{-1},
\end{align*}
\begin{align*}
 \Big\|\frac{1}{1+\lambda-\Delta}M_{D_k\Delta x}
          \frac{D_k}{(1+\lambda-\Delta)^2}\Big\|_{d,\infty}
 &\leq\Big\|\frac{1}{1+\lambda-\Delta}\Big\|_{d,\infty}
       \|M_{D_k\Delta x}\|_{\infty}
       \Big\|\frac{D_k}{(1+\lambda-\Delta)^2}\Big\|_{\infty}\\
 &\leq(1+\lambda)^{-\frac{1}{2}}\|J_\theta^{-1}\|_{d,\infty}
       \cdot\|x\|_{W^3_{\infty}(\qt)}
       \cdot(1+\lambda)^{-\frac{3}{2}},
\end{align*}
and
\begin{align*}
 \Big\|\frac{1}{1+\lambda-\Delta}M_{\Delta^2x}
          \frac{1}{(1+\lambda-\Delta)^2}\Big\|_{d,\infty}
 &\leq\Big\|\frac{1}{1+\lambda-\Delta}\Big\|_{d,\infty}
       \|M_{\Delta^2x}\|_{\infty}
       \Big\|\frac{1}{(1+\lambda-\Delta)^2}\Big\|_{\infty}\\
 &\leq(1+\lambda)^{-\frac{1}{2}}\|J_\theta^{-1}\|_{d,\infty}
       \cdot\|x\|_{W^4_{\infty}(\qt)}\cdot(1+\lambda)^{-2}.
\end{align*}
		Thus,
		$$\Big\|\frac{1}{1+\lambda-\Delta}  \Big[   \Delta, [\Delta, M_x]    \Big] \frac{1}{(1+\lambda-\Delta)^2}\Big\|_{d,\infty}\leq \frac{9}{(1+\lambda)^{\frac{3}{2}}}\|J_\theta^{-1}\|_{d,\infty}\cdot\|x\|_{W^4_{\infty}(\qt)}$$
		and
		$$\int_0^{\infty}\Big\|\frac{1}{1+\lambda-\Delta}  \Big[   \Delta, [\Delta, M_x]    \Big] \frac{1}{(1+\lambda-\Delta)^2}\cdot \lambda^{\frac{\epsilon}{2}}\Big\|_{d,\infty}d\lambda<\infty.$$
		This yields the assertion.
	\end{proof}
	
	\begin{proof}[Proof of Theorem \ref{approximation theorem} for $0<\epsilon<1$] By the functional calculus, we have (see, e.g., \cite[Theorem 1, p. 232]{BS1987}) 
		\begin{equation}\label{power-functional-cal eq0}
			J_\theta^{\epsilon}=\frac{\sin(\frac{\pi\epsilon}{2})}{\pi}\int_0^{\infty}\frac{1-\Delta}{1+\lambda-\Delta}\lambda^{\frac{\epsilon}{2}-1}d\lambda.\end{equation}
		Thus,
		$$[J_\theta^{\epsilon},M_x]=  \frac{\sin(\frac{\pi\epsilon}{2})}{\pi}\int_0^{\infty} \big[  \frac{1-\Delta}{1+\lambda-\Delta},M_x\big]   \lambda^{\frac{\epsilon}{2}-1}d\lambda.$$
		It is immediate that
		$$\big[\frac{1-\Delta}{1+\lambda-\Delta},M_x\big]=\big[1-\frac{\lambda}{1+\lambda-\Delta},M_x\big]=-\big[\frac{1}{1+\lambda-\Delta},M_x\big]\cdot\lambda.$$
		Hence,
		$$[J_\theta^{\epsilon},M_x]=-\frac{\sin(\frac{\pi\epsilon}{2})}{\pi}\int_0^{\infty}\big[\frac{1}{1+\lambda-\Delta},M_x   \big]\lambda^{\frac{\epsilon}{2}}d\lambda.$$
		For $x \in \mathcal{S}(\qt),$ we have
		$$\big[\frac{1}{1+\lambda-\Delta},M_x\big]=[\Delta, M_x] \frac{1}{(1+\lambda-\Delta)^{2}}+\frac{1}{1+\lambda-\Delta}   \Big[   \Delta, [\Delta, M_x]    \Big]\frac{1}{(1+\lambda-\Delta)^2}.$$
		Thus,
\begin{align*}
[J_\theta^{\epsilon},M_x]
={}&-[\Delta,M_x]\cdot \frac{\sin(\frac{\pi\epsilon}{2})}{\pi}\int_0^{\infty}\frac{1}{(1+\lambda-\Delta)^2}\lambda^{\frac{\epsilon}{2}}d\lambda\\
&-\frac{\sin(\frac{\pi\epsilon}{2})}{\pi}\int_0^{\infty}\frac{1}{1+\lambda-\Delta}  \Big[   \Delta, [\Delta, M_x]\Big] \frac{1}{(1+\lambda-\Delta)^2}\cdot \lambda^{\frac{\epsilon}{2}}d\lambda.
\end{align*}
		It follows from Lemma \ref{second summand lemma} that
		$$[J_\theta^{\epsilon},M_x]+[\Delta,M_x]\cdot \frac{\sin(\frac{\pi\epsilon}{2})}{\pi}\int_0^{\infty}\frac{1}{(1+\lambda-\Delta)^2}\lambda^{\frac{\epsilon}{2}}d\lambda\in\mathcal{L}_{d,\infty}.$$
		By the functional calculus, we have
		$$\frac{\sin(\frac{\pi\epsilon}{2})}{\pi}\int_0^{\infty}\frac{1}{(1+\lambda-\Delta)^2}\lambda^{\frac{\epsilon}{2}}d\lambda=\frac{\epsilon}{2}J_\theta^{\epsilon-2}.$$
		Thus,
		$$[J_\theta^{\epsilon},M_x]+\frac{\epsilon}{2}[\Delta,M_x]J_\theta^{\epsilon-2}\in\mathcal{L}_{d,\infty}.$$
		Since
		$$J_\theta^{\epsilon}-I_\theta^{\epsilon}\in\mathcal{L}_{d,\infty},\quad [\Delta,M_x]J_\theta^{\epsilon-2}+2TJ_\theta^{\epsilon-1}\in\mathcal{L}_{d,\infty},$$
		the assertion follows.
	\end{proof}

	\begin{proof}[Proof of Theorem \ref{approximation theorem} for $\epsilon<0$]
		
		Fix $m\in\mathbb{N}$ and $\delta\in(0,1)$ such that $\epsilon=-m\delta.$ Using the equality
		$$[A,B^{-m}]=\sum_{l=0}^{m-1}B^{-l}[A,B^{-1}]B^{l-m+1}=-\sum_{l=0}^{m-1}B^{-l-1}[A,B]B^{l-m},$$
		we write
		$$[J_\theta^{\epsilon},M_x]=[J_\theta^{-m\delta},M_x]=-\sum_{l=0}^{m-1}J_\theta^{-(l+1)\delta}[J_\theta^{\delta},M_x]J_\theta^{(l-m)\delta}.$$
		Using Theorem \ref{approximation theorem} for $\delta,$ we write
		$$[J_\theta^{\delta},M_x]-\delta TJ_\theta^{\delta-1}\in(\mathcal L_{\frac{d}{1-\delta},\infty})_0.$$
		By H\"older inequality \eqref{weak-type-holder},
\begin{align*}
 &J_\theta^{-(l+1)\delta}[J_\theta^{\delta},M_x]
       J_\theta^{(l-m)\delta}-\delta J_\theta^{-(l+1)\delta}
       \cdot TJ_\theta^{\delta-1}\cdot J_\theta^{(l-m)\delta}\\
= &J_\theta^{-(l+1)\delta}
       \cdot\Big([J_\theta^{\delta},M_x]-\delta TJ_\theta^{\delta-1}\Big)
       \cdot J_\theta^{(l-m)\delta}\\
\in  &\mathcal{L}_{\frac{d}{(l+1)\delta},\infty}
       \cdot(\mathcal{L}_{\frac{d}{1-\delta},\infty})_0
       \cdot\mathcal{L}_{\frac{d}{(m-l)\delta},\infty}\\
\subset  &(\mathcal{L}_{\frac{d}{1+m\delta},\infty})_0
       =(\mathcal{L}_{\frac{d}{1-\epsilon},\infty})_0.
\end{align*}
		Thus,
		$$[J_\theta^{\epsilon},M_x]+\delta\sum_{l=0}^{m-1}J_\theta^{-(l+1)\delta}TJ_\theta^{(l-m+1)\delta-1}\in(\mathcal{L}_{\frac{d}{1-\epsilon},\infty})_0.$$
		
		By \eqref{eq:Bessel-small-commutator},
		$$[J_\theta^{-(l+1)\delta},T]\in(\mathcal{L}_{\frac{d}{(l+1)\delta},\infty})_0.$$
		Thus,
		$$J_\theta^{-(l+1)\delta}TJ_\theta^{(l-m+1)\delta-1}-TJ_\theta^{-m\delta-1}\in(\mathcal{L}_{\frac{d}{1-\epsilon},\infty})_0.$$
		Consequently,
		$$[J_\theta^{\epsilon},M_x]+m\delta TJ_\theta^{-m\delta-1}\in(\mathcal{L}_{\frac{d}{1-\epsilon},\infty})_0.$$
		Since
		$$J_\theta^{\epsilon}-I_\theta^{\epsilon}\in(\mathcal{L}_{\frac{d}{1-\epsilon},\infty})_0,$$
		the assertion follows.
	\end{proof}

	\section{Proofs of the main theorems}
	
In this section we prove Theorems~\ref{mtb} and~\ref{mtd}.

		The next lemma shows that the right hand side norm in \eqref{mtb-eq} is indeed equivalent to the homogeneous Sobolev norm of $x\in \dot{W}^{1}_{\frac{d}{1-\epsilon}}(\qt)$.
	
			\begin{lem}\label{mtc}
		Let $1\leq p < \infty $. There exists a strictly positive constant $c_{d,p}$ such that, for every $x\in \dot{W}^{1}_{p}(\qt)$,
		$$c_{d,p}\|x\|_{\dot{W}^{1}_{p}(\qt)} \leq  \|  \sum_{1\leq j \leq d }D_jx\otimes s_j \|_{L_{p}(L_{\infty}(\mathbb{T}_\theta^d)\bar{\otimes} L_\infty(\mathbb{S}^{d-1}))} \leq d^{1-\frac{1}{p}}\Big(\int_{\mathbb{S}^{d-1}}|s_1|^p d s\Big)^{\frac{1}{p}} \|x\|_{\dot{W}^{1}_{p}(\qt)}.$$
	\end{lem}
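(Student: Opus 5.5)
The plan is to handle the two inequalities separately. The upper bound follows from the triangle inequality in $L_p$ and an orthogonal change of variables on the sphere. The lower bound is a duality argument: pair $\sum_j D_jx\otimes s_j$ against $y\otimes s_k$ and integrate over the sphere.

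Throughout, write $F=\sum_{j}D_jx\otimes s_j$, viewed as the function $s\mapsto\sum_j s_jD_jx$ with values in $L_p(\qt)$. Its norm is
\[
\|F\|_{L_p}^p=\int_{\mathbb S^{d-1}}\Big\|\sum_j s_jD_jx\Big\|_p^p\,ds .
\]
This formula uses that $L_p(L_\infty(\qt)\bar\otimes L_\infty(\mathbb S^{d-1}))=L_p(\mathbb S^{d-1};L_p(\qt))$, because the second factor is commutative.

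\emph{Upper bound.} For each fixed $s$ we have $\|\sum_j s_jD_jx\|_p\le\sum_j|s_j|\|D_jx\|_p$. Minkowski's inequality in $L_p(\mathbb S^{d-1})$ then gives
\[
\|F\|_{L_p}\le\sum_j\|D_jx\|_p\Big(\int|s_j|^pds\Big)^{1/p}.
\]
By rotation invariance of surface measure, each factor $\big(\int|s_j|^pds\big)^{1/p}$ equals $\big(\int|s_1|^pds\big)^{1/p}$. Finally, H\"older's inequality on $d$ terms gives $\sum_j\|D_jx\|_p\le d^{1-\frac1p}\|x\|_{\dot W^1_p}$. This is exactly the stated constant.

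\emph{Lower bound.} Fix $k$. For $y\in L_{p'}(\qt)$ with $\|y\|_{p'}\le1$, Fubini and the orthogonality relations $\int s_js_k\,ds=\frac{|\mathbb S^{d-1}|}{d}\delta_{jk}$ give
\[
\frac{|\mathbb S^{d-1}|}{d}\tau\bigl((D_kx)y\bigr)=\int_{\mathbb S^{d-1}}s_k\,\tau\Big(\sum_j s_jD_jx\,y\Big)\,ds .
\]
By H\"older's inequality in the trace and then in $s$, the absolute value of the right-hand side is at most $\|s_k\|_{L_{p'}(\mathbb S^{d-1})}\|F\|_{L_p}$. Taking the supremum over $y$ gives
\[
\|D_kx\|_p\le\frac{d}{|\mathbb S^{d-1}|}\|s_1\|_{L_{p'}(\mathbb S^{d-1})}\|F\|_{L_p}.
\]
The duality step $\|z\|_p=\sup\{|\tau(zy)|:\|y\|_{p'}\le1\}$ holds for $1\le p<\infty$; at $p=1$ we take $p'=\infty$ and $y\in L_\infty(\qt)$. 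Summing the $p$-th powers over $k$ yields $c_{d,p}\|x\|_{\dot W^1_p}\le\|F\|_{L_p}$.

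The only delicate point is measurability and the identification of the tensor $L_p$-norm with the Bochner-type integral. I expect this to be the main (mild) obstacle. The function $s\mapsto\sum_js_jD_jx$ is continuous into $L_p(\qt)$, since it is a finite linear combination with continuous scalar coefficients. The identification can therefore be justified directly, by expressing $|F|^p$ through functional calculus fibrewise in $s$ and taking the trace $\tau\otimes\int ds$.
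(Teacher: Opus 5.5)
Your proof is correct. The upper bound is the paper's argument, namely the triangle inequality, which you make explicit via rotation invariance of $ds$ and H\"older over the $d$ terms. Your lower bound, however, takes a different route.

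The paper assumes without loss of generality that $\|D_1x\|_p\geq\|D_jx\|_p$ for all $j$. It then restricts the integral to the polar caps $\Omega=\{s:|s_1|\geq d/\sqrt{d^2+1}\}$, where $|s_j|\leq 1/\sqrt{d^2+1}$ for $j\geq2$. Applying the reverse triangle inequality pointwise in $s$ gives $\|\sum_js_jD_jx\|_p\geq\|D_1x\|_p/\sqrt{d^2+1}$ on $\Omega$, and hence $c_{d,p}=d^{-\frac1p}m(\Omega)^{\frac1p}/\sqrt{d^2+1}$.

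You instead use the orthogonality relations $\int s_js_k\,ds=\frac{|\mathbb S^{d-1}|}{d}\delta_{jk}$ to recover each $D_kx$ linearly from $F$, and then apply H\"older in $s$. This controls every component separately, with no reduction to the largest one. It also gives the explicit constant $c_{d,p}=|\mathbb S^{d-1}|\,d^{-1-\frac1p}\|s_1\|_{L_{p'}(\mathbb S^{d-1})}^{-1}$. The paper's argument, by contrast, is purely metric: it needs nothing beyond the triangle inequality and the positivity of the measure of a cap.

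You can also drop the duality step. Since the sum is finite, $D_kx=\frac{d}{|\mathbb S^{d-1}|}\int_{\mathbb S^{d-1}}s_k\sum_js_jD_jx\,ds$ as a Bochner integral in $L_p(\qt)$. Minkowski's integral inequality followed by H\"older in $s$ then gives the same bound.

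You flag as the delicate point the identification of the tensor $L_p$-norm with $\int_{\mathbb S^{d-1}}\|\sum_js_jD_jx\|_p^p\,ds$. The paper uses this identification without comment as well, and it is standard for $p<\infty$.
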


	\begin{proof} 
	The second inequality in Lemma \ref{mtc} follows immediately from the triangle inequality. In order to prove the first inequality, for a fixed $x\in  \dot{W}^1_p(\qt)$, we assume without loss of generality that
		$$\|D_1 x \|_p  \geq \|D_j x\|_p ,\quad  2\leq j \leq d.$$
		Let
		$$\Omega = \{s\in \mathbb{S}^{d-1}  : |s_1| \geq \sqrt{\frac{d^2}{d^2+1}}\}.$$
		We have
\begin{align*}
 \|\sum_{j=1}^dD_jx\otimes s_j\|^p_{L_p(L_{\infty}(\mathbb{T}_\theta^d)\bar{\otimes} L_\infty(\mathbb{S}^{d-1}))}
 &=\int_{\mathbb{S}^{d-1}}
   \|\sum_{j=1}^ds_jD_jx\|^p_{L_p(L_{\infty}(\mathbb{T}_\theta^d))}\,ds\\
 &\geq\int_{\Omega}
   \|\sum_{j=1}^ds_jD_jx\|^p_{L_p(L_{\infty}(\mathbb{T}_\theta^d))}\,ds.
\end{align*}
		If $s\in\Omega,$ then it follows from the triangle inequality that
\begin{align*}
 \|\sum_{j=1}^ds_jD_jx\|_{L_p(L_{\infty}(\mathbb{T}_\theta^d))}
 &\geq\|s_1D_1x\|_{L_p(L_{\infty}(\mathbb{T}_\theta^d))}
       -\sum_{j=2}^d\|s_jD_jx\|_{L_p(L_{\infty}(\mathbb{T}_\theta^d))}\\
 &\geq\frac{d}{\sqrt{d^2+1}}
       \|D_1x\|_{L_p(L_{\infty}(\mathbb{T}_\theta^d))}
       -\frac{1}{\sqrt{d^2+1}}
       \sum_{j=2}^d\|D_jx\|_{L_p(L_{\infty}(\mathbb{T}_\theta^d))}\\
 &\geq\frac{d}{\sqrt{d^2+1}}
       \|D_1x\|_{L_p(L_{\infty}(\mathbb{T}_\theta^d))}
       -\frac{1}{\sqrt{d^2+1}}
       \sum_{j=2}^d\|D_1x\|_{L_p(L_{\infty}(\mathbb{T}_\theta^d))}\\
 &=\frac{1}{\sqrt{d^2+1}}
       \|D_1x\|_{L_p(L_{\infty}(\mathbb{T}_\theta^d))}.
\end{align*}
		Thus,
		$$\|\sum_{j=1}^dD_jx\otimes s_j\|_{L_p(L_{\infty}(\mathbb{T}_\theta^d)\bar{\otimes} L_\infty(\mathbb{S}^{d-1}))}\geq \frac{1}{\sqrt{d^2+1}}\|D_1x\|_{L_p(L_{\infty}(\mathbb{T}_\theta^d))}\cdot m(\Omega)^{\frac{1}{p}}\geq\frac{d^{-\frac{1}{p}}m(\Omega)^{\frac{1}{p}}}{\sqrt{d^2+1}}\|x\|_{\dot{W}^1_p}.$$
	\end{proof}

	\subsection{Proof of Theorem \ref{mtb}}
	
	Before proceeding with the proof of Theorem \ref{mtb}, let us recall the following lemma due to Birman and Solomyak \cite{BS1987}.
	
	\begin{lem}\label{bs limit lemma}
		Let $(A_n)_{n\geq0}\subset\mathcal{L}_{p,\infty}$ be such that
		\begin{enumerate}[{\rm (i)}]
			\item $A_n\to A$ in $\mathcal{L}_{p,\infty};$
			\item for every $n\geq0,$ there exists the limit
			$$\lim_{t\to\infty}t^{\frac{1}{p}}\mu(t,A_n)=c_n.$$
		\end{enumerate}
		Then the following limits exist and are equal,
		$$\lim_{t\to\infty}t^{\frac{1}{p}}\mu(t,A)=\lim_{n\to\infty}c_n.$$
	\end{lem}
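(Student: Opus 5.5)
The plan is a direct argument from the singular value inequality \eqref{singular-value-sum}, with a small splitting parameter $\delta>0$, followed by $n\to\infty$ and then $\delta\downarrow0$. Write
\[
 L^+=\limsup_{t\to\infty}t^{\frac1p}\mu(t,A),\qquad
 L^-=\liminf_{t\to\infty}t^{\frac1p}\mu(t,A),\qquad
 \varepsilon_n=\|A-A_n\|_{p,\infty}\to0.
\]
By definition of the quasi-norm, every $B\in\mathcal L_{p,\infty}$ satisfies $\mu(s,B)\leq(\lfloor s\rfloor+1)^{-\frac1p}\|B\|_{p,\infty}\leq s^{-\frac1p}\|B\|_{p,\infty}$ for $s>0$. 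In particular $c_n\leq\|A_n\|_{p,\infty}$, and the $c_n$ are bounded, since $\|A_n\|_{p,\infty}$ stays bounded because $A_n\to A$ in the quasi-norm.

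\emph{Upper bound.} Fix $\delta\in(0,1)$. Apply \eqref{singular-value-sum} with $A=A_n+(A-A_n)$:
\[
 \mu\bigl((1+\delta)t,A\bigr)\leq\mu(t,A_n)+\mu(\delta t,A-A_n)
 \leq\mu(t,A_n)+(\delta t)^{-\frac1p}\varepsilon_n.
\]
The floor conventions cause no problem, because $\mu(\cdot,\cdot)$ is nonincreasing and $\lfloor t\rfloor+\lfloor\delta t\rfloor\leq(1+\delta)t$. Multiply by $t^{\frac1p}$ and let $t\to\infty$. This gives
\[
 (1+\delta)^{-\frac1p}L^+\leq c_n+\delta^{-\frac1p}\varepsilon_n .
\]

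\emph{Lower bound.} Symmetrically, write $A_n=A+(A_n-A)$. Then
\[
 \mu\bigl((1+\delta)t,A_n\bigr)\leq\mu(t,A)+(\delta t)^{-\frac1p}\varepsilon_n,
\]
and multiplying by $t^{\frac1p}$ and taking the $\liminf$ in $t$ gives
\[
 (1+\delta)^{-\frac1p}c_n\leq L^-+\delta^{-\frac1p}\varepsilon_n .
\]
Here it is used that $t^{\frac1p}\mu((1+\delta)t,A_n)\to(1+\delta)^{-\frac1p}c_n$, which follows from hypothesis (ii).

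\emph{Conclusion.} Keep $\delta$ fixed and let $n\to\infty$. Taking $\liminf_n$ in the first inequality and $\limsup_n$ in the second gives
\[
 (1+\delta)^{-\frac1p}L^+\leq\liminf_n c_n,
 \qquad
 (1+\delta)^{-\frac1p}\limsup_n c_n\leq L^- .
\]
Letting $\delta\downarrow0$ yields
\[
 L^+\leq\liminf_n c_n\leq\limsup_n c_n\leq L^-\leq L^+ .
\]
So all these quantities coincide. Hence $\lim_n c_n$ exists, $\lim_{t\to\infty}t^{\frac1p}\mu(t,A)$ exists, and the two limits are equal.

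The only delicate point is the order of limits. The error term $\delta^{-\frac1p}\varepsilon_n$ blows up as $\delta\to0$, so one must first send $n\to\infty$ at fixed $\delta$ to kill it, and only then remove the dilation factor $(1+\delta)^{-\frac1p}$.
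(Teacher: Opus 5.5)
Your proof is correct and complete. The splitting $\mu((1+\delta)t,\cdot)\leq\mu(t,\cdot)+\mu(\delta t,\cdot)$, used in both directions, with $n\to\infty$ taken at fixed $\delta$ before $\delta\downarrow0$, gives $L^+\leq\liminf_n c_n\leq\limsup_n c_n\leq L^-$ exactly as you claim.

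The paper gives no proof of its own; it only cites Birman--Solomyak \cite{BS1987}. Their classical argument rests on the same splitting, but they optimise over $\delta$ instead of taking iterated limits. This gives $\Lambda(A+B)^{\frac{1}{p+1}}\leq\Lambda(A)^{\frac{1}{p+1}}+\Lambda(B)^{\frac{1}{p+1}}$ for $\Lambda(T)=\limsup_{t\to\infty}t\,\mu(t,T)^p$, together with the analogous bound for the $\liminf$. So your route is essentially the standard one.
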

	
	\begin{thm}\label{thm:asymp-smooth}
 Let $d\geq 2$ and let $\epsilon<1.$ For every $x\in\mathcal{S}(\qt),$ we have
		$$\lim_{t\to\infty}t^{\frac{1-\epsilon}{d}}\mu\Big(t,[I_\theta^{\epsilon},M_x]\Big)=|\epsilon|d^{\frac{\epsilon-1}{d}}\|\sum_{j=1}^dD_jx\otimes s_j\|_{L_{\frac{d}{1-\epsilon}}(L_{\infty}(\mathbb{T}_\theta^d)\bar{\otimes} L_\infty(\mathbb{S}^{d-1}))}.$$
	\end{thm}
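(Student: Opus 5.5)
The plan is to combine Theorem~\ref{approximation theorem} with Corollary~\ref{key-psd cor} and Lemma~\ref{weak weyl}. Fix $x\in\mathcal S(\qt)$ and $\epsilon<1$, and set $p=\frac{d}{1-\epsilon}$. Put
\[
T=\sum_{j=1}^dM_{D_jx}D_jI_\theta^{-1}.
\]
Theorem~\ref{approximation theorem} gives
\[
[I_\theta^\epsilon,M_x]-\epsilon TJ_\theta^{\epsilon-1}\in(\mathcal L_{p,\infty})_0 .
\]
Hence, by Lemma~\ref{weak weyl}, it suffices to show that $t^{\frac1p}\mu(t,\epsilon TJ_\theta^{\epsilon-1})$ converges to the right-hand side. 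Here we use $J_\theta^{\epsilon-1}\in\mathcal L_{p,\infty}$ and the boundedness of $T$ to see that $\epsilon TJ_\theta^{\epsilon-1}\in\mathcal L_{p,\infty}$.

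Next, check that $T\in\Pi(C(\qt),C(\mathbb S^{d-1}))$ and compute its symbol. Since $D_jx\in\mathcal S(\qt)\subset C(\qt)$, we have $M_{D_jx}=\pi_1(D_jx)$. As recalled in the proof of Proposition~\ref{prop:classical-in-Pi}, $D_jI_\theta^{-1}=\pi_2(s_j)$. Therefore $T=\sum_j\pi_1(D_jx)\pi_2(s_j)\in\Pi$, and
\[
\sym(T)=\sum_jD_jx\otimes s_j
\]
by the defining property of $\sym$.

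Then apply Corollary~\ref{key-psd cor} with the exponent $p=\frac{d}{1-\epsilon}$. This choice makes $J_\theta^{-\frac dp}=J_\theta^{\epsilon-1}$, and the corollary gives
\[
\lim_{t\to\infty}t^{\frac{1-\epsilon}{d}}\mu(t,TJ_\theta^{\epsilon-1})=d^{-\frac{1-\epsilon}{d}}\Big\|\sum_jD_jx\otimes s_j\Big\|_{L_{\frac{d}{1-\epsilon}}}.
\]
Multiplying by $|\epsilon|$, since $\mu(t,\epsilon S)=|\epsilon|\mu(t,S)$, yields exactly the constant $|\epsilon|d^{\frac{\epsilon-1}{d}}$ in the statement. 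The corollary is stated for every $p>0$, so no restriction on $\epsilon<1$ arises; in particular, the case $p<2$ (that is, $\epsilon<1-\frac d2$) is covered.

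The case $\epsilon=0$ is degenerate. There $I_\theta^0=1$, so the commutator vanishes and the right-hand side is zero, and the statement holds trivially. The remaining steps are immediate applications of earlier results. The only point needing care is that Theorem~\ref{approximation theorem} is stated for all $\epsilon<1$, including the negative range, and that its remainder lies in the separable part $(\mathcal L_{p,\infty})_0$ rather than merely in $\mathcal L_{p,\infty}$; this is what Lemma~\ref{weak weyl} requires. So the main work has already been done, and this proof is a direct assembly of those results.
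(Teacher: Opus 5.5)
Your proposal is correct and follows the paper's proof: set $T=\sum_j M_{D_jx}D_jI_\theta^{-1}$, identify $\sym(T)=\sum_j D_jx\otimes s_j$, apply Corollary~\ref{key-psd cor} with exponent $\frac{d}{1-\epsilon}$, and transfer the limit to the commutator via Theorem~\ref{approximation theorem} and Lemma~\ref{weak weyl}. Your extra remarks on $D_jI_\theta^{-1}=\pi_2(s_j)$ and the trivial case $\epsilon=0$ are also accurate.
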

	\begin{proof} Set
		$$T=\sum_{1\leq j \leq d }M_{D_j x }D_j I_\theta^{-1} .$$
		By Corollary \ref{key-psd cor},
		$$ \lim_{t\to\infty}t^{\frac{1-\epsilon}{d}}\mu\Big(t,T  J_\theta^{\epsilon-1} \Big)=d^{ \frac{\epsilon-1}{d}}    \|{\rm sym}(T)\|_{L_{\frac{d}{1-\epsilon}}(L_{\infty}(\mathbb{T}_\theta^d)\bar{\otimes} L_\infty(\mathbb{S}^{d-1}))}.$$
		Clearly,
		$${\rm sym}(T)=\sum_{j=1}^dD_jx\otimes s_j.$$
		Thus,
		$$\lim_{t\to\infty}t^{\frac{1-\epsilon}{d}}\mu\Big(t,T  J_\theta^{\epsilon-1} \Big)=d^{\frac{\epsilon-1}{d}}\|\sum_{j=1}^dD_jx\otimes s_j\|_{L_{\frac{d}{1-\epsilon}}(L_{\infty}(\mathbb{T}_\theta^d)\bar{\otimes} L_\infty(\mathbb{S}^{d-1}))}.$$
		The assertion follows from Lemma \ref{weak weyl} and Theorem \ref{approximation theorem}.
	\end{proof}

	We are going to conclude Theorem \ref{mtb} for arbitrary $x\in\dot{W}^{1}_{\frac{d}{1-\epsilon}}(\qt)$ using an approximation argument.
	
	\begin{proof}[Proof of Theorem \ref{mtb}] Thanks to \cite[Proposition~4.7]{XXY2018}, we may find a sequence $\{x_n\}_{n\geq 1}\subset\mathcal S(\qt)$ such that $x_n\to x$ in $\dot W^1_{\frac{d}{1-\epsilon}}(\qt)$. For these $\{x_n\}_{n\geq 1}  $, we already have
		$$\lim_{t\to\infty}t^{\frac{1-\epsilon}{d}}\mu\Big(t,[I_\theta^{\epsilon},M_{x_n}]\Big) =     |\epsilon| \, d^{ \frac{\epsilon-1}{d}}    \|  \sum_{1\leq j \leq d }D_j x_n\otimes s_j \|_{L_{\frac{d}{1-\epsilon}}(L_{\infty}(\mathbb{T}_\theta^d)\bar{\otimes} L_\infty(\mathbb{S}^{d-1}))}.$$
		Using Lemma \ref{mtc}, we obtain
		$$ \lim_{n \to \infty}   \|  \sum_{1\leq j \leq d }D_j x_n\otimes s_j \|_{L_{\frac{d}{1-\epsilon}}(L_{\infty}(\mathbb{T}_\theta^d)\bar{\otimes} L_\infty(\mathbb{S}^{d-1}))} =   \|  \sum_{1\leq j \leq d }D_j x\otimes s_j \|_{L_{\frac{d}{1-\epsilon}}(L_{\infty}(\mathbb{T}_\theta^d)\bar{\otimes} L_\infty(\mathbb{S}^{d-1}))}.$$
		On the other hand, since $x_n \to  x $ in $\dot{W}^{1}_{\frac{d}{1-\epsilon}}(\qt)$, it follows from Theorem \ref{mta} that
		$$[I_\theta^{\epsilon},M_{x_n}] \to [I_\theta^{\epsilon},M_{x }],\quad n\to\infty,$$
		in $\mathcal{L}_{\frac{d}{1-\epsilon},\infty}.$ Thus, by Lemma \ref{bs limit lemma}, we have
		$$ \lim_{n \to \infty} \lim_{t\to\infty}t^{\frac{1-\epsilon}{d}}\mu\Big(t,[I_\theta^{\epsilon},M_{x_n}]\Big)    = \lim_{t\to\infty}t^{\frac{1-\epsilon}{d}}\mu\Big(t,[I_\theta^{\epsilon},M_{x }]\Big) $$
		Combining the above equalities, we complete the proof.
	\end{proof}

	\subsection{Proof of Theorem \ref{mtd}}
	\begin{proof}[Proof of Theorem \ref{mtd}]

For $t\in\mathbb T^d$, let
$T_tU^n=e^{\ri t\cdot n}U^n$, $n\in\mathbb Z^d$.
The automorphism $T_t$ preserves $\tau$ and extends to a
unitary on $L_2(\qt)$.	

		If $\psi\in L_1(\mathbb{T}^d)$ and $x\in L_1(\qt),$	then we set
		$$\psi\ast x=\int_{\mathbb{T}^d}\psi(t)T_tx dt.$$
		Clearly,
		$$M_{T_tx}=T_tM_xT_{-t},\quad t\in\mathbb{T}^d.$$
		Thus,
		$$M_{\psi\ast x}=\int_{\mathbb{T}^d}\psi(t)T_tM_xT_{-t}dt$$
		and
		$$[I_\theta^{\epsilon},M_{\psi\ast x}]=\int_{\mathbb{T}^d}\psi(t)T_t[I_\theta^{\epsilon},M_x]T_{-t}dt.$$
		
				Let now $x\in L_1(\qt)$ be such that $[I_\theta^{\epsilon},M_x]\in\mathcal{L}_{\frac{d}{1-\epsilon},\infty}.$ Since $\frac{d}{1-\epsilon}>1,$ it follows that
	\begin{align*}
		\|[I_\theta^{\epsilon},M_{\psi\ast x}]\|_{\frac{d}{1-\epsilon},\infty}
		&\leq c_{\frac{d}{1-\epsilon}}\int_{\mathbb{T}^d}\|T_t[I_\theta^{\epsilon},M_x]T_{-t}\|_{\frac{d}{1-\epsilon},\infty}|\psi(t)|dt \\
		&=c_{\frac{d}{1-\epsilon}}\|[I_\theta^{\epsilon},M_x]\|_{\frac{d}{1-\epsilon},\infty}\|\psi\|_1.
	\end{align*}
		If $\psi\in C^{\infty}(\mathbb{T}^d),$ then $\psi\ast x\in\mathcal{S}(\qt).$ It follows that
		$$\limsup_{t\to\infty}t^{\frac{1-\epsilon}{d}}\mu\Big(t,[I_\theta^{\epsilon},M_{\psi\ast x}]\Big)\leq \|[I_\theta^{\epsilon},M_{\psi\ast x}]\|_{\frac{d}{1-\epsilon},\infty}\leq c_{\frac{d}{1-\epsilon}}\|[I_\theta^{\epsilon},M_x]\|_{\frac{d}{1-\epsilon},\infty}\|\psi\|_1.$$
		Lemma \ref{mtc} and Theorem \ref{thm:asymp-smooth} yield
		$$\|\psi\ast x\|_{\dot{W}_{\frac{d}{1-\epsilon}}^1}\leq c_{d,\epsilon}^{(1)}\|[I_\theta^{\epsilon},M_x]\|_{\frac{d}{1-\epsilon},\infty}\|\psi\|_1.$$
		
		Let $\psi_N$ be the $N$-th Fej\'er kernel (it is a trigonometric polynomial, hence a smooth function). It follows that
		$$\|\psi_N\ast x\|_{\dot{W}_{\frac{d}{1-\epsilon}}^1}\leq c_{d,\epsilon}^{(1)}\|[I_\theta^{\epsilon},M_x]\|_{\frac{d}{1-\epsilon},\infty}.$$
		In particular, for every $1\leq j\leq d,$
		$$\|D_j(\psi_N\ast x)\|_{\frac{d}{1-\epsilon}}\leq c_{d,\epsilon}^{(1)}\|[I_\theta^{\epsilon},M_x]\|_{\frac{d}{1-\epsilon},\infty}.$$

		Hence, for each $1\leq j\leq d,$ we obtain a bounded sequence $\{D_j(\psi_N\ast x)\}_{N\in \mathbb{N}}$ in $L_{ \frac{d}{1-\epsilon}}(\qt).$ Since $L_{ \frac{d}{1-\epsilon}}(\qt)$ is reflexive, we may assume that $D_j(\psi_N\ast x)$ converges weakly to some $y_j \in L_{\frac{d}{1-\epsilon}}(\qt).$ Thus, $D_j(\psi_N\ast x)\to y_j$ in the sense of distributions. On the other hand, $\psi_N\ast x\to x$ in $L_1(\qt)$ and, therefore, in the sense of distributions. Thus, $D_j(\psi_N\ast x)\to D_jx$ in the sense of distributions. By uniqueness of the limit, $y_j=D_jx$ for every $1\leq j\leq d.$ Since $y_j \in L_{\frac{d}{1-\epsilon}}(\qt)$ for every $1\leq j\leq d,$ it follows that $D_jx\in L_{\frac{d}{1-\epsilon}}(\qt)$ for every $1\leq j\leq d.$ Thus, $x\in\dot{W}^1_{\frac{d}{1-\epsilon}}(\qt).$
		
Passing to a common subsequence for $1\leq j\leq d$,
the weak convergence above and weak lower semicontinuity
give
\[
 \|x\|_{\dot W_{\frac{d}{1-\epsilon}}^1(\qt)}
 \leq
 \liminf_{N\to\infty}
 \|\psi_N*x\|_{\dot W_{\frac{d}{1-\epsilon}}^1(\qt)}
 \leq
 c_{d,\epsilon}^{(1)}
 \|[I_\theta^{\epsilon},M_x]\|_{\frac{d}{1-\epsilon},\infty}.
\]
This proves Theorem~\ref{mtd}.
	\end{proof}

Combining Theorem~\ref{mtd} with
Proposition~\ref{prop:HS}, we obtain the following
necessary conditions in the lower range.

\begin{cor}\label{cor:lower-range-necessary-conditions}
Let
\[
d\geq3,
\qquad
1-d<\epsilon\leq-\frac{d}{2},
\]
and let $x\in L_1(\qt)$. If
\[
 [I_\theta^\epsilon,M_x]\in\mathcal L_{\frac{d}{1-\epsilon},\infty},
\]
then $x\in\dot W_{\frac{d}{1-\epsilon}}^1(\qt)$ and
\[
 \begin{cases}
 x\in L_2(\qt),
     &\epsilon<-\frac{d}{2},\\[1mm]
 \displaystyle
 \sum_{k\ne0}
 \log(1+|k|)|\widehat x(k)|^2<\infty,
     &\epsilon=-\frac{d}{2}.
 \end{cases}
\]
\end{cor}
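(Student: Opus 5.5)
The corollary follows by chaining Theorem~\ref{mtd}, the embedding $\mathcal L_{\frac{d}{1-\epsilon},\infty}\subset\mathcal L_2$, and Proposition~\ref{prop:HS}.

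First, since $d\geq3$ and $1-d<\epsilon\leq-\frac d2$, the hypotheses of Theorem~\ref{mtd} hold ($\epsilon\neq0$ automatically). Its conclusion gives $x\in\dot W^1_{\frac{d}{1-\epsilon}}(\qt)$ directly.

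Second, I would check the exponent range. From $\epsilon\leq-\frac d2$ we get $1-\epsilon\geq1+\frac d2>\frac d2$, so $p:=\frac{d}{1-\epsilon}<2$. From $\epsilon>1-d$ we get $p>1$. For $p<2$ one has
\[
 \sum_k\mu(k,T)^2\leq\|T\|_{p,\infty}^2\sum_k(k+1)^{-\frac2p}<\infty .
\]
Hence $\mathcal L_{p,\infty}\subset\mathcal L_2$, and the bounded extension of $[I_\theta^\epsilon,M_x]$ is Hilbert--Schmidt.

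Third, I would apply Proposition~\ref{prop:HS}, which only requires $x\in L_1(\qt)$. If $\epsilon<-\frac d2$, it gives $x-\tau(x)\in L_2(\qt)$, hence $x\in L_2(\qt)$ because constants lie in $L_2$. If $\epsilon=-\frac d2$, it gives the logarithmically weighted Fourier condition.

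There is no genuine obstacle. The only point needing care is that membership in an ideal is understood in the sense of Definition~\ref{def-commutator}, as a bounded extension. The same extension is the one used in both Theorem~\ref{mtd} and Proposition~\ref{prop:HS}, since any extension is determined by the Fourier matrix \eqref{eq:matrix}.
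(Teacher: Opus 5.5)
Your proposal is correct and follows the paper's proof essentially verbatim. Like the paper, you apply Theorem~\ref{mtd} to get the Sobolev conclusion, use $\frac{d}{1-\epsilon}<2$ to obtain $\mathcal L_{\frac{d}{1-\epsilon},\infty}\subset\mathcal L_2$, and then invoke Proposition~\ref{prop:HS} for the remaining conditions.
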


\begin{proof}
For every $T\in\mathcal L_{\frac{d}{1-\epsilon},\infty}$,
\[
 \sum_{k\geq0}\mu(k,T)^2
 \leq
 \|T\|_{\frac{d}{1-\epsilon},\infty}^2
 \sum_{k\geq0}(k+1)^{-\frac{2(1-\epsilon)}{d}}
 <\infty,
\]
since $\frac{d}{1-\epsilon}<2$. Thus $\mathcal L_{\frac{d}{1-\epsilon},\infty}\subset\mathcal L_2$.
The Sobolev conclusion follows from Theorem~\ref{mtd}.
The remaining conclusions follow from Proposition~\ref{prop:HS}.
\end{proof}

\section*{Acknowledgements}
The research of Xinyu Zhang is supported by the National Natural Science Foundation
of China (Nos. 12371138 and W2441002). 
The author thanks Prof. Xiao Xiong for his guidance and helpful discussions.
He also thanks Prof. Fedor Sukochev and Prof. Dmitriy Zanin for their comments and suggestions.


\begin{thebibliography}{99}

		
 		\bibitem{Baaj1988} S. Baaj.
 		\newblock {Calcul pseudodiff\'erentiel et produits crois\'es de $C^*$-alg\`ebres, I and II}.
		\newblock {\em C. R. Acad. Sc. Paris, s\'er. I,} 307 (1988), 581-586 and 663-666.


		
		
		
		
		
		
 		\bibitem{BS1970} M. S. Birman, M. Z. Solomyak.
 		\newblock The principal term of the spectral asymptotics for ``nonsmooth"
 		elliptic problems.
 		\newblock {\em Funktsional. Anal. i Prilozhen.}, 4 (1970), 1-13.
		
		
		
 		\bibitem{BS1972} M. S. Birman, M. Z. Solomyak.
 		\newblock Spectral asymptotics of nonsmooth elliptic operators. I, II.
 		\newblock {\em Trudy Moskov Mat. Obsc}, 27 (1972), 3-52; 28 (1973), 3-34.
		
		
		
		
		\bibitem{BS1987} M. S. Birman, M. Z. Solomyak.
		\newblock {\em Spectral theory of selfadjoint operators in Hilbert space}.
		\newblock Mathematics and its Applications (Soviet Series). D. Reidel Publishing Co., Dordrecht, 1987.
		
		
		
		
		
		
		
		
		
		
 		\bibitem{Connes1980} A. Connes.
 		\newblock {\em $C^*$-alg\`ebres et g\'eom\'etrie diff\'erentielle}.
 		\newblock {\em C. R. Acad. Sc. Paris, s\'er. A,} 290 (1980), 599-604.
		
		
		
		\bibitem{Connes1988}A. Connes.
		\newblock The action functional in noncommutative geometry.
		\newblock {\em Comm. Math. Phys.}, 117 (1988), 673-683.
		
		
		
		
		
		
		
		
		
		\bibitem{CST1994} A. Connes, D. Sullivan and N. Teleman. \newblock Quasiconformal mappings, operators on Hilbert space, and local formulae for characteristic classes.
		\newblock {\em Topology}, 33 (1994), 663-681.
		
		\bibitem{CRW1976} R.~R. Coifman, R.~Rochberg and G.~Weiss.
\newblock Factorization theorems for Hardy spaces in several variables.
\newblock {\em Ann. of Math. (2)}, 103 (1976), 611-635.

		
		%
		
		
		
		
		
		
		
		
		
	
\bibitem{DPS2023} P.~G.~Dodds, B.~de Pagter and F.~A.~Sukochev.
\newblock {\em Noncommutative Integration and Operator Theory}.
\newblock Progress in Mathematics, vol.~349.
Birkh\"auser/Springer, Cham, 2023.



\bibitem{FSZ2023} R. L. Frank, F. Sukochev and D. Zanin.
		\newblock Asymptotics of singular values for quantum derivatives.
		\newblock {\em Trans. Amer. Math. Soc.}, 376 (2023), 2047-2088.	

		
		
		
		\bibitem{FSZ2024} R.~L. Frank, F.~Sukochev and D.~Zanin.
\newblock Endpoint Schatten class properties of commutators.
\newblock {\em Adv. Math.}, 450 (2024), 109738.
		
		
		
		
		
		
		
		
	

\bibitem{HLP2019I} H.~Ha, G.~Lee and R.~Ponge.
\newblock Pseudodifferential calculus on noncommutative tori, I.
Oscillating integrals.
\newblock {\em Internat. J. Math.}, 30 (2019), 1950033, 74 pp.

\bibitem{HLP2019II} H.~Ha, G.~Lee and R.~Ponge.
\newblock Pseudodifferential calculus on noncommutative tori, II.
Main properties.
\newblock {\em Internat. J. Math.}, 30 (2019), 1950034, 73 pp.	
		
		
		
		
		
		
 		\bibitem{Hor1965} L. H\"{o}rmander.
 		\newblock Pseudo-differential operators.
 		\newblock {\em Comm. Pure Appl. Math.} 18 (1965), 501–517.
		
		
		\bibitem{Janson1978} S.~Janson.
\newblock Mean oscillation and commutators of singular integral operators.
\newblock {\em Ark. Mat.}, 16 (1978), 263-270.

\bibitem{JP1988} S.~Janson and J.~Peetre.
\newblock Paracommutators---boundedness and Schatten--von Neumann properties.
\newblock {\em Trans. Amer. Math. Soc.}, 305 (1988), 467-504.

		\bibitem{JW1982} S. Janson and T.H. Wolff.
		\newblock Schatten classes and commutators of singular integral operators.
		\newblock {\em Ark. Mat.}, 20 (1982), 301-310.
		
		
 		\bibitem{KN1965}J. J. Kohn and L. Nirenberg.
 		\newblock An algebra of pseudo-differential operators.
 		\newblock {\em Comm. Pure Appl. Math.} 18 (1965), 269-305.
		
		
		
		
		
		
		
		
		
		\bibitem{LMSZ2017} S. Lord, E. McDonald, F. Sukochev and D. Zanin.
		\newblock {Quantum differentiability of essentially bounded functions on Euclidean space}.
		\newblock {\em J. Funct. Anal.}, 273 (2017), 2353-2387.
		
		\bibitem{LSZ2012} S. Lord, F. Sukochev and D. Zanin.
		\newblock {\em Singular traces: theory and applications}.
		\newblock Walter de Gruyter, Vol. 46, 2012.
		
		
		
		\bibitem{LSZ2021} S. Lord, F. Sukochev and D. Zanin.
		\newblock {\em Singular traces. Vol 1. Theory}.
		\newblock Walter de Gruyter, Vol. 46/1, 2021.
		
		
		
		
		
		
		
		
		
		
		
		

           \bibitem{MP2023} E.~McDonald and R.~Ponge.
           \newblock Dixmier trace formulas and negative eigenvalues of
Schr\"odinger operators on curved noncommutative tori.
           \newblock {\em Adv. Math.}, 412 (2023), 108815.
		
		
		
		\bibitem{MSX2019} E. McDonald, F. Sukochev and X. Xiong.
		\newblock Quantum differentiability on quantum tori.
		\newblock {\em Comm. Math. Phys.}, 371 (2019), 1231-1260.
		
		
		
		
		\bibitem{MSX2020} E. McDonald, F. Sukochev and X. Xiong.
		\newblock Quantum differentiability on noncommutative Euclidean spaces.
		\newblock {\em Comm. Math. Phys.}, 379 (2020), 491-542.
		
		
		
		
		
		\bibitem{MSZ2019} E. McDonald, F. Sukochev and D. Zanin.
		\newblock A $C^*$-algebraic approach to the principal symbol II.
		\newblock {\em Math. Ann.}, 374 (2019), 273-322.
		
		
		
		\bibitem{MSZ2022} E. McDonald, F. Sukochev and D. Zanin.
		\newblock A noncommutative Tauberian theorem and Weyl asymptotics in noncommutative geometry.
		\newblock {\em Lett. Math. Phys.}, 112:77, (2022), 45 pp.
		
		
		
		
		
		\bibitem{PSW2002}B.~de Pagter, F.~Sukochev and H.~Witvliet.
		\newblock Double operator integrals.
		\newblock {\em J. Funct. Anal.}, 192 (2002), 52-111.
		
		
		
		
		%
		%
		
		
		
		
		
		
		
		
		
		
		
		
		
		
		
		
		
		
		\bibitem{Ponge2023} R.~Ponge.
\newblock Connes' integration and Weyl's laws.
\newblock {\em J. Noncommut. Geom.}, 17 (2023), no. 2, 719-767.

		\bibitem{PS2009} D.~Potapov and F.~Sukochev.
		\newblock Unbounded Fredholm modules and double operator integrals.
		\newblock {\em J. Reine Angew. Math.}, 626 (2009) 159-185.
		
		
		
		
		\bibitem{PS2011} D.~Potapov and F.~Sukochev.
		\newblock Operator-Lipschitz functions in Schatten-von Neumann classes.
		\newblock {\em Acta Math.}, 207 (2011), 375-389.
		
		
		
		
		
		
		
		
		
		
		
		
		
		
		
		
		
		
		
		
		

		
		
           \bibitem{R2022}G. Rozenblum.
 		\newblock Eigenvalues of singular measures and Connes noncommutative integration.
		\newblock {\em J. Spectr. Theory}, 12 (2022), 259-300.
		
			
		\bibitem{Simon1979} B.~Simon.
		\newblock {\em Trace Ideals and Their Applications}.
		\newblock London Mathematical Society Lecture Note Series, vol.35, Cambridge University Press, Cambridge-New York, 1979.	

		
		
		
		%
		
		
		
		
		
		
 		\bibitem{Stein1993}E.~M.~Stein.
 		\newblock Harmonic analysis: real-variable methods, orthogonality, and oscillatory integrals.
 		\newblock Princeton Mathematical Series, 43. Monographs in Harmonic Analysis, III. Princeton University Press, Princeton, NJ, 1993.
		
		
		
		\bibitem{SZ2018} F.~Sukochev and D.~Zanin.
		\newblock A $C^*$-algebraic approach to the principal symbol I.
		\newblock  {\em J. Operator Theory}, 80 (2018), 481-522.
		
		

\bibitem{SZHolder2021} F.~Sukochev and D.~Zanin.
\newblock Optimal constants in non-commutative H\"older inequality
for quasi-norms.
\newblock {\em Proc. Amer. Math. Soc.}, 149 (2021), no.~9, 3813--3817.
\par


\bibitem{SXZ2023} F.~Sukochev, X. Xiong and D.~Zanin.
		\newblock Asymptotics of singular values for quantised derivatives on noncommutative tori.
		\newblock  {\em J. Funct. Anal.}, 285 (2023), 110021.
		
		
		
 \bibitem{Tao2018} J.~Tao.
 \newblock The theory of pseudo-differential operators on the noncommutative $n$-torus.
 \newblock {\em J. Phys. Conf. Ser.}, 965 (2018) 1-12.

           \bibitem{Treves1967} F.~Tr\`eves.
           \newblock {\em Topological Vector Spaces, Distributions and Kernels}.
           \newblock Academic Press, New York-London, 1967.

		
		
		\bibitem{Uchiyama1978} A.~Uchiyama.
\newblock On the compactness of operators of Hankel type.
\newblock {\em Tohoku Math. J. (2)}, 30 (1978), 163-171.

		\bibitem{XXY2018} X. Xiong, Q. Xu and Z. Yin.
		\newblock Sobolev, Besov and Triebel-Lizorkin spaces on quantum tori.
		\newblock {\em Mem. Amer. Math. Soc.} 252 (2018), no. 1203, vi+118 pp.
		
		
		
		\bibitem{Xu2007} Q. Xu.
		\newblock Noncommutative $L_p$-spaces and martingale inequalities.
		\newblock (unpublished manuscript), 2007.
		
		
		
		
		
		
		
		
		
\bibitem{SZ2023} D.~V. Zanin and F.~A. Sukochev.
\newblock Connes integration formula: a constructive approach.
\newblock {\em Funct. Anal. Appl.}, 57 (2023), 40--59.







		
	\end{thebibliography}
\end{document}